\newtheorem{theorem}{Theorem}
\newtheorem{prop}[theorem]{Proposition}
\newtheorem{lemma}[theorem]{Lemma}
\newtheorem{definition}[theorem]{Definition}
\theoremstyle{definition}
\numberwithin{equation}{section}
\numberwithin{theorem}{section}
\renewcommand{\u}[1]{\underline{#1}}
\newcommand{\x}{{\bf x}}
\DeclareMathOperator{\id}{id}
\DeclareMathOperator{\DPP}{{\mathbf{DPP}}}
\DeclareMathOperator{\ASM}{{\mathbf{ASM}}}
\DeclareMathOperator{\sgn}{sgn}
\newcommand{\nenwarrow}{\nwarrow \!\!\!\!\!\;\!\! \nearrow}
\newcommand{\si}[2]{\u{[#1,#2]}}
\newcommand{\e}{{\operatorname{E}}}
\newcommand{\W}{{\operatorname{W}}}
\newcommand{\wne}{\omega(\nearrow)}
\newcommand{\wnw}{\omega(\nwarrow)}
\newcommand{\wnenw}{\omega(\nenwarrow)}
\newcommand{\wnone}{\omega(\emptyset)}
\newcommand{\dec}{\operatorname{decor}}
\newcommand{\asym}{\mathbf{ASym}}
\newcommand{\nenwarrowij}[2]{{^{#1 \hspace{-1mm}} \nwarrow \!\!\!\!\!\;\!\! \nearrow^{#2}}}
\newcommand{\nenwarrowijs}[2]{{^{#1 \hspace{-0.5mm}} \nwarrow \!\!\!\!\!\;\!\! \nearrow^{#2}}}
\definecolor{dg}{rgb}{0.0, 0.2, 0.13}
\title[The ASM-DPP relation]{The relation between alternating sign matrices and descending plane partitions: $n+3$ pairs of equivalent statistics}
\author{Ilse Fischer}
\address{University of Vienna, Austria}
\urladdr{https://www.mat.univie.ac.at/~ifischer/}
\author{Florian Schreier-Aigner}
\address{University of Vienna, Austria}
\urladdr{https://homepage.univie.ac.at/florian.aigner/}
\thanks{Florian Schreier-Aigner acknowledges the financial support from the Austrian Science Foundation FWF, grant J 4387}
\begin{document}

\begin{abstract}
There is the same number of $n \times n$ alternating sign matrices (ASMs) as there is of descending plane partitions (DPPs) with parts no greater than $n$, but finding an explicit bijection is an open problem for about $40$ years now. So far, quadruples of statistics on ASMs and on DPPs that have the same joint distribution have been identified. We introduce extensions of ASMs and of DPPs along with $n+3$ statistics on each extension, and show that the two families of statistics have the same joint distribution. The ASM-DPP equinumerosity is obtained as an easy consequence by considering the $(-1)$-enumerations of these extended objects with respect to one pair of the $n+3$ pairs of statistics. One may speculate that the fact that these extensions might be necessary to have this significant increase in the number of statistics, as well as the involvement of signs when specializing to ASMs and DPPs may hint at the obstacles in finding an explicit bijection between ASMs and DPPs. One important tool for our proof is a multivariate generalization of the operator formula for the number of monotone triangles with prescribed bottom row that generalizes Schur functions.
\end{abstract}

\maketitle

\section{Introduction} 

An \emph{alternating sign matrix} (ASM) is a square matrix with entries in $\{0,\pm 1\}$ such that, in each row and each column,  $1$'s and $-1$'s alternate and sum to $1$. All $3 \times 3$ ASMs are given next.
$$
\tiny
\begin{pmatrix} 1 & 0 & 0 \\ 0 & 1 & 0 \\ 0 & 0 & 1 \end{pmatrix} \quad
\begin{pmatrix} 1 & 0 & 0 \\ 0 & 0 & 1 \\ 0 & 1 & 0 \end{pmatrix} \quad
\begin{pmatrix} 0 & 1 & 0 \\ 1 & 0 & 0 \\ 0 & 0 & 1 \end{pmatrix} \quad
\begin{pmatrix} 0 & 1 & 0 \\ 0 & 0 & 1 \\ 1 & 0 & 0 \end{pmatrix} \quad
\begin{pmatrix} 0 & 0 & 1 \\ 1 & 0 & 0 \\ 0 & 1 & 0 \end{pmatrix} \quad
\begin{pmatrix} 0 & 0 & 1 \\ 0 & 1 & 0 \\ 1 & 0 & 0 \end{pmatrix} \quad
\begin{pmatrix} 0 & 1 & 0 \\ 1 & -1 & 1 \\ 0 & 1 & 0 \end{pmatrix}
$$
ASMs had been introduced by Robbins and Rumsey in the 1980s \cite{lambda} in the course of generalizing the determinant to the so-called \emph{$\lambda$-determinant}, which turned out to be expressible as a sum over all ASMs of fixed size.
They conjectured that the number of $n \times n$ ASMs is 
\begin{equation}
\label{formula}  
\prod_{i=0}^{n-1} \frac{(3i+1)!}{(n+i)!},
\end{equation} 
which was proved after considerable effort by Zeilberger \cite{Zei96a} (in fact, he showed a more general result that includes an additional parameter). Soon after that Kuperberg used methods from statistical physics to give another, shorter proof \cite{Kup96} (of the special case). 

Two further observations caused considerable excitement in the combinatorics community: it turned out that \eqref{formula} appears also as the enumeration formula for two classes of plane partitions, namely for \emph{descending plane partitions} (DPPs) \cite{MilRobRum83} and for \emph{totally symmetric self-complementary plane partitions} (TSSCPPs) \cite{MilRobRum86}. Since then, there has been much effort to construct explicit bijections that prove these facts, but until recently without much progress. A beautiful account on the early story of this line of research is described in \cite{Bre99}. 

Very recently, a bijective proof of the fact that $n \times n$ ASMs are counted by \eqref{formula} has been constructed as well as a bijective proof of an identity that implies the equinumerosity of ASMs and DPPs, see \cite{partI,partII,cube}. More precisely, if $\ASM_n$ denotes the set of $n \times n$ ASMs and $\DPP_n$ denotes the set of DPPs with parts no greater than $n$, a bijection between $\ASM_n \times \DPP_{n-1}$ and $\ASM_{n-1} \times \DPP_n$ has been constructed (among other things).
These constructions are quite involved, and use signed sets and a generalization of the involution principle by Garsia and Milne  \cite{garsiamilne3,garsiamilne2}. It seems fair to say that a perfect understanding of these relations despite the many efforts is still missing. To complete the picture, it should also be mentioned that recently a new class of objects that are counted by \eqref{formula} has been discovered \cite{extreme}, namely \emph{alternating sign triangles} (ASTs), which are as indicated by the name definitely more on the ASM side. 
\bigskip

We now define DPPs and a variant needed here.
A \emph{strict partition} is a sequence $\lambda=(\lambda_1,\ldots,\lambda_n)$ of positive integers with $\lambda_1 > \ldots > \lambda_n$, and the \emph{shifted Young diagram} of shape $\lambda$ has $\lambda_i$ cells in row $i$ and each row is indented by one cell to the right with respect to the previous row. The shifted Young diagram of the strict partition $(5,4,2,1)$ is displayed next.
$$
\tiny
\ydiagram{0+5,1+4,2+2,3+1}
$$
A \emph{column strict shifted plane partition}  (CSSPP) is a filling of a shifted Young diagram with positive integers such that rows decrease weakly and columns decrease strictly. An example is given next. 
$$
\tiny
\begin{ytableau}
7 & 6 & 6 & 5 & 5 \\
\none & 5 & 5 & 4 &4  \\
\none & \none & 3 & 3 \\
\none & \none & \none & 2 
\end{ytableau}
$$
A CSSPP is said to be a \emph{descending plane partition} (DPP) if the first part of each row is greater than the length of its row and less than or equal to the length of the previous row. The example is obviously a DPP. The DPPs with all entries less than or equal to $3$ are given next.
$$\emptyset \quad  \tiny \begin{ytableau} 2 \end{ytableau} \quad  \begin{ytableau} 3 \end{ytableau} \quad \begin{ytableau} 3 &  1 \end{ytableau} \quad
\begin{ytableau} 3 & 2 \end{ytableau} \quad \begin{ytableau} 3 & 3 \end{ytableau} \quad \begin{ytableau} 3 & 3 \\ \none & 2 \end{ytableau}$$
It is no coincidence that the number of these objects is the same as the number of $3 \times 3$ ASMs as it is known that  the number of DPPs with parts no greater than $n$ is also given by \eqref{formula} for general $n$. This was shown by Andrews \cite{And79} prior to the introduction of ASMs.

We introduce other subclasses of CSSPPs. Let $k$ be an integer, then a CSSPP is said to be of class $k$ if the first part of each row exceeds the length of its row by precisely $k$. It can be seen that DPPs are equivalent to CSSPPs of class $2$. In order to transform a DPP into a CSSPP of class $2$ add $1$ to each part and add possibly parts equal to $1$ at the end of each row to guarantee that all conditions are satisfied in the end.\footnote{More precisely, letting $\pi_{i,i}$ be the first entry in row $i$ of the DPP and $\lambda_i$ the length of row $i$, we have $\lambda_i+1 \leq \pi_{i,i} \leq \lambda_{i-1}$ (setting $\lambda_{0}=\infty$). We add $1$ to each entry and append $\pi_{i,i}-\lambda_i-1$ $1$'s at the end of row $i$ for each $i$ so that the new length of row $i$ is just $\pi_{i,i}-1$, which is exceeded by 
 the new first entry in row $i$ by $2$. In order to see that the columns are strictly decreasing, it suffices to show that the top neighbors of the added $1$'s at the end of the rows are greater than $1$. Indeed, for $i\geq 2$, the new entries equal to $1$ in row $i$ appear until column $\pi_{i,i}-1+(i-1)$, while in row $i-1$, the entries are at least $2$ until column $\lambda_{i-1}+(i-2)$, and the assertion follows from the assumption $\pi_{i,i} \le \lambda_{i-1}$.}
In our example, this transformation results in the following CSSPP of class $2$.
$$
\tiny
\begin{ytableau}
8 & 7 & 7 & 6 & 6 & 1\\
\none & 6 & 6 & 5 &5  \\
\none & \none & 4 & 4 \\
\none & \none & \none & 3 
\end{ytableau}
$$ 
It should also be mentioned that CSSPPs of class $k$ are known to be in ``easy'' bijective correspondence with cyclically symmetric lozenge tilings of a hexagon with a central triangular hole of size $k$ \cite{Kra06}.

The lack of an explicit bijection between $n \times n$ ASMs and DPPs with parts no greater than $n$ is also astonishing in the view of the fact that Mills, Robbins and Rumsey had discovered triples of statistics for both objects for which they conjectured that they have the same joint distribution \cite{MilRobRum83}. This conjecture was proved in \cite{BehDifZin12}, and in \cite{BehDifZin13} another statistic was 
 added, which was the best possible result in this direction so far. 

\medskip

A main purpose of this paper is to shed light on the relation between ASMs and DPPs, in particular it may offer an  understanding why it might be hard to find the longed for explicit bijection in the current setting, and it provides a new, extended setting in which it could be (arguably) easier to find an explicit bijection. More concretely, we define extended (multivariate) weighted objects, both on the ASM side and on the DPP side, and show that the generating functions of these extended objects coincide. The equinumerosity of ASMs and DPPs is then a simple consequence of this result that is obtained by specializing the weights in a way such that objects are assigned weights $\pm 1$. Simple sign reversing involutions on both sides ``cancel'' certain subsets of the extended objects, and the remaining two sets are in easy bijective correspondence with the classical ASMs and DPPs, respectively, see the discussion after 
\eqref{specialization} and Proposition~\ref{prop: SBCSPP and DPP}.
The use of signed sets in the current work is in accordance with the recent (complicated) bijective proofs mentioned above. It is an open problem whether this extension to signed sets is needed to allow combinatorial proofs in this area or whether there is  (after so many years) a still to be discovered other approach that makes a sign-free combinatorial reasoning possible.

The major advantage of the extended objects is that we were able to make significant progress concerning the number 
of pairs of equivalent statistics we have identified on the two families of objects. More precisely, in this paper we are able to perform the step from a constant number of statistics (the four from  \cite{BehDifZin13}) to a linear number of statistics in the order $n$, namely $n+3$.
In a sense, a further step to the (right) quadratic number of statistics would resolve the problem of establishing a bijection as the two families of objects are (in a sense) assembled of  a quadratic number  of integers at most.\footnote{For instance, a monotone triangle of order $n$ is comprised of $1+2+\ldots+n=\frac{(n+1)n}{2}$ integers, which we could also view as statistics (note that the top entry of a monotone triangle is a classical statistics that as been studied, in particular an equivalent statistics on DPPs has been identified, see \cite{MilRobRum83, BehDifZin12}). 
If we were to find a set of $\frac{(n+1)n}{2}$ statistics on the DPP-side that has the same joint distribution, then the bijection would have been constructed.
{In the extended setting that we propose in this paper, ASMs are replaced by arrowed monotone triangles of order $n$, and they are comprised of $\frac{(n+1)n}{2}$ integers and $\frac{(n+1)n}{2}$ decorations and thus determined by $(n+1)n$ statistics.}}

\section{Main definitions and main results} 
\label{sectmain}
Recall that a \emph{monotone triangle} is a triangular array of integers of the following form 
$$
\begin{array}{ccccccccccccccccc}
  &   &   &   &   &   &   &   & m_{1,1} &   &   &   &   &   &   &   & \\
  &   &   &   &   &   &   & m_{2,1} &   & m_{2,2} &   &   &   &   &   &   & \\
  &   &   &   &   &   & \dots &   & \dots &   & \dots &   &   &   &   &   & \\
  &   &   &   &   & m_{n-2,1} &   & \dots &   & \dots &   & m_{n-2,n-2} &   &   &   &   & \\
  &   &   &   & m_{n-1,1} &   & m_{n-1,2} &  &   \dots &   & \dots   &  & m_{n-1,n-1}  &   &   &   & \\
  &   &   & m_{n,1} &   & m_{n,2} &   & m_{n,3} &   & \dots &   & \dots &   & m_{n,n} &   &   &
\end{array}
$$
with weak increase along 
northeast- and 
southeast-diagonals, and strict increase along rows, i.e., $m_{i+1,j} \le m_{i,j} \le m_{i+1,j+1}$ and 
$m_{i,j} < m_{i,j+1}$. Monotone triangles with bottom row $(1,2,\ldots,n)$ are in easy bijective correspondence with $n \times n$ ASMs, as explained briefly at the beginning of Section~\ref{damt}.
 We define certain 
 decorated monotone triangles.

\begin{definition} 
\label{ramt}
An \emph{arrowed monotone triangle (AMT)} is a monotone triangle where each entry $e$ carries a decoration from $\{\nwarrow, \nearrow, \nenwarrow \}$ and the following two conditions are satisfied: 
\begin{itemize} 
\item If $e$ has a 
northwest-neighbor and is equal to it, then $e$ must \emph{not} carry $\nwarrow,\nenwarrow$ 
(and therefore carries $\nearrow$).
\item If $e$ has a 
northeast-neighbor and is equal to it, then $e$ must \emph{not} carry $\nearrow,\nenwarrow$
(and therefore carries $\nwarrow$). 
\end{itemize}
In summary, the arrows simply indicate that the entries are different from the next entries in the specified directions.
\end{definition}

When assigning $\nenwarrow$ to an entry $a$ in our examples, we write ${^{\nwarrow}{a}^{\nearrow}}$. Such an example is given next.
$$
\begin{array}{ccccccccccccccccc}
  &   &   &   &   &   &   &   & 4^{\nearrow} &   &   &   &   &   &   &   & \\
  &   &   &   &   &   &   & ^{\nwarrow}2^{\nearrow} &   & ^{\nwarrow}5 &   &   &   &   &   &   & \\
  &   &   &   &   &   & ^{\nwarrow}2 &   & 3^{\nearrow} &   & {5}^{\nearrow} &   &   &   &   &   & \\
  &   &   &   &   & 1^{\nearrow} &   & ^{\nwarrow}3 &   & ^{\nwarrow}{4}^{\nearrow} &   & 6^{\nearrow} &   &   &   &   & \\
  &   &   &   & ^{\nwarrow}1 &   & 2^{\nearrow} &  &   3^{\nearrow} &   & ^{\nwarrow}5   &  & 6^{\nearrow}  &   &   &   & \\
  &   &   & ^{\nwarrow}1 &   & ^{\nwarrow}2 &   & ^{\nwarrow}3 &   & ^{\nwarrow}4^{\nearrow}  &   & 5^{\nearrow} &   & 6^{\nearrow} &   &   &
\end{array}
$$
We associate the following weight to an arrowed monotone triangle with $n$ rows, 
\begin{equation}
u^{\# \nearrow} v^{\# \nwarrow} w^{\# \nenwarrow} \prod_{i=1}^n X_i^{\text{(sum of entries in row $i$)}-\text{(sum of entries in row $i-1$)}+(\# \nearrow \text{in row $i$}) - (\# \nwarrow \text{in row $i$})}
\end{equation} 
where the sum of entries in row $0$ is defined to be $0$. As we will show in Proposition~\ref{downup}, the sum of the weights of all arrowed monotone triangles with a fixed underlying ordinary monotone triangle (i.e.,\ without arrows) is just the weight of that monotone triangle multiplied by $\prod_{i=1}^n (u X_i + v X_i^{-1} + w)$.

The exponents of the $n+3$ variables are the statistics from the title for the ASM-side. The weight of the arrowed monotone triangle given above is 
$$
u^{10} v^{8} w^3  X_1^{5} X_2^{2} X_3^{4} X_4^{5} X_5^{4} X_6^{3}. 
$$
Our first main result concerns the generating function of arrowed monotone triangles with prescribed bottom row. It is a multivariate generalization of the operator formula from \cite{Fis06} (with extensions given in \cite{Fis10}). It requires a (straightforward) extension of the Schur polynomial $s_{(k_n,k_{n-1},\ldots,k_1)}(X_1,\ldots,X_n)$ to integer sequences $k_1,\ldots,k_n$ that are not necessarily weakly increasing. This extension is provided in Section~\ref{ASM}.  

\begin{theorem} 
\label{main1}
The generating function of arrowed monotone triangles with bottom row $k_1 < \ldots < k_n$ is 
\begin{equation} 
\label{opX}
\prod_{i=1}^{n} (u X_i + v X_i^{-1} + w) \\ 
\prod_{1 \le p < q \le n} \left( u  \e_{k_p} + v \e_{k_q}^{-1} + w \e_{k_p} \e_{k_q}^{-1}  \right) \\
s_{(k_n,k_{n-1},\ldots,k_1)}(X_1,\ldots,X_n), 
\end{equation} 
where $\e_x$ denotes the shift operator, defined as $\e_x p(x) = p(x+1)$. 
\end{theorem}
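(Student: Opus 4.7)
My plan is to prove Theorem~\ref{main1} by induction on $n$, extending the operator formula of \cite{Fis06,Fis10} for the number of monotone triangles to track the three arrow statistics $u,v,w$ and the multivariate weight $X_1,\ldots,X_n$. Proposition~\ref{downup} reduces the claim to an operator-formula identity for a suitably defined sum over the underlying monotone triangles, after factoring out the prefactor $\prod_{i=1}^n(uX_i+vX_i^{-1}+w)$. The base case $n=1$ is immediate: the single-entry AMTs with bottom entry $k_1$ contribute $(uX_1+vX_1^{-1}+w)X_1^{k_1}$, matching the right-hand side since $s_{(k_1)}(X_1)=X_1^{k_1}$ and the operator product is empty. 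For the inductive step I decompose an AMT with bottom row $k_1<\ldots<k_n$ into its top $n-1$ rows (themselves an AMT with some bottom row $l_1<\ldots<l_{n-1}$ that interlaces with $k$ via $k_i\le l_i\le k_{i+1}$) and its arrow-decorated bottom row $k_1,\ldots,k_n$; each bottom-row entry $k_j$ contributes $uX_n$, $vX_n^{-1}$ or $w$ according to its arrow, subject to Definition~\ref{ramt} (the equalities $k_j=l_{j-1}$ and $k_j=l_j$ respectively force $\nearrow$ and $\nwarrow$). Combining this bottom-row contribution (including the factor $X_n^{\sum_j k_j-\sum_j l_j}$) with the inductive hypothesis applied to the top AMT produces a sum over interlacing sequences $l$ of a shift-operator expression acting on $s_{(l_{n-1},\ldots,l_1)}(X_1,\ldots,X_{n-1})$.

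The key step is to identify this sum with the branching rule $s_{(k_n,\ldots,k_1)}(X_1,\ldots,X_n)=\sum_l X_n^{\sum_j k_j-\sum_j l_j}\,s_{(l_{n-1},\ldots,l_1)}(X_1,\ldots,X_{n-1})$ (extended to arbitrary integer sequences via antisymmetry of the bialternant numerator) packaged with the new operator factors $\prod_{p<n}(u\e_{k_p}+v\e_{k_n}^{-1}+w\e_{k_p}\e_{k_n}^{-1})$. The three arrow decorations of each bottom-row entry should correspond bijectively to the three summands of the corresponding new operator factor (together with the three summands of the newly introduced row-$n$ prefactor $uX_n+vX_n^{-1}+w$), while the forced-arrow boundary cases $l_j\in\{k_j,k_{j+1}\}$ should cancel the out-of-range terms that the shift operators produce when acting on Schur polynomials indexed by non-strictly-decreasing sequences.

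The main obstacle will be the combinatorial bookkeeping of this telescoping. I expect to organize the sum over each $l_j$ as an Abel-type summation in which the interior range $k_j<l_j<k_{j+1}$ yields the free three-term expansion of the operator, while the boundary values $l_j\in\{k_j,k_{j+1}\}$ produce the forced-arrow weights. The compensating signs from the antisymmetric extension of $s_{(k_n,\ldots,k_1)}$ to non-strictly-decreasing index sequences must match these boundary contributions exactly, and the transition from shift operators in the $l$-variables (from the inductive hypothesis) to shift operators in the $k$-variables is where most of the technical effort will lie.
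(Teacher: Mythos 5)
Your strategy coincides with the paper's: Theorem~\ref{main1} is obtained there as the specialization $\wnone=0$ of the more general Theorem~\ref{robbins}, whose proof is exactly the induction on $n$ you describe --- delete the bottom row, encode the three possible decorations of each bottom-row entry $k_i$ as a three-term operator (shifting the limits of the sum over the penultimate row and contributing $uX_n$, $vX_n^{-1}$ or $w$), apply the inductive hypothesis, and reorganize. Your base case is right, your decomposition of the inductive step is right, and you have correctly located the difficulty: converting shift operators in the $l$-variables into shift operators in the $k$-variables and controlling the boundary terms against the signed extension of the Schur polynomial.

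The problem is that this located difficulty \emph{is} the proof, and your proposal only records the expectation that it works out. The paper's execution needs three ingredients your sketch does not supply. First, it replaces the bialternant by the Jacobi--Trudi form $s_{(l_{n-1},\ldots,l_1)}=\det_{i,j}\bigl(h_{l_i+i-j}(X_{n-j},\ldots,X_{n-1})\bigr)$, so that each sum over $l_i$ can be telescoped with the difference operator $\Delta_{l,X_n}p(l)=p(l)-X_n\,p(l-1)$, using $\Delta_{l,X_n}h_l(X_1,\ldots,X_n)=h_l(X_1,\ldots,X_{n-1})$. Second, the telescoping turns the $(n-1)$-fold sum into $2^{n-1}$ boundary terms, and all but $n$ of them must be shown to vanish; in the paper this is \emph{not} the term-by-term matching of ``forced arrows'' against ``out-of-range Schur polynomials'' that you anticipate, but a symmetry/antisymmetry argument --- the product of the bottom-row operators is symmetric in the adjacent variables $l_{i-1},l_i$ while the shifted Jacobi--Trudi determinant is antisymmetric in them, so the offending summands cancel in pairs. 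Third, the $n$ surviving terms must be recognized as the cofactor expansion of $\det_{i,j}\bigl(h_{k_i+i-j}(X_{n-j+1},\ldots,X_n)\bigr)$ along its first column, which is what finally reassembles the extended Schur polynomial with the new operator factors in front. Without these steps (or equivalents) the proposal is a statement of intent rather than a proof; the ``Abel-type summation'' and the sign bookkeeping for non-increasing index sequences are precisely where an unguided attempt would stall.
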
 
The formula has to be interpreted as follows: First treat the $k_i$'s in $s_{(k_n,k_{n-1},\ldots,k_1)}(X_1,\ldots,X_n)$ as indeterminates, and  apply $\prod_{1 \le p < q \le n} \left( u \e_{k_p} + v \e_{k_q}^{-1} + w \e_{k_p} \e_{k_q}^{-1} \right)$. Note that applying the double product means that we first expand it and sum over all monomials in the expansion, which we then apply to the Schur polynomial. Only then the $k_i$'s can be specialized to the actual values. We illustrate this with the help of the case $n=2$.  Namely, the generating function of monotone triangles with bottom row 
$k_1,k_2$ is 
\begin{multline*}
(u X_1 + v X_1^{-1} + w)(u X_2 + v X_2^{-1} + w) \left( u  \e_{k_1} + v \e_{k_2}^{-1} + w \e_{k_1} \e_{k_2}^{-1}  \right) s_{(k_2,k_{1})}(X_1,X_2) \\
= (u X_1 + v X_1^{-1} + w)(u X_2 + v X_2^{-1} + w) \\
\times 
\left(u s_{(k_2,k_1+1)}(X_1,X_2) + v s_{(k_2-1,k_1)}(X_1,X_2) + w s_{(k_2-1,k_{1}+1)}(X_1,X_2) \right).
\end{multline*}

We provide a variation of this result in Theorem~\ref{robbins0}, but also significant extensions in two different directions: in Theorem~\ref{robbins}, we deal with arbitrary integer sequences $k_1,\ldots,k_n$ (with the proofs given in  Section~\ref{constantterm}), while, in Theorem~\ref{robbinsgen}, we consider (in addition) much more general decorations.

It turns out that the generating function of arrowed monotone triangles with bottom row $k_1,\ldots,k_n$ gives the number of (plain) monotone triangles with bottom row $k_1,\ldots,k_n$ when setting in the former
\begin{equation}
\label{specialization}
u=v=1, w=-1, (X_1,\ldots,X_n)=(1,\ldots,1).
\end{equation} 
This follows from Theorem~\ref{robbins0} and its connection to Theorem~\ref{main1}, as discussed  in Section~\ref{implies}, however, we present an elementary argument here.
 An entry in a monotone triangle that is different from its northwest-neighbour and different from its northeast-neighbour is said to be free (if one or both of the neighbours do not exist, then they are also treated as being different from the entry). Such entries may be equipped with any of the three decorations to obtain an arrowed monotone triangle, for all other entries the decoration is prescribed as either $\nwarrow$ or $\nearrow$. The sum of all weights of arrowed monotone triangles that are obtained from a given (plain) monotone triangle by eligible decorations is, after setting $(X_1,\ldots,X_n)=(1,\ldots,1)$, equal to $(u+v+w)^{f} u^{a} v^{b}$, where $f$ is  the number of free entries, $a$ is the number of entries equal to their northwest-neighbours and $b$ is the number of entries equal to their northeast-neighbours. When setting also $u=v=1$ and $w=-1$, this sum simplifies to $1$.
\bigskip

Our second main result concerns the special case $(k_1,\ldots,k_n)=(1,2,\ldots,n)$ (which concerns the case of ASMs). More precisely, we define certain weighted plane partitions, whose generating function coincides with the 
one for arrowed monotone triangles in this special case. Excitingly, it can be seen combinatorially that the generating function of these plane partitions with parts no greater than $n$ is equal to the number of DPPs with parts no greater than $n$ when specializing as in \eqref{specialization}, see Proposition~\ref{prop: SBCSPP and DPP}.

For the definition of these plane partitions, almost self-conjugate shapes in the following sense are crucial.

\begin{definition} 
\label{balanced}
Let $\lambda=(a_1,\ldots,a_l|b_1,\ldots,b_l)$ be a partition in Frobenius notation, i.e., $a_i$ is the number of cells right of the diagonal cell $(i,i)$ in the same row, while $b_i$ 
is the number of cells below $(i,i)$ in the same column. We say that $\lambda$ is \emph{near-balanced} if, for all $i$, either $a_i=b_i$ or $a_i=b_i+1$. 
The weight is 
\begin{equation} 
\W(\lambda) = w^{l+\sum_{i=1}^l (b_i - a_i)}. 
\end{equation} 
Phrased differently, the exponent of $w$ is the number of diagonal cells that have a balanced hook.
\end{definition}

We list all near-balanced shapes with at most $3$ rows. 
\begin{multline*} 
\emptyset, \tiny   \begin{ytableau}  \empty \\ \end{ytableau}, 
\tiny \begin{ytableau} \empty &  \empty \end{ytableau}, 
\begin{ytableau} \empty &  \empty \\ \empty \\     \end{ytableau},
\begin{ytableau} \empty &  \empty & \empty  \\ \empty  \\   \end{ytableau},
\begin{ytableau} \empty &  \empty & \empty  \\ \empty   \\ \empty  \\ \end{ytableau},
\begin{ytableau} \empty &  \empty & \empty & \empty \\ \empty   \\ \empty  \\ \end{ytableau},
\begin{ytableau} \empty &  \empty \\ \empty & \empty \\     \end{ytableau},
\begin{ytableau} \empty &  \empty & \empty  \\ \empty & \empty  \\   \end{ytableau}, 
\begin{ytableau} \empty &  \empty & \empty  \\ \empty & \empty  \\ \empty  \\ \end{ytableau},
\begin{ytableau} \empty &  \empty & \empty & \empty \\ \empty & \empty   \\ \empty  \\ \end{ytableau}, 
\begin{ytableau} \empty &  \empty & \empty \\ \empty & \empty & \empty \\  \end{ytableau}, \\ \tiny
\begin{ytableau} \empty &  \empty & \empty \\ \empty & \empty & \empty \\ \empty \\  \end{ytableau},
\begin{ytableau} \empty &  \empty & \empty  & \empty \\ \empty & \empty & \empty \\ \empty \\  \end{ytableau},
\begin{ytableau} \empty &  \empty & \empty \\ \empty & \empty & \empty \\ \empty & \empty  \\  \end{ytableau},
\begin{ytableau} \empty &  \empty & \empty & \empty \\ \empty & \empty & \empty \\ \empty & \empty  \\ \end{ytableau},
\begin{ytableau} \empty &  \empty & \empty & \empty \\ \empty & \empty & \empty & \empty  \\ \empty & \empty  \\ \end{ytableau},
\begin{ytableau} \empty &  \empty & \empty \\ \empty & \empty & \empty \\ \empty & \empty & \empty  \\  \end{ytableau},
\begin{ytableau} \empty &  \empty & \empty & \empty \\ \empty & \empty & \empty \\ \empty & \empty & \empty  \\ \end{ytableau},
\begin{ytableau} \empty &  \empty & \empty & \empty \\ \empty & \empty & \empty & \empty  \\ \empty & \empty & \empty   \\ \end{ytableau}, 
\begin{ytableau} \empty &  \empty & \empty & \empty  \\ 
\empty & \empty & \empty & \empty \\ 
\empty & \empty & \empty & \empty \end{ytableau}
\end{multline*} 
Their weights are 
$$1,w, 1,w, 1, w, 1, w^2, w, w^2, w, 1, w, 1, w^2, w, 1, w^3, w^2, w,1.$$

Now we are in the position to define the new class of plane partitions.

\begin{definition} 
\label{PP}
A set-valued near-balanced column strict plane partition (SBCSPP) $D$ of shape $\lambda$ and order $n$ is a filling of a near-balanced  partition $\lambda$ with non-empty subsets of  $\{1,2,\ldots,n\}$ such that strictly above the diagonal the subsets are singletons, and 
\begin{enumerate} 
\item  rows decrease weakly in the sense that the maxima of the sets form a weakly decreasing sequence if read from left to right, and 
\item columns decrease strictly in the sense that for two adjacent cells in a column, all elements in the top cell are strictly greater than all elements in the bottom cell.
\end{enumerate}
The weight of $D$ is as follows
\begin{multline} 
\W(D) = \W(\lambda) \cdot u^{\# \text{of cells strictly above the main diagonal}}  
\cdot v^{\binom{n+1}{2} -\# \text{of entries on and below the main diagonal}} \\
\cdot w^{\# \text{of entries} - \# \text{of cells}} \cdot \prod_{i=1}^n X_i^{\# \text{ of $i$'s in D}} 
\end{multline} 
\end{definition}
Again, the exponents of the $n+3$ variables in the weight are the statistics from the title for the DPP-side.
An example of an SBCSPP is displayed next.
$$
\begin{ytableau}
 8 & 8 & 8 & 7 & 7 & 6 & 4 & 1 \\
 7 &  7  & 7 & 6 & 5 & 5  \\
  6 &   6 &  5  & 4 & 4 & 4  \\
 5 &  4 &  3 &  3,2  & 3 & 2 \\
 3 &  2 &  2,1 &  1 \\
 2 & 1 \\
 1
\end{ytableau}
$$
Letting $n=9$, its weight is 
$$
u^{16} v^{26} w^{3} X_1^{5} X_2^{5} X_3^{4} X_4^5 X_5^4 X_6^4 X_7^5 X_8^3.
$$
Before stating our second main result, which connects arrowed monotone triangles with bottom row 
$1,2,\ldots,n$ and SBCSPPs of order $n$, we elaborate on the connection between SBCSPPs and DPPs. 
(More details about the DPP-side are provided in Section~\ref{dpp}.)

\begin{prop}
\label{prop: SBCSPP and DPP}
The weighted sum over all SBCSPPs of order $n$ when setting $u=v=1, w=-1$ and $(X_1,\ldots,X_n)=(1,\ldots,1)$ is equal to the number of DPPs with parts no greater than $n$.
\end{prop}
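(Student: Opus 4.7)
At the specialization $u=v=1$, $w=-1$, $X_1=\cdots=X_n=1$ the weight of an SBCSPP $D$ of shape $\lambda$ collapses to $(-1)^{\epsilon(D)}$ with
\[
\epsilon(D) \;=\; \#\{i : a_i=b_i\} \;+\; \sum_{c\in\lambda}\bigl(|D(c)|-1\bigr).
\]
The first summand counts the diagonal cells of $\lambda$ with a balanced hook (this is the exponent in $\W(\lambda)$ evaluated at $w=-1$); the second counts the total excess of set-valued entries, and only cells on or below the diagonal contribute, since cells above the diagonal are singletons by definition. The plan is to produce a sign-reversing involution $\iota$ on the set of SBCSPPs of order $n$ whose fixed points are in explicit bijection with $\DPP_n$.

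I would design $\iota$ to exchange a balanced diagonal against an extra element of a set-valued cell. Concretely, scan $(\lambda,D)$ in a canonical order -- for instance, from the bottom diagonal index upwards, and within a fixed diagonal $i$ from the diagonal cell $(i,i)$ downwards along column $i$ -- and locate the first cell witnessing either (i) a balanced diagonal ($a_i=b_i$) or (ii) $|D(c)|\ge 2$. A local move then converts a defect of type (i) into one of type (ii): shorten row $i$ by one cell on the right (turning $a_i=b_i$ into $a_i=b_i+1$) and insert the displaced value as a second element of a suitable adjacent on-or-below-diagonal cell; the reverse move handles type (ii). Each such move flips $\epsilon$ by one, so $\iota$ is sign-reversing off its fixed points. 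One then has to verify that the move is well-defined (the resulting shape is still near-balanced, the row-weak, column-strict and above-diagonal-singleton axioms still hold, and all entries remain in $\{1,\ldots,n\}$), and that the scanning order is arranged so that $\iota^2=\id$.

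The fixed points of $\iota$ are precisely the SBCSPPs with $a_i=b_i+1$ for every $i$ and with every cell a singleton; each such object has sign $+1$. It remains to give a bijection between this fixed-point set $F$ and $\DPP_n$. The on-or-below-diagonal portion of a fixed point has columns of strictly decreasing lengths $a_1>a_2>\cdots>a_l$, and transposing together with the natural shift produces a shifted diagram with row lengths $(a_1,\ldots,a_l)$; the row-weak and column-strict conditions of the SBCSPP translate into the CSSPP conditions on the shifted shape. A direct check on the diagonal values identifies the shifted object as a CSSPP of class $2$, and inverting the transformation from DPPs to class-$2$ CSSPPs recalled in the introduction yields the desired DPP, whose parts are automatically at most $n$.

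The hardest step is finding the correct local move underlying $\iota$: one must simultaneously alter the near-balanced shape and redistribute entries so that the SBCSPP axioms and $\iota^2=\id$ both hold. Pinning down a canonical position at which the extra element is inserted or extracted -- one that makes the scanning rule re-identify the same diagonal index on the image, and that behaves correctly in degenerate cases (e.g.\ when the balanced diagonal is the last one, or when the relevant neighbouring cell is absent) -- is where I expect most of the genuinely combinatorial work to lie.
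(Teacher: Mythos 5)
Your reduction of the weight to $(-1)^{\epsilon(D)}$ is correct, but the involution you outline cannot work, for two independent reasons. First, there is a parity error at the heart of the plan: a move that \emph{converts} a type-(i) defect (a balanced diagonal) into a type-(ii) defect (an extra set element) changes $\epsilon$ by $-1+1=0$, so it preserves the sign rather than reversing it; a sign-reversing move must create or destroy a defect, not trade one kind for the other. (The geometric description is also backwards: deleting the rightmost cell of row $i$ turns $a_i=b_i$ into $a_i=b_i-1$, which is not even near-balanced; to reach $a_i=b_i+1$ one must \emph{add} a cell to the row.) Second, and decisively, the fixed-point set you propose --- all cells singletons and $a_i=b_i+1$ for every $i$ --- is far too large to be in bijection with $\DPP_n$. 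Already for $n=2$ it contains $8$ objects (the empty shape; the one-row shape of length $2$ filled with $(1,1)$, $(2,1)$ or $(2,2)$; the shape $(3,1)$ with first row $(2,1,1)$, $(2,2,1)$ or $(2,2,2)$ above a single $1$; and the shape $(3,3)$ filled with $2$'s over $1$'s), whereas there are only $2$ DPPs with parts at most $2$. So no sign-reversing involution can have exactly those fixed points, and the concluding bijection step of your plan has no chance of succeeding.

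The paper's cancellation is structured quite differently, and the difference is exactly what your plan is missing: it does not pair defective objects with each other, but cancels \emph{defect-free} objects against defective ones. A first involution toggles the unique minimal admissible element in a canonically chosen cell; crucially, this cancels an all-singleton SBCSPP against a set-valued one whenever \emph{some} cell could hold a larger set, so the survivors are forced to have the consecutive entries $d-1,d-2,\ldots,1$ below each diagonal entry $d$. A second involution then adds or removes a trailing cell containing a $1$ at the end of a suitable row; this simultaneously disposes of the remaining balanced diagonals and of all $1$'s strictly above the diagonal. Only the survivors of \emph{both} steps --- which satisfy the two extra conditions (consecutive entries below each diagonal entry, no $1$'s above the diagonal) that your fixed-point description omits --- are in bijection with DPPs, via deleting the cells below the diagonal, subtracting $1$ from every entry, and conjugating row-wise to reach class-$2$ CSSPPs.
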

\begin{proof}
In the above specialisation, all SBCSPPs have weight $\pm 1$. In the following, we define a sign-reversing involution on a subset of the SBCSPPs of order $n$ and then see that the remaining SBCSPPs are in easy bijection with DPPs with parts no greater than $n$. 

 We say that an SBCSPP is \emph{principal} if the sets in all cells are singletons. We associate with 
each SBCSPP a principal SBCSPP by just keeping the maximum of each set in the cells. Clearly, this always produces a principal SBCSPP and also each principal SBCSPP is assigned to itself. We define two sign-reversing involutions.
\begin{enumerate}[label=(\emph{\alph*})]
\item For a given principal SBCSPP that has at least one other SBCSPP assigned to it, 
we define a sign-reversing involution on the set of all SBCSPPs which have this principal SBCSPP assigned to them. Consider the leftmost column with a cell that can contain a set of size at least $2$, and if there is more than one such cell in this column, then consider the bottommost of these cells. Then this cell has a unique minimal integer $i$ that is allowed to be in this set. In case $i$ is in this cell, we remove it; otherwise we add it. Clearly this yields a sign-reversing involution.
\item Thus it suffices to consider principal SBCSPPs that have only one SBCSPP assigned to them. They are characterized as follows: for each diagonal entry $d$, the entries below in the same column are $d-1,d-2,\ldots,1$ (in particular, there are $d-1$ cells below that cell). 
Next, we define a sign-reversing involution on the subset of these SBCSPPs for which at least one of the following is satisfied: the SBCSPP contains a $1$ strictly above the diagonal or $a_i \not= b_i+1$ for an $i$ where $(a_1,\ldots,a_l|b_1,\ldots,b_l)$ is the shape of the SBCSPP in Frobenius notation.

 If $a_i= b_i+1$ for all $i$, choose the topmost row that contains a $1$, and remove the rightmost cell (which then has to contain a $1$). Otherwise choose the minimal $i$ with 
$a_i \not= b_i+1$. If the SBCSPP has a $1$ above row $i$, do as we did in the previous case. Otherwise add a $1$ at the end of row $i$.
\end{enumerate}

The SBCSPPs that do not cancel under the two sign-reversing involutions can be described as follows: (1) all cells contain a single element, (2) their shapes $(a_1,\ldots,a_l|b_1,\ldots,b_l)$ satisfy $a_i = b_i+1$ for all $i$, (3) weakly below the diagonal entries, we have consecutive integers ending with $1$ down columns, and (4) there are no $1$'s above the diagonal.  All such SBCSPPs have weight $1$ in our specialization and are referred to as the DPP-SBCPPs in the following.

Next we remove all cells strictly below the main diagonal (no information is lost), and we obtain a column strict \emph{shifted} plane partition, i.e., the shape is not left-justified, but each row is intended by one cell to the right with respect to the previous row. From $a_i = b_i+1$, it follows that the first part of each row is one less than the length. Since there is no $1$ in the plane partition, we may subtract $1$ from each entry and obtain a column strict shifted plane partition such that the first part of each row is two less than the length of its row. Such CSSPPs with parts no greater than $n-1$ are in easy bijective correspondence with CSSPPs of class $2$ with parts no greater than $n+1$, by conjugating the partition in each row. The latter are in easy bijective correspondence with DPPs with parts no greater than $n$, as shown in the introduction.
\end{proof}

Our second main result is the following.

\begin{theorem} 
\label{main2}
The generating function of arrowed monotone triangles with bottom row $1,2,\ldots,n$ is equal to the generating function of set-valued near-balanced column strict plane partitions with parts in $\{1,2,\ldots,n\}$.
\end{theorem}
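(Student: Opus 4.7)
The plan is to derive an explicit expression for each side of the claimed identity and then to match them. For the AMT side, I would specialize Theorem~\ref{main1} to the bottom row $(1,2,\ldots,n)$ and invoke the classical identity $s_{(n,n-1,\ldots,1)}(X_1,\ldots,X_n)=X_1\cdots X_n\prod_{1\le i<j\le n}(X_i+X_j)$ for the staircase Schur polynomial. This reduces the AMT generating function to
\[
\prod_{i=1}^n(uX_i+vX_i^{-1}+w)\cdot\left.\prod_{1\le p<q\le n}\bigl(u\e_{k_p}+v\e_{k_q}^{-1}+w\e_{k_p}\e_{k_q}^{-1}\bigr)s_{(k_n,\ldots,k_1)}(X_1,\ldots,X_n)\right|_{k_i=i}.
\]
Expanding the operator product yields a weighted sum of $3^{\binom{n}{2}}$ Schur polynomials at shifted indices, indexed by functions $c\colon\{(p,q):1\le p<q\le n\}\to\{u,v,w\}$ that assign a ``color'' to each pair.

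On the SBCSPP side, I would stratify the generating function by the underlying near-balanced shape $\lambda=(a_1,\ldots,a_l|b_1,\ldots,b_l)$. For fixed $\lambda$, the cells strictly above the diagonal carry singletons and contribute a Schur-type factor tied to the ``arm'' sub-shape, while the cells on and below the diagonal carry non-empty subsets and contribute a set-valued variant (each extra element in a cell paying an additional $w$) tied to the ``leg'' sub-shape. Applying Lindström--Gessel--Viennot to a suitable family of non-intersecting lattice paths encoding such SBCSPPs should express each shape contribution as a determinant. Combined with $\W(\lambda)=w^{l+\sum(b_i-a_i)}$ and summed over all near-balanced $\lambda$, this produces a closed-form expression that I expect to match the expanded AMT side term by term, with the ternary per-pair choice $\{u\e_{k_p},v\e_{k_q}^{-1},w\e_{k_p}\e_{k_q}^{-1}\}$ in the operator product corresponding to a ternary choice of arm/leg/balanced status for that pair in the SBCSPP shape.

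The main obstacle is making this matching explicit: one wants a bijection between colorings $c$ and data parameterizing SBCSPPs (shape plus filling) that equates monomial weights on both sides. Reconciling the fixed $\binom{n}{2}$-parameter indexing on the AMT side with the variable-size indexing (sum over shapes, sum over fillings) on the SBCSPP side is the heart of the proof; in particular, colors valued $w$ in $c$ should be what force cells to contribute to the diagonal of the near-balanced shape, accounting for the $w^{l+\sum(b_i-a_i)}$ factor. Once this dictionary is in place, verifying each term's equality should reduce to routine manipulations of Schur polynomials, aided by the LGV determinantal representation of the shape contributions and by the vanishing identities for Schur polynomials at coinciding indices (as illustrated by the $n=2$ check where $s_{(1,2)}(X_1,X_2)=0$ collapses the $w$-term).
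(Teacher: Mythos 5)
Your proposal is a plan rather than a proof, and the step you defer --- the explicit dictionary between the $3^{\binom{n}{2}}$ terms of the expanded operator product and the shape-stratified SBCSPP sum --- is precisely the entire content of the theorem, not a routine verification. There are two concrete reasons this matching cannot be carried out ``term by term'' in the way you describe. First, after expanding $\prod_{p<q}(u\e_{k_p}+v\e_{k_q}^{-1}+w\e_{k_p}\e_{k_q}^{-1})$ and specializing $k_i=i$, most of the resulting shifted index sequences are not partitions: the extended Schur polynomials carry signs or vanish (as your own $n=2$ example with $s_{(1,2)}=0$ already shows), so the AMT side is a heavily cancelling signed sum, while the SBCSPP side is a sum of monomials with nonnegative coefficients stratified by shape. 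A term-by-term bijection between a signed sum and a positive sum requires a cancellation mechanism that you do not supply. Second, the exponent of $w$ on the SBCSPP side has two independent sources ($\W(\lambda)=w^{l+\sum(b_i-a_i)}$ from the shape \emph{and} $w^{\#\text{entries}-\#\text{cells}}$ from the set-valued cells), so the proposed rule ``$w$-colored pairs force diagonal cells'' cannot account for the full $w$-degree. Also note that the staircase identity $s_{(n,\ldots,1)}=X_1\cdots X_n\prod_{i<j}(X_i+X_j)$ is unusable here, since the shift operators must act on $s_{(k_n,\ldots,k_1)}$ as a function of the $k_i$ before the specialization $k_i=i$.

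For comparison, the paper's proof avoids any term-by-term matching. It pulls the operator product inside an antisymmetrizer, which converts the shifts into multiplication and yields the single closed expression $\asym_{X_1,\ldots,X_n}\bigl[\prod_{1\le p\le q\le n}(uX_q+vX_p^{-1}+w)\prod_i X_i^{k_i+n-i}\bigr]/\prod_{i<j}(X_j-X_i)$; then a bialternant identity (Lemma~\ref{general}, proved by Cramer's rule) turns the antisymmetrization of $\prod_{i\le j}(X_j-Y_i)$ into the determinant $\det(X_i^j-Y_i^j)$; Lemma~\ref{dettodet} converts the bialternant into a Jacobi--Trudi-type determinant; a sequence of explicit matrix factorizations and an inversion of $B_n$ produce a determinant whose entries are generating functions of single lattice paths; and one application of the Lindstr\"om--Gessel--Viennot lemma identifies the whole determinant with the extended DPP paths of Section~\ref{dpp}, which are equivalent to SBCSPPs. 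If you want to salvage your approach, you would at minimum need to (i) specify the sign-reversing involution that cancels the non-partition terms of the operator expansion, and (ii) prove the determinantal formula for the fixed-shape SBCSPP generating function you assert via LGV --- at which point you would essentially be reconstructing the paper's determinant from the other side.
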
 
 
We illustrate the theorem with the help of the case $n=2$ here; its proof is presented in Section~\ref{proofmain2}. This case is in fact small enough such that many weights appear precisely once, except for $u v w X_1 X_2^2$ and $u v w X_1^2 X_2$ which appear twice, and, therefore, there are exactly four weight-preserving bijections between arrowed monotone triangles with bottom row $1,2$ and SBCSPPs with parts in ${1,2}$.

\begin{center}
\begin{tabular}{ccc}
\begin{tabular}{|c|c|c|}
\hline
AMT & W & SBCSPP \\ \hline
\begin{tikzpicture}
	\node at (0,0) {$1$};
	\node at (1.2,0) {$2$};
	\node at (.6,.5) {$1$};
	\node at (-.3,.15) {$\nwarrow$};
	\node at (.9,.15) {$\nwarrow$};
	\node at (.3,.65) {$\nwarrow$};
	\end{tikzpicture}
	& \raisebox{4mm}{$v^3$}
	& \raisebox{4mm}{$\emptyset$} \\ \hline

\begin{tikzpicture}
	\node at (0,0) {$1$};
	\node at (1.2,0) {$2$};
	\node at (.6,.5) {$1$};
	\node at (-.3,.15) {$\nwarrow$};
	\node at (.9,.15) {$\nwarrow$};
	\node at (.3,.65) {$\nwarrow$};
	\node at (.9,.65) {$\nearrow$};
	\end{tikzpicture}
	& \raisebox{4mm}{$v^2wX_1$}
	& \raisebox{5mm}{\footnotesize \begin{ytableau}  1   \end{ytableau}} \\ \hline

\begin{tikzpicture}
	\node at (0,0) {$1$};
	\node at (1.2,0) {$2$};
	\node at (.6,.5) {$1$};
	\node at (-.3,.15) {$\nwarrow$};
	\node at (.9,.15) {$\nwarrow$};
	\node at (.9,.65) {$\nearrow$};
	\end{tikzpicture}
	& \raisebox{4mm}{$uv^2X_1^2$}
	& \raisebox{5mm}{\footnotesize \begin{ytableau} 1&1   \\ \end{ytableau}} \\ \hline

\begin{tikzpicture}
	\node at (0,0) {$1$};
	\node at (1.2,0) {$2$};
	\node at (.6,.5) {$1$};
	\node at (-.3,.15) {$\nwarrow$};
	\node at (.9,.15) {$\nwarrow$};
	\node at (1.5,.15) {$\nearrow$};
	\node at (.3,.65) {$\nwarrow$};
	\end{tikzpicture}
	& \raisebox{4mm}{$v^2wX_2$}
	& \raisebox{5mm}{\footnotesize \begin{ytableau}  2  \\ \end{ytableau}} \\ \hline

\begin{tikzpicture}
	\node at (0,0) {$1$};
	\node at (1.2,0) {$2$};
	\node at (.6,.5) {$2$};
	\node at (-.3,.15) {$\nwarrow$};
	\node at (1.5,.15) {$\nearrow$};
	\node at (.3,.65) {$\nwarrow$};
	\end{tikzpicture}
	& \raisebox{4mm}{$u v^2 X_1X_2$}
	& \raisebox{5mm}{\footnotesize \begin{ytableau}  2&1    \\ \end{ytableau}} \\ \hline

\begin{tikzpicture}
	\node at (0,0) {$1$};
	\node at (1.2,0) {$2$};
	\node at (.6,.5) {$1$};
	\node at (-.3,.15) {$\nwarrow$};
	\node at (.9,.15) {$\nwarrow$};
	\node at (1.5,.15) {$\nearrow$};
	\node at (.3,.65) {$\nwarrow$};
	\node at (.9,.65) {$\nearrow$};
	\end{tikzpicture}
	& \raisebox{4mm}{$vw^2X_1X_2$}
	& \raisebox{5mm}{\footnotesize \begin{ytableau}  2,1  \\ \end{ytableau}} \\ \hline

\begin{tikzpicture}
	\node at (0,0) {$1$};
	\node at (1.2,0) {$2$};
	\node at (.6,.5) {$1$};
	\node at (-.3,.15) {$\nwarrow$};
	\node at (1.5,.15) {$\nearrow$};
	\node at (.3,.65) {$\nwarrow$};
	\end{tikzpicture}
	& \raisebox{4mm}{$uv^2X_2^2$}
	& \raisebox{5mm}{\footnotesize \begin{ytableau}  2& 2  \\ \end{ytableau}} \\ \hline

\begin{tikzpicture}
	\node at (0,0) {$1$};
	\node at (1.2,0) {$2$};
	\node at (.6,.5) {$2$};
	\node at (-.3,.15) {$\nwarrow$};
	\node at (1.5,.15) {$\nearrow$};
	\node at (.3,.65) {$\nwarrow$};
	\node at (.9,.65) {$\nearrow$};
	
	\begin{scope}[yshift=-1.4cm]
	\node at (0,0) {$1$};
	\node at (1.2,0) {$2$};
	\node at (.6,.5) {$1$};
	\node at (-.3,.15) {$\nwarrow$};
	\node at (.9,.15) {$\nwarrow$};
	\node at (1.5,.15) {$\nearrow$};
	\node at (.9,.65) {$\nearrow$};
	\end{scope}
	\end{tikzpicture}
	& \raisebox{12mm}{\color{red} $u v w  X_1^2 X_2$}
	& \begin{tikzpicture}
	\node at (0,0) {\footnotesize \begin{ytableau}  2 & 1 \\ 1   \\ \end{ytableau}};
	\node at (0,-1.3) {\footnotesize \begin{ytableau}  2,1 & 1   \\ \end{ytableau}};
	\end{tikzpicture} \\ \hline
\end{tabular}
&
\begin{tabular}{|c|c|c|}
\hline
AMT & W & SBCSPP \\ \hline
\begin{tikzpicture}
	\node at (0,0) {$1$};
	\node at (1.2,0) {$2$};
	\node at (.6,.5) {$2$};
	\node at (-.3,.15) {$\nwarrow$};
	\node at (.3,.15) {$\nearrow$};
	\node at (1.5,.15) {$\nearrow$};
	\node at (.3,.65) {$\nwarrow$};
	\begin{scope}[yshift=-1.4cm]
	\node at (0,0) {$1$};
	\node at (1.2,0) {$2$};
	\node at (.6,.5) {$1$};
	\node at (-.3,.15) {$\nwarrow$};
	\node at (1.5,.15) {$\nearrow$};
	\node at (.3,.65) {$\nwarrow$};
	\node at (.9,.65) {$\nearrow$};
	\end{scope}	
	\end{tikzpicture}
	& \raisebox{12mm}{\color{blue} $u v w X_1 X_2^2$}
	&  \begin{tikzpicture}
	\node at (0,0) {\footnotesize \begin{ytableau}  2 & 2 \\ 1   \\ \end{ytableau}};
	\node at (0,-1.3) {\footnotesize \begin{ytableau}  2,1 &2  \\ \end{ytableau}};
	\end{tikzpicture} \\ \hline

\begin{tikzpicture}
	\node at (0,0) {$1$};
	\node at (1.2,0) {$2$};
	\node at (.6,.5) {$2$};
	\node at (-.3,.15) {$\nwarrow$};
	\node at (1.5,.15) {$\nearrow$};
	\node at (.9,.65) {$\nearrow$};
	\end{tikzpicture}
	& \raisebox{4mm}{$u^2vX_1^3X_2$}
	& \raisebox{5mm}{\footnotesize \begin{ytableau}  2 & 1 & 1 \\ 1   \\ \end{ytableau}} \\ \hline

\begin{tikzpicture}
	\node at (0,0) {$1$};
	\node at (1.2,0) {$2$};
	\node at (.6,.5) {$2$};
	\node at (-.3,.15) {$\nwarrow$};
	\node at (.3,.15) {$\nearrow$};
	\node at (1.5,.15) {$\nearrow$};
	\node at (.3,.65) {$\nwarrow$};
	\node at (.9,.65) {$\nearrow$};
	\end{tikzpicture}
	& \raisebox{4mm}{$uw^2X_1^2X_2^2$}
	& \raisebox{5mm}{\footnotesize \begin{ytableau}  2 & 2 \\ 1 & 1   \\ \end{ytableau}} \\ \hline

\begin{tikzpicture}
	\node at (0,0) {$1$};
	\node at (1.2,0) {$2$};
	\node at (.6,.5) {$1$};
	\node at (-.3,.15) {$\nwarrow$};
	\node at (1.5,.15) {$\nearrow$};
	\node at (.9,.65) {$\nearrow$};
	\end{tikzpicture}
	& \raisebox{4mm}{$u^2vX_1^2X_2^2$}
	& \raisebox{5mm}{\footnotesize \begin{ytableau}  2 & 2 & 1 \\ 1   \\ \end{ytableau}} \\ \hline

\begin{tikzpicture}
	\node at (0,0) {$1$};
	\node at (1.2,0) {$2$};
	\node at (.6,.5) {$2$};
	\node at (-.3,.15) {$\nwarrow$};
	\node at (.3,.15) {$\nearrow$};
	\node at (1.5,.15) {$\nearrow$};
	\node at (.9,.65) {$\nearrow$};
	\end{tikzpicture}
	& \raisebox{4mm}{$u^2wX_1^3X_2^2$}
	& \raisebox{5mm}{\footnotesize \begin{ytableau}  2 & 2 & 1 \\ 1 &1   \\ \end{ytableau}} \\ \hline

\begin{tikzpicture}
	\node at (0,0) {$1$};
	\node at (1.2,0) {$2$};
	\node at (.6,.5) {$2$};
	\node at (.3,.15) {$\nearrow$};
	\node at (1.5,.15) {$\nearrow$};
	\node at (.3,.65) {$\nwarrow$};

	\end{tikzpicture}
	& \raisebox{4mm}{$u^2vX_1X_2^3$}
	& \raisebox{5mm}{\footnotesize \begin{ytableau}  2 & 2 & 2 \\ 1    \\ \end{ytableau}} \\ \hline

\begin{tikzpicture}
	\node at (0,0) {$1$};
	\node at (1.2,0) {$2$};
	\node at (.6,.5) {$2$};
	\node at (.3,.15) {$\nearrow$};
	\node at (1.5,.15) {$\nearrow$};
	\node at (.3,.65) {$\nwarrow$};
	\node at (.9,.65) {$\nearrow$};
	\end{tikzpicture}
	& \raisebox{4mm}{$u^2wX_1^2X_2^3$}
	& \raisebox{5mm}{\footnotesize \begin{ytableau}  2 & 2 & 2 \\ 1 &1    \\ \end{ytableau}} \\ \hline

\begin{tikzpicture}
	\node at (0,0) {$1$};
	\node at (1.2,0) {$2$};
	\node at (.6,.5) {$2$};
	\node at (.3,.15) {$\nearrow$};
	\node at (1.5,.15) {$\nearrow$};
	\node at (.9,.65) {$\nearrow$};
	\end{tikzpicture}
	& \raisebox{4mm}{$u^3X_1^3X_2^3$}
	& \raisebox{5mm}{\footnotesize \begin{ytableau}  2 & 2 & 2 \\ 1 & 1 & 1   \\ \end{ytableau}} \\ \hline
\end{tabular}
\end{tabular}
\end{center}
\bigskip

 One of the surprising observations we made in this work is the following: Suppose we are able to construct an explicit bijection between AMTs and SBCSPPs that also preserves the $n+3$ statistics. If, in addition, the bijection is (in a straightforward sense that is defined below) compatible with the sign-reversing involutions from the proof of Proposition~\ref{prop: SBCSPP and DPP}, then we would have  as an immediate corollary an explicit bijection between the original ASMs and DPPs, and such a bijection would not make use of the involution principle of Garsia and Milne as long as the bijection for the extended objects did not. However, we are able to argue that such a compatible bijection does not exist at all (or at least that yet another twist might be necessary in the notion of compatibility; see Proposition~\ref{prop: AMT - SBCSPP} for the precise statement), which might be an explanation for the obstacles we have encountered in the last $40$ years.

To see this, let us start with the case $n=2$, where things still work. The two DPP-SBCSPPs (see the proof of Proposition~\ref{prop: SBCSPP and DPP} for the definition) are 
$$
\emptyset \quad \text{and} \quad  \tiny \begin{ytableau}  2 & 2 & 2 \\ 1   \\ \end{ytableau}, 
$$
and the above four weight-preserving bijections coincide on them. Therefore each of these bijections induce the same bijection 
 between these DPP-SBCSPPs and (plain) monotone triangles with bottom row $(1,2)$, simply by ignoring the arrows in the corresponding arrowed monotone triangles. One might then hope that a bijection between monotone triangles and DPP-SBCSPPs, and therefore DPPs, could be induced by a weight-preserving bijection between AMTs and SBCSPPs in a similar way (i.e., by ignoring arrows) for larger $n$, however the following proposition shows that this is not possible.

\begin{prop}
\label{prop: AMT - SBCSPP}
Let $n \geq 3$. Then there exists no weight-preserving bijection between arrowed monotone triangles with bottom row $1,\ldots,n$ and SBCSPPs of order $n$, which induces a bijection between  monotone triangles with bottom row $1,\ldots,n$ and DPP-SBCSPPs by restricting to DPP-SBCSPPs and by ignoring the arrows in the corresponding arrowed monotone triangles.
\end{prop}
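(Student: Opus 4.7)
The plan is to derive a contradiction by tracking the $X_1$-exponent on both sides. Assume such a weight-preserving bijection $\varphi$ exists for some $n\geq 3$. Then for each monotone triangle $M$ with bottom row $1,\ldots,n$ there would be a unique AMT $A_M$ with underlying ordinary triangle $M$ such that $D_M := \varphi(A_M)$ is a DPP-SBCSPP, and the weights of $A_M$ and $D_M$ must agree.

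First I would pin down the decoration of the top entry of $A_M$. Every DPP-SBCSPP carries no $w$-factor since $a_i=b_i+1$ forces $\W(\lambda)=1$ and all cells are singletons, so $A_M$ must carry no $\nenwarrow$ decoration. The top entry $m_{1,1}$ has neither NW- nor NE-neighbour and is therefore free, so it must carry $\nwarrow$ or $\nearrow$, contributing $\pm 1$ to the $X_1$-exponent on top of the base exponent $m_{1,1}$. Consequently the $X_1$-exponent of $A_M$ lies in $\{m_{1,1}-1,\, m_{1,1}+1\}$.

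Next I would bound the $X_1$-exponent of an arbitrary DPP-SBCSPP from above: this exponent just counts the number of $1$s in the plane partition. A short case analysis shows that $a_i\geq 2$ is forced by the ``no $1$ above the diagonal'' condition (if $a_i=1$ then the cell to the right of the diagonal cell $(i,i)$ would need to be both $\geq 2$ and $\leq d_i=a_i=1$), so the Frobenius length satisfies $l \leq n-1$ since the $a_i$'s are distinct values in $\{2,\ldots,n\}$. Each of the $l$ columns of the Frobenius leg contains a unique $1$ at its bottom cell, and no $1$ appears above the diagonal, so the $X_1$-exponent equals $l\leq n-1$.

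Specialising to MTs with $m_{1,1}=n$, the weight identity forces the $X_1$-exponent of $A_M$, which lies in $\{n-1,\, n+1\}$, to match that of $D_M$, which is at most $n-1$; therefore it must equal $n-1$, which pins $l=n-1$ and hence the shape of $D_M$ to be $(n,n-1,\ldots,2 \mid n-1,n-2,\ldots,1)$. A short monotonicity argument then shows that the filling is unique: each cell $(r,c)$ with $r<c$ above the diagonal is pinched between the row upper bound $n+1-r$ and the column lower bound $n+1-r$ (the latter coming from strict decrease down to the diagonal cell $(c,c)=n+1-c$), so row $r$ must be constant equal to $n+1-r$. On the other hand, the number of MTs with $m_{1,1}=n$ equals the number of $n\times n$ ASMs with a $1$ at position $(1,n)$, which equals $A_{n-1}$ (delete the first row and last column to recover an $(n-1)\times(n-1)$ ASM); since $A_{n-1}\geq A_2=2$ for $n\geq 3$, at least two distinct MTs must be assigned to the same unique DPP-SBCSPP, contradicting bijectivity. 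I expect the main technical step to be the uniqueness argument for the filling, which however follows cleanly from the row/column monotonicity and the ``$\geq 2$'' lower bound.
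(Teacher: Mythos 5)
Your proof is correct and follows essentially the same route as the paper's: both bound the $X_1$-exponent of a DPP-SBCSPP by the Durfee-square length $l\le n-1$, identify the unique DPP-SBCSPP attaining $l=n-1$, and play this off against the at least two monotone triangles with top entry $n$, whose associated arrowed monotone triangles all have $X_1$-exponent at least $n-1$. The only cosmetic point is that for the last two columns of the extremal shape, which contain no diagonal cell $(c,c)$, the column lower bound must instead come from the bottom entry being strictly above the diagonal and hence at least $2$ (exactly as the paper argues), rather than from ``strict decrease down to the diagonal cell.''
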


\begin{proof}
The exponent of $X_1$ in the weight of an SBCSPP counts the occurrences of $1$'s. Since DPP-SBCSPPs can only have $1$'s weakly below the diagonal, and each column which has boxes weakly below the diagonal ends with a $1$, the exponent of $X_1$ is equal to the length $l$ of the Durfee square of the shape.

The shape of a DPP-SBCSPP is by definition $(a_1,\ldots,a_l|b_1,\ldots,b_l)$ in Frobenius notation with $a_i=b_i+1$ for all $1 \leq i \leq l$. We show $b_l \geq 1$. Assume $b_l=0$, then the bottom entry of the $l$-th column is on the diagonal, and, therefore, it has to be $1$. Since $a_l=b_l+1=1$, there is another box to the right of this $1$ and, by the weak decrease along rows, it has to be filled with $1$ as well. However, this contradicts that there are no $1$'s strictly above the diagonal.

The length of the first column of a DPP-SBCSPP is at most $n$, since the entries have to be strictly decreasing. Hence, the maximal value for $l$ is $n-1$ (using also the fact that $b_l \ge 1$) and it is only achieved by the shape $(n,n-1,\ldots,2|n-1,\ldots,1)$. The first $n-1$ columns have length $n$, and, therefore, these columns contain precisely the entries $n,n-1,\ldots,1$, in this order.
The last two column have length $n-1$. Since their last entries are above the diagonal, they must be at least $2$ and both columns contain the entries $n,n-1,\ldots,2$, in this order. Therefore there exists only one DPP-SBCSPP with $l=n-1$. Recall that this implies that its exponent of $X_1$ in the weight is $n-1$ and there exists no other DPP-SBCSPP with a greater exponent of $X_1$.

On the other hand, for all (plain) monotone triangles with $n$ as top entry, the exponents of $X_1$ in the weights of the associated arrowed monotone triangles are at least $n-1$. The number of monotone triangles with top entry $n$ is equal to the number of monotone triangles with bottom row $1,\ldots,n-1$, since the top entry $n$ forces all entries in the rightmost 
northwest-diagonal to be equal to $n$. For $n \geq 3$, there are at least $2$ of such monotone triangles. Hence not all of them can be reached through a weight-preserving bijection.
\end{proof}

Let us remark that in the case $n=3$, there are actually $71$ weights that appear precisely once (therefore it is clear what the hoped-for explicit weight-preserving bijection has to be for the 
corresponding arrowed monotone triangles and SBCSPPs), $31$ weights appear twice,  $6$ appear three times, $14$ appear four times, $6$ five times and $1$ six times.

\subsection*{Some guiding principles in our course of discovery}

We explain how we were naturally led to the extensions and the $n+3$ statistics because we believe that many more results can be established that way. In bijective combinatorics, new interesting objects and statistics are often discovered through experiments. This ``approach'' is certainly responsible for many unforeseeable and  thus very exciting developments in this field, which is particularly true for results involving alternating sign matrices, plane partitions and their relatives, and must not be underestimated. However, the approach that led to the results in this paper is different and in a sense more systematic (which should not be misinterpreted as if we thought that it is therefore also ``better''). Two ideas were the guiding principles:
\begin{enumerate}
\item In the past, to prove enumeration results on ASMs, it was often possible to deduce complicated, multivariate expressions that somehow ``included'' the enumerative quantities we are interested in, such as for instance in Zeilberger's first proof of the ASM theorem \cite{Zei96a} where he showed that the constant terms of certain multivariate rational functions are equal to the numbers of objects he was interested it. (To extract the constant terms from these expressions was then often another challenge, but this is not our point here.) It seems natural to think that these expressions include much more information on the objects than ``just'' their numbers, and this is what we discovered in this paper for a certain type of expression. (For expressions coming from the six-vertex model approach, it is evident from their definition that they are multivariate generating functions of ASMs, with certain unorthodox weights.) More concretely, we show that for a specific polynomial $r_n(Y_1,\ldots,Y_n)$ that has appeared in previous work as one whose constant term is the number of $n \times n$ ASMs, one can interpret $r_n(X_1-1,\ldots,X_n-1)$ as a generating function of $n \times n$ ASMs $A$ with weight $\omega_A(X_1,\ldots,X_n)$ for some weight function $\omega$ with $\omega_A(1,\ldots,1)=1$.\footnote{The polynomial $r_n(X_1-1,\ldots,X_n-1)$ is up to simple multiplicative factors a specialization of \eqref{opX} and $r_n$ has appeared in \cite[Proposition 10.1]{FR15}.}
From this, we gain ``control'' over a linear number $n$ of statistics (roughly speaking, the exponents of the $X_i$). Since other multivariate polynomials or rational functions that have been identified as ones whose constant terms give ASM numbers (including refined ones) in previous work (e.g., in \cite{Zei96a}) were constructed in a similar, recursive way as $r_n(Y_1,\ldots,Y_n)$, it is plausible that something analogous could work for them.

\item  The second guiding idea is the following. Enumerative results on ASMs that were deduced from these constant term expressions also include the equinumerosity with DPPs, and there exist proofs of such results that directly connect (by calculation) ``ASM quantities'' to ``DPP quantities''.\footnote{Such proofs can be found in \cite{Fis07,Fis16,partII}. They are mostly formulated in terms of the ``operator formula'' from \cite{Fis06}. However, the operator formula has an equivalent constant term formulation, see, e.g., Corollary 3.1 in \cite{Fis16}.} We have been able to find such a (new) connecting calculation that allowed us to extend to the multivariate setting in Section~\ref{proofmain2}, in particular we view Lemma~\ref{general} as an important step as it provides a certain bialternant.
 The extending objects on the DPP side were discovered by some kind of ``reverse engineering'', i.e., through interpreting the multivariate expression we have obtained after transforming the multivariate ``ASM expression'' into a  multivariate ``DPP expression". Here, Lemma~\ref{dettodet} was helpful to transform the bialternant into a Jacobi-Trudi 
 type determinant, which could then be interpreted combinatorially using the Lindstr\"om-Gessel-Viennot lemma \cite{Lin73,GesVie85,GesVie89}. In fact, this last step has been quite an exciting purely combinatorial journey.
\end{enumerate}
We emphasize another moral of our story (which we think has several), as it might be of use in circumstances in bijective combinatorics when there seems to be an obstacle that needs to be overcome. On the ASM side, it was necessary to (somewhat) ``decompose'' an individual object into several (new) objects, that is, each ASM is assigned a set of the new, extending objects, such that, up to a multiplicative factor, the weight of an ASM is the sum of the weights of the objects in the set assigned to it. We were naturally led to these new objects and sets (this is another nice combinatorial journey sketched below), because the weights of the new objects are monomials in the variables (which is not the case for ASMs themselves) and thus allows us to directly interpret the exponents of the variables as statistics. This is  somewhat connected to the concept of randomized bijections (as it appears, e.g., in \cite{AyyerFischer20}), which extends the concept of bijective proofs.

\bigskip

\subsection*{Outline}

The rest of the paper is organized as follows. In Section~\ref{ASM}, we elaborate more on the extension on the ASM-side: we will see how one is naturally led to arrowed monotone triangles and how to extend them naturally to arbitrary integer sequences $(k_1,\ldots,k_n)$ at the cost of having to work with signed sets. In Section~\ref{dpp}, we provide several objects that are equivalent to SBCSPPs, in particular certain families of non-intersecting lattice paths that will be crucial for our proof of Theorem~\ref{main2}. We will also see how these paths naturally extend the classical families of non-intersecting lattice that are known to be in (easy) bijective correspondence \cite{Lal03,Kra06}
 with DPPs (which is where the above sketched sign-reversing involution for SBCSPP comes from when specializing as in \eqref{specialization}). In Section~\ref{constantterm}, we provide a proof of a generalization of Theorem~\ref{main1} (our first main result), while in Section~\ref{decorated} we explain how to obtain a much more general version that deals with more general decorations. In Section~\ref{proofmain2}, we provide the proof of Theorem~\ref{main2} (our second main result). This proof relies heavily on Theorem~\ref{main1}. More precisely, 
 we start with the generating function \eqref{opX} from that theorem and apply a number of manipulations before we are able to use the Lindstr\"om-Gessel-Viennot lemma to interpret the expression combinatorially as some families of non-intersecting lattice paths for which we have seen in Section~\ref{dpp} that they correspond to SBCSPPs. An important manipulation is the one in Lemma~\ref{general} that essentially transforms \eqref{opX} into a bialternant and then a standard lemma (Lemma~\ref{dettodet}) can be used to obtain a Jacobi-Trudi type expression. After that a number of further manipulations (mostly matrix multiplications) are necessary to obtain the correct determinant.

\section{Extension on the ASM-side} 
\label{ASM}

In this section, we elaborate on the various ``ASM-objects''. We explain the genesis of arrowed monotone triangles which is through a certain ``non-monomial'' multivariate weight on (plain) monotone triangles. This weight is natural in the sense that it has its origin in the recursion that ``underlies'' monotone triangles  and the operator formula \eqref{op} from earlier work \cite{Fis06}. Roughly speaking, arrowed monotone triangles are introduced to decompose the non-monomial weights into monomial weights so that the exponents of the variables can then be interpreted as statistics. A variant of arrowed monotone triangles are down-arrowed monotone triangles. They are useful in our argument to connect the generating function of monotone triangles to the generating function of arrowed monotone triangles in Proposition~\ref{downup}. Another advantage of arrowed monotone triangles is that we can generalize them to arbitrary integer valued bottom rows (not necessarily increasing). A second generalization goes into the direction that we allow entries with no decoration, see Section~\ref{eamt}, and, in particular, Theorem~\ref{robbins}.
 A further generalization concerning the decorations is provided much later in the paper in Section~\ref{decorated}.
The next table gives an overview of the various related ASM-objects.

\medskip

\begin{center}
\begin{tabular}{|c|c|c|c|} \hline 
objects & weight & enumeration & definition\\
\hline
monotone triangles (MTs) & non-monomial & Theorem~\ref{robbins0} & Section~\ref{sectmain}, Eq.~\ref{MTweight}\\
down-arrowed MTs & monomial & Theorem~\ref{robbins0} & Section~\ref{damt} \\
arrowed MTs & monomial & Theorem~\ref{main1} & Definition~\ref{ramt}\\
extended arrowed MTs  & monomial & Theorem~\ref{robbins} & Section~\ref{eamt} \\ \hline
\end{tabular}
\end{center}

\subsection{A multivariate weight on monotone triangles} 
\label{damt}
Consider for an $n \times n$ ASM $A=(a_{i,j})_{1 \le i,j \le n}$ the matrix obtained by adding to each entry all the entries that are in the same column above, i.e., $S=(\sum\limits_{i'=1}^{i} a_{i' j})_{1 \le i, j \le n}$. It is not hard to see that this always results in a $\{0,1\}$-matrix with $i$ occurrences of $1$'s in the $i$-th row. The corresponding {monotone triangle} is obtained by recording row by row the columns of the $1$'s in $S$ in $n$ centered rows. An example is given next. 
$$   \left( \begin{tabular}{rrrrr}
0 & 1 & 0 & 0 & 0 \\
1& -1 & 0 & 1 & 0 \\
0 & 1 & 0 & -1 & 1 \\
0 & 0 & 1 & 0 & 0 \\
0 & 0 & 0 &  1 & 0
\end{tabular} \right)  \quad \Rightarrow \quad 
 \left( \begin{tabular}{rrrrr}
0 & 1 & 0 & 0 & 0 \\
1& 0 & 0 & 1 & 0 \\
1 & 1 & 0 & 0 & 1 \\
1 & 1 & 1 & 0 & 1 \\
1 & 1 & 1 &  1 & 1
\end{tabular} \right) \quad \Rightarrow \quad 
\begin{array}{ccccccccc}
 & & & & 2 & & & &  \\
 & & & 1 & & 4 & & &  \\
 & & 1 & & 2 & & 5 & &  \\
 & 1 & & 2 & & 3 & & 5 &  \\
 1 & & 2 & & 3 & & 4 & & 5
\end{array}
$$
This procedure establishes a bijection between $n \times n$ ASMs and monotone triangles with bottom row $1,2,\ldots,n$. 
It was shown in  \cite{Fis06} that the number of monotone triangles with bottom row $k_1,k_2,\ldots,k_n$ is\footnote{In \cite{Fis06}, the formula appears as 
$\prod_{1 \le p < q \le n} \left(  \id + \e_{k_p} \Delta_{k_q} \right) \prod_{1 \le i < j \le n } \frac{k_j-k_i}{j-i}$ with $\Delta_x=\e_x - \id$, but 
rewriting $\id + \e_{k_p} \Delta_{k_q} = \e_{k_q} (\e_{k_p} + \e_{k_q}^{-1} - \e_{k_p} \e_{k_q}^{-1})$ and noting that 
$\prod_{1 \le p < q \le n} \e_{k_q} \prod_{1 \le i < j \le n } \frac{k_j-k_i}{j-i} = \prod_{1 \le i < j \le n } \frac{k_j-k_i+j-i}{j-i}$ shows that the two formulas are equivalent.}
\begin{equation} 
\label{op} 
\prod_{1 \le p < q \le n} \left(  \e_{k_p} +  \e_{k_q}^{-1} -  \e_{k_p} \e_{k_q}^{-1} \right) \prod_{1 \le i < j \le n } \frac{k_j-k_i+j-i}{j-i}.
\end{equation}

A crucial fact for what follows is that $\prod_{1 \le i < j \le n } \frac{k_j-k_i+j-i}{j-i}$ is the number of semistandard tableaux of shape $(k_n,\ldots,k_1)$ and entries at most $n$. Note that such semistandard tableaux  are in easy bijective correspondence with Gelfand-Tsetlin patterns with bottom row $k_1,\ldots,k_n$, which are defined just as monotone triangles with the exception that the condition on the strict increase along rows is dropped (see \cite[p.\ 313]{Sta99}).
This number is the specialization of the Schur polynomial $s_{(k_n,k_{n-1},\ldots,k_1)}(X_1,\ldots,X_n)$ at $(X_1,\ldots,X_n)=(1,\ldots,1)$ (see \cite[p. 375, Eq (7.105)]{Sta99}) and the Schur polynomial can be interpreted as a multivariate generating function of Gelfand-Tsetlin patterns (or, equivalently, of semistandard tableaux, see \cite[Sect.\ 7.10]{Sta99}).
Hence, this brings us to the obvious question whether we can also ``introduce'' the variables $X_1,\ldots,X_n$ in the operator formula, by replacing $\prod_{1 \le i < j \le n } \frac{k_j-k_i+j-i}{j-i}$ in the operator formula with $s_{(k_n,k_{n-1},\ldots,k_1)}(X_1,\ldots,X_n)$ and possibly modifying the operator polynomial. Our first main result will show that this is indeed possible. In order to formulate it, we need a number of statistics.

Each entry $m_{i,j}$ of a monotone triangle $M=(m_{i,j})_{1 \le j \le i \le n}$  that is not in the bottom row is in precisely one of the following categories.
\begin{itemize} 
\item $m_{i+1,i} < m_{i,j} < m_{i+1,j+1}$: in this case, $m_{i,j}$ is said be \emph{special}.
\item $m_{i,j}=m_{i+1,j}$: in this case, $m_{i,j}$ is said to be \emph{left-leaning}.
\item $m_{i,j}=m_{i+1,j+1}$: in this case, $m_{i,j}$ is said to be \emph{right-leaning}.
\end{itemize} 
In our running example
$$
\begin{array}{ccccccccccccccccc}
  &   &   &   &   &   &   &   & 4 &   &   &   &   &   &   &   & \\
  &   &   &   &   &   &   & 2 &   & 5 &   &   &   &   &   &   & \\
  &   &   &   &   &   & 2 &   & 3 &   & 5 &   &   &   &   &   & \\
  &   &   &   &   & 1 &   & 3 &   & 4 &   & 6 &   &   &   &   & \\
  &   &   &   & 1 &   & 2 &  &   3 &   & 5   &  & 6  &   &   &   & \\
  &   &   & 1 &   & 2 &   & 3 &   & 4 &   & 5 &   & 6 &   &   &
\end{array},
$$ 
the $2$ in row $3$ is special, while the $3$ in row $3$ is left-leaning and $5$ in row $2$ is right-leaning.
We define the following statistics for $i=1,\ldots,n-1$: 
\begin{equation} 
\begin{aligned} 
s_i(M) & = \# \text{ of special entries in  row $i$} & s(M) &=  \# \text{ of all special entries} \\
l_i(M) & = \# \text{ of left-leaning entries in  row $i$} & l(M) &=  \# \text{ of all left-leaning entries} \\
r_i(M) & = \# \text{ of right-leaning entries in  row $i$} & r(M) &=  \# \text{ of all right-leaning entries} 
\end{aligned}
\end{equation} 
For our example, this gives the following.
\begin{align*}
(s_1(M),s_2(M),s_3(M),s_4(M),s_5(M))&= (1,0,2,1,0), & s(M) &= 4, \\
(l_1(M),l_2(M),l_3(M),l_4(M),l_5(M))& = (0,1,1,1,3), & l(M) &= 6, \\
(r_1(M),r_2(M),r_3(M),r_4(M),r_5(M))& = (0,1,0,2,2), & r(M)& =5.     
\end{align*} 
Further, we let $s_0(M)=r_0(M)=l_0(M)=0$ and define now for $i=1,2,\ldots,n$
\begin{equation} 
d_i(M) = \sum_{j=1}^{i} m_{i,j} - \sum_{j=1}^{i-1} m_{i-1,j} + r_{i-1}(M) - l_{i-1}(M).
\end{equation} 
In our example, we obtain 
$$
(d_1(M),d_2(M),d_3(M),d_4(M),d_5(M),d_6(M))=(4,3,3,3,4,3).
$$
For monotone triangles with bottom row $1,2,\ldots,n$, the statistics $l,r,s$ correspond to the  \emph{complementary inversion number}, one version of the \emph{inversion number}, and the number of $-1$'s, respectively. The statistic $s$ was introduced in \cite[Section 5]{MilRobRum83}, the 
appropriate type of the inversion number appeared for the first time in \cite[Eq. (18)]{lambda} (note that 
in \cite{MilRobRum83} a slight variation is used) and the complementary inversion number appeared in 
\cite[Sec. 2.1]{Fis18}. For more details see also \cite[Section 3]{FSA22}.

We introduce the following weight on monotone triangles
\begin{equation}
\label{MTweight}
\W_0(M) = u^{r(M)} v^{l(M)} \prod_{i=1}^{n} X_i^{d_i(M)}  \left(w + u X_i + v X_i^{-1} \right)^{s_{i-1}(M)}. 
\end{equation}
Note that there is an analogy to the classical weight $\prod_{i=1}^n X_i^{\sum_{j=1}^{i} a_{i,j} - \sum_{j=1}^{i-1} a_{i-1,j}}$ for Gelfand-Tsetlin patterns $(a_{i,j})_{1 \le j \le i \le n}$ , which is 
used in the definition of the Schur polynomial.
When setting 
\begin{equation}
\label{spec}
u=v=1, w=-1 \text{ and } (X_1,\ldots,X_n)=(1,\ldots,1), 
\end{equation}
the weight specializes to $1$.
The weight of our example is
$$
u^5 v^6 X_1^4 X_2^3 X_3^3 X_4^3 X_5^4 X_6^3 
\left(w + u X_2 + v X_2^{-1} \right) \left(w+ u X_4 + v X_4^{-1} \right)^2 
\left(w + u X_5 + v X_5^{-1} \right).
$$

In order to state a formula for the generating function of monotone triangles with fixed bottom row, we need an extension of Schur polynomials to sequences of integers $\lambda=(\lambda_1,\ldots,\lambda_n)$ that are not necessarily partitions. If $\lambda_i-i=\lambda_j-j$ for some $1 \leq i < j \leq n$, we define $s_{\lambda}(X_1,\ldots,X_n)=0$. Otherwise there exists a unique permutation $\sigma$ on $\{1,2,\ldots,n\}$ with
\[
\lambda_{\sigma(1)}-\sigma(1) > \lambda_{\sigma(2)}-\sigma(2)  > \ldots > \lambda_{\sigma(n)}-\sigma(n). 
\]
Let $p$ be a non-negative integer such that $\lambda_{\sigma(n)} + n - \sigma(n) + p \ge 0$. Then we define 
\begin{equation} 
s_{\lambda}(X_1,\ldots,X_n) = \sgn(\sigma) s_{\mu}(X_1,\ldots,X_n) \prod_{i=1}^n X_i^{-p},
\end{equation} 
with 
$$
\mu = (\lambda_{\sigma(1)} + 1 - \sigma(1) +p,\lambda_{\sigma(2)} + 2-  \sigma(2)+p,\ldots,\lambda_{\sigma(n)} + n - \sigma(n)+p).
$$
Note that, although the choice of $p$ is not unique, the extension of the Schur polynomial is well-defined since 
$s_{\nu+p}(X_1,\ldots,X_n) \prod_{i=1}^n X_i^{-p}$ is independent of the non-negative integer $p$ for any partition $\nu$, where $\nu+p$ is defined as coordinatewise addition with $p$.
\medskip

We have the following explicit formula for the generating function of monotone triangles with fixed bottom row $k_1,k_2,\ldots,k_n$. 

\begin{theorem} 
\label{robbins0}
The generating function of monotone triangles $M$ with bottom row $k_1 < \ldots < k_n$ with respect to the weight $\W_0(M)$ is 
\begin{equation} 
\prod_{1 \le p < q \le n} \left( u  \e_{k_p} + v  \e_{k_q}^{-1} + w  \e_{k_p} \e_{k_q}^{-1} \right) \\
s_{(k_n,k_{n-1},\ldots,k_1)}(X_1,\ldots,X_n). 
\end{equation} 
In other words, the generating function is the same as in Theorem~\ref{main1}, except for the first product there.
\end{theorem}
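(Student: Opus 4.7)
The plan is to reduce Theorem~\ref{robbins0} to Theorem~\ref{main1} via Proposition~\ref{downup}. Specifically, I would first verify that for each monotone triangle $M$ with bottom row $k_1<\cdots<k_n$, the sum of $\W(\text{AMT})$ over all arrowed monotone triangles with underlying MT equal to $M$ equals $\W_0(M)\cdot\prod_{i=1}^n(uX_i+vX_i^{-1}+w)$. Given this, summing over $M$ and invoking Theorem~\ref{main1} yields
\[
\prod_{i=1}^n(uX_i+vX_i^{-1}+w)\cdot\sum_M \W_0(M) = \prod_{i=1}^n(uX_i+vX_i^{-1}+w)\cdot\prod_{1\le p<q\le n}\!\bigl(u\e_{k_p}+v\e_{k_q}^{-1}+w\e_{k_p}\e_{k_q}^{-1}\bigr)\,s_{(k_n,\ldots,k_1)}(X_1,\ldots,X_n),
\]
and cancelling the common factor $\prod_i(uX_i+vX_i^{-1}+w)$, which is a unit in the Laurent polynomial ring in the formal variables $u,v,w,X_1,\ldots,X_n$, gives the claim.

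To establish Proposition~\ref{downup}, I would analyze each entry $m_{i,j}$ independently. The AMT weight factors as a product of per-entry contributions, since $d_i(\text{AMT})=\sum_j m_{i,j}-\sum_j m_{i-1,j}+(\#\nearrow\text{ in row }i)-(\#\nwarrow\text{ in row }i)$ splits into a ``positional'' part determined by $M$ and a ``decoration'' part; consequently, an entry in row $i$ decorated $\nearrow$, $\nwarrow$, or $\nenwarrow$ contributes $uX_i$, $vX_i^{-1}$, or $w$ respectively. Definition~\ref{ramt} forces $\nearrow$ when $m_{i,j}$ equals its NW neighbor and $\nwarrow$ when $m_{i,j}$ equals its NE neighbor, leaving all three decorations available otherwise; the two forcing conditions never coincide, because the NW and NE neighbors lie in the same row of $M$ and differ strictly. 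The central combinatorial observation is that within row $i$ of $M$, an entry equals its NE neighbor precisely when that neighbor is left-leaning, and equals its NW neighbor precisely when that neighbor is right-leaning; hence row $i$ contains exactly $l_{i-1}(M)$ NE-forced entries, $r_{i-1}(M)$ NW-forced entries, and, using $s_{i-1}+l_{i-1}+r_{i-1}=i-1$, exactly $s_{i-1}(M)+1$ free entries. Summing over decorations then produces for row $i$ the factor $(uX_i+vX_i^{-1}+w)^{s_{i-1}(M)+1}\,u^{r_{i-1}(M)}v^{l_{i-1}(M)}X_i^{r_{i-1}(M)-l_{i-1}(M)}$, and combining with the positional $X_i^{\sum m_{i,j}-\sum m_{i-1,j}}$ factor reconstructs $X_i^{d_i(M)}$. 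Taking the product over $i$ and collecting exponents via $\sum_{i=1}^n s_{i-1}(M)=s(M)$, $\sum_{i=1}^n l_{i-1}(M)=l(M)$, $\sum_{i=1}^n r_{i-1}(M)=r(M)$ gives exactly $\W_0(M)\cdot\prod_i(uX_i+vX_i^{-1}+w)$, the extra factor arising from the ``$+1$'' in $s_{i-1}(M)+1$.

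The main obstacle is the bookkeeping in Proposition~\ref{downup}: identifying forced/free entries in row $i$ in terms of the leaning classification of row $i-1$, and matching the exponents of the $X_i$ so that the decoration contributions cleanly combine with the row-sum-difference exponent to reproduce $X_i^{d_i(M)}$. Once this per-row identity is in hand, Theorem~\ref{robbins0} is a one-line cancellation from Theorem~\ref{main1}.
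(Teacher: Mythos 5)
Your proposal is correct and follows essentially the same route as the paper: Theorem~\ref{robbins0} is obtained from Theorem~\ref{main1} by proving Proposition~\ref{downup} and cancelling the factor $\prod_{i=1}^n(uX_i+vX_i^{-1}+w)$, and your direct count of forced versus free entries in row $i$ (giving $r_{i-1}(M)$ forced $\nearrow$'s, $l_{i-1}(M)$ forced $\nwarrow$'s, and $s_{i-1}(M)+1$ free entries) is just a slightly more streamlined version of the paper's alternation argument for the same per-row identity. One tiny quibble: $\prod_i(uX_i+vX_i^{-1}+w)$ is not a \emph{unit} of the Laurent polynomial ring, but the cancellation is still legitimate because that ring is an integral domain and the factor is nonzero.
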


In Section~\ref{implies} we prove that this theorem is a consequence of Theorem~\ref{robbins}.

It is inconvenient that the weight of a monotone triangle is not a monomial in $u,v,w,X_1,\ldots,X_n$, because otherwise the exponents of 
$u,v,w,X_1,X_2,\ldots,X_n$ could simply be interpreted as statistics of the objects.  This can be changed by considering a slight modification: We will associate with each monotone triangle $3^{s(M)}$ objects and introduce weights for them, such that these weights are indeed monomials in $u,v,w, X_1,\ldots,X_n$ and the sum of weights of the $3^{s(M)}$ 
objects is the weight of the corresponding monotone triangle. 

The modification is straightforward: Each special entry of a monotone triangle can ``choose'' between being left-leaning, right-leaning or central; left-leaning is indicated for special entries by  $\swarrow$, right-leaning is indicated by $\searrow$, and central is indicated by $\downarrow$.
$$
\begin{array}{ccccccccccccccccc}
  &   &   &   &   &   &   &   & {_\swarrow 4} &   &   &   &   &   &   &   & \\
  &   &   &   &   &   &   & 2 &   & 5 &   &   &   &   &   &   & \\
  &   &   &   &   &   & 2  &   & 3 &   & {5_\searrow} &   &   &   &   &   & \\
  &   &   &   &   & 1 & \downarrow \atop \phantom{\downarrow} & 3 &   & {_ \swarrow 4} &   & 6 &   &   &   &   & \\
  &   &   &   & 1 &   & 2 &  &   3 &   & 5   &  & 6  &   &   &   & \\
  &   &   & 1 &   & 2 &   & 3 &   & 4 &   & 5 &   & 6 &   &   &
\end{array}
$$ 
Monotone triangles where the special entries are decorated in that way are said to be \emph{down-arrowed monotone triangles} (to distinguish them from arrowed monotone triangles). Concerning the weight with the conventions we had for (plain) monotone triangles in the 
sense that now also special entries that are left-leaning in row $i-1$ contribute to the corresponding $l_{i-1}(M)$, and thus to the power of $u$ and ``negatively'' to the power of $X_{i}$, while special entries that are right-leaning in row $i-1$ contribute to the corresponding $r_{i-1}(M)$, and thus to the power of $v$, and ``positively'' to the power of $X_{i}$. The total number of central entries is denoted by $c(M)$, and is the exponent of $w$. Note that in the above example the entry $2$ in row $3$ is central. Thus its weight is
$$
u^6 v^8 w X_1^4 X_2^2 X_3^3 X_4^4 X_5^3 X_6^3. 
$$
Since this mimics what happens when expanding the product of the second factors in \eqref{MTweight}, Theorem~\ref{robbins0} provides the generating function of down-arrowed monotone triangles with fixed bottom row.

\subsection{Extended arrowed monotone triangles} 
\label{eamt}
There are certain similarities between the down-arrowed monotone triangles and the \emph{arrowed monotone triangles} as described in Definition~\ref{ramt}: Each (plain) monotone triangle is assigned a set of these special arrowed monotone triangles (namely those that have the monotone triangle as the underlying one when simply ignoring the arrows) such that the weight of the monotone triangle differs by the sum of weights of the arrowed monotone triangles assigned to it only by the factor in \eqref{factor}). The weight assigned to a fixed arrowed monotone triangle is also a monomial in $u,v,w, X_1,\ldots,X_n$. The advantage of arrowed monotone triangles is that they allow certain generalizations, including the extension to integer sequences $k_1,k_2,\ldots,k_n$ that are not necessarily increasing. Also arrowed monotone triangles will be crucial for our proof of Theorem~\ref{robbins0}.

This extension to not necessarily increasing bottom rows requires the notion of signed sets, which is a pair of disjoint sets $\u S=(S^+,S^-)$. The size of a signed set is 
\begin{equation} 
| \u S| = |S^+| - |S^-|.
\end{equation} 
Signed sets can obviously also be considered as ordinary sets with a sign function, and the size is then the generating function with respect to the sign. More generally, for given weights on the elements of $\u S$, the generating function  is defined as 
\begin{equation} 
\sum_{s \in S^+} \W(s)- \sum_{s \in S^-} \W(s).
\end{equation} 
We will use \emph{signed intervals}: for $a, b \in \mathbb{Z}$, we let 
\begin{equation} 
\si{a}{b} = \begin{cases} ([a,b],\emptyset) & a \le b \\
                                       (\emptyset,[b+1,a-1]) & b+1 \le  a-1 \\
                                       (\emptyset,\emptyset) & b+1 = a
                \end{cases}.                       
\end{equation} 
The sign of a signed interval is positive if and only if the negative part is empty.

An \emph{extended arrowed monotone triangle} (AMT) is a triangular array of the following form
$$
\begin{array}{ccccccccccccccccc}
  &   &   &   &   &   &   &   & a_{1,1} &   &   &   &   &   &   &   & \\
  &   &   &   &   &   &   & a_{2,1} &   & a_{2,2} &   &   &   &   &   &   & \\
  &   &   &   &   &   & \dots &   & \dots &   & \dots &   &   &   &   &   & \\
  &   &   &   &   & a_{n-2,1} &   & \dots &   & \dots &   & a_{n-2,n-2} &   &   &   &   & \\
  &   &   &   & a_{n-1,1} &   & a_{n-1,2} &  &   \dots &   & \dots   &  & a_{n-1,n-1}  &   &   &   & \\
  &   &   & a_{n,1} &   & a_{n,2} &   & a_{n,3} &   & \dots &   & \dots &   & a_{n,n} &   &   &
\end{array},
$$
where each $a_{i,j}$ is an integer decorated with an element from $\{\nwarrow, \nearrow, \nenwarrow,\emptyset \}$ and the following is satisfied for each integer $a$ not in the bottom row: Suppose $b$ is the 
southwest-neighbor of $a$ and $c$ is the 
southeast-neighbor of $a$, respectively, i.e.,
$$
\begin{array}{ccc}
&a& \\
b&&c
\end{array}.
$$
Depending on the decoration of $b, c$, denoted as $\dec(b), \dec ( c )$, respectively, we need to consider four cases:
\begin{itemize}
\item If $\dec(b) \in \{\nwarrow,\emptyset\}$ and $\dec(c) \in \{\nearrow, \emptyset\}$: $a \in \si{b}{c}$
\item If $\dec(b) \in \{\nwarrow,\emptyset\}$ and $\dec(c) \in \{\nwarrow, \nenwarrow\}$: $a \in \si{b}{c-1}$
\item If $\dec(b) \in \{\nearrow, \nenwarrow \}$ and $\dec(c) \in \{\nearrow,\emptyset\}$: $a \in \si{b+1}{c}$
\item If $\dec(b) \in \{\nearrow, \nenwarrow \}$ and $\dec(c) \in \{\nwarrow, \nenwarrow\}$: $a \in \si{b+1}{c-1}$
\end{itemize}
(Note that Definition~\ref{ramt} is equivalent to this if $a_{n,1} < a_{n,2} < \ldots < a_{n,n}$ because then all occurring intervals 
$\si{b(+1)}{c(-1)}$ are positive and all rows are strictly increasing.)
Extended arrowed monotone triangles are considered as a signed set as follows: For a fixed 
extended arrowed monotone triangle $(a_{i,j})_{1 \le j \le i \le n}$, each negative interval 
$\si{a_{i,j}(+1)}{a_{i,j+1}(-1)}$, with $i \ge 2$ and $j < i$, contributes a multiplicative 
$-1$, choosing $a_{i,j}+1$ iff $\dec(a_{i,j}) \in 
\{\nearrow, \nenwarrow \}$  and $a_{i,j}$ otherwise, and $a_{i,j+1} - 1$ iff 
$\dec(a_{i,j+1}) \in \{ \nwarrow, \nenwarrow \}$ and $a_{i,j+1}$ otherwise.
 
We associate the following weight to a given extended arrowed monotone triangle $A=(a_{i,j})_{1 \le j \le i \le n}$:
\begin{equation}
\W(A) = \wne^{\# \nearrow} \wnw^{\# \nwarrow} \wnenw^{\# \nenwarrow} \wnone^{\# \emptyset}  \prod_{i=1}^{n} X_i^{\sum_{j=1}^i a_{i,j}  - \sum_{j=1}^{i-1} a_{i-1,j} + \# \nearrow \text{in row $i$ } - \# \nwarrow \text{in row $i$ }} 
\end{equation}
We refrain from using $u,v,w$ and use $\wne, \wnw, \wnenw$ instead to prepare for a further generalization in Section~\ref{decorated}.

We have the following explicit formula for the generating function of extended arrowed monotone triangles with fixed bottom row $k_1,k_2,\ldots,k_n$. It is a generalization of Theorem~\ref{main1} and its proof is provided in Section~\ref{constantterm}.

\begin{theorem} 
\label{robbins}
The generating function of extended arrowed monotone triangles with bottom row $k_1,\ldots,k_n$ is 
\begin{multline}
\prod_{i=1}^{n} (\wne \cdot X_i + \wnw \cdot X_i^{-1} + \wnenw + \wnone) \\ 
\prod_{1 \le p < q \le n} \left( \wne \cdot \e_{k_p} + \wnw \cdot \e_{k_q}^{-1} + \wnenw \cdot \e_{k_p} \e_{k_q}^{-1} + \wnone \cdot \id  \right) \\
s_{(k_n,k_{n-1},\ldots,k_1)}(X_1,\ldots,X_n). 
\end{multline} 
\end{theorem}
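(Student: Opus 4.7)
The plan is to proceed by induction on $n$. The base case $n=1$ is immediate: the only extended AMT with one row consists of the single entry $k_1$ with one of four decorations, and direct computation gives the generating function $X_1^{k_1}(\wne X_1+\wnw X_1^{-1}+\wnenw+\wnone)$, which matches the claimed formula since the operator product over $p<q$ is empty and $s_{(k_1)}(X_1)=X_1^{k_1}$.

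For the inductive step, I would peel off the bottom row. Fix the decorations $d_1,\ldots,d_n$ of the bottom entries $k_1,\ldots,k_n$ and the values $l_1,\ldots,l_{n-1}$ of the second-to-bottom row. By the definition of extended AMT, each $l_j$ lies in a signed interval whose endpoints depend on $k_j, k_{j+1}, d_j, d_{j+1}$ via the four cases in the definition. The top $n-1$ rows then form an extended AMT with bottom row $(l_1,\ldots,l_{n-1})$, and by the inductive hypothesis its generating function is $\prod_{i=1}^{n-1}(\wne X_i+\wnw X_i^{-1}+\wnenw+\wnone)\prod_{1\le p<q\le n-1}(\wne\e_{l_p}+\wnw\e_{l_q}^{-1}+\wnenw\e_{l_p}\e_{l_q}^{-1}+\wnone)\,s_{(l_{n-1},\ldots,l_1)}(X_1,\ldots,X_{n-1})$. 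Substituting this and commuting the inner operators $\e_{l_p}$ past the signed summation over the $l_j$ (using that shifts in summation variables can be absorbed by re-indexing, and that signed intervals obey the telescoping rule $\si{a}{b}+\si{b+1}{c}=\si{a}{c}$), the claim reduces to a purely algebraic identity about Schur polynomials.

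That identity asserts that summing over the four decoration choices for each $k_i$ (with weights $\wne,\wnw,\wnenw,\wnone$ and $X_n$-contribution $X_n^{+1},X_n^{-1},1,1$) combined with the signed-interval sum over $l_j$ weighted by $X_n^{-l_j}$ produces the action of $(\wne X_n+\wnw X_n^{-1}+\wnenw+\wnone)\prod_{p=1}^{n-1}(\wne\e_{k_p}+\wnw\e_{k_n}^{-1}+\wnenw\e_{k_p}\e_{k_n}^{-1}+\wnone)$ on $s_{(k_n,\ldots,k_1)}(X_1,\ldots,X_n)$. I would verify this via the bialternant formula $s_{(k_n,\ldots,k_1)}(X_1,\ldots,X_n)=\det(X_i^{k_j+j-1})/\det(X_i^{j-1})$: the operators $\e_{k_p}$ and $\e_{k_n}^{-1}$ shift the exponent in the $p$-th and $n$-th columns of the numerator determinant, so each of the $4^n$ operator summands corresponds to one concrete determinant. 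On the combinatorial side, each signed-interval sum evaluates as the telescoping geometric expression $\sum_{l\in\si{a}{b}}X_n^{-l}=(X_n^{-a}-X_n^{-b-1})/(1-X_n^{-1})$, which provides the $X_n$-column data needed to reconstruct the $n$-variable bialternant from the $(n-1)$-variable one.

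The main obstacle is this final identity: on the combinatorial side the signed intervals couple \emph{adjacent} pairs $(k_j,k_{j+1})$, whereas on the formula side the new operator factors couple each $k_p$ with the single anchor $k_n$. Bridging these two coupling structures requires a careful bialternant manipulation essentially equivalent to a Laplace-type expansion along the last column, combined with the telescoping structure of the signed sums to shift the adjacent dependencies into long-range ones. Additional care is needed to handle the extension of Schur polynomials to non-increasing integer sequences that arise when the operators shift $k_p$'s into non-partition configurations; here the sign-of-permutation convention in the definition of the extended $s_\lambda$ must be invoked to match the negative contributions of the signed intervals on the combinatorial side.
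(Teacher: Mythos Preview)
Your overall strategy matches the paper's: induction on $n$, peel off the bottom row, apply the inductive hypothesis to the remaining $n-1$ rows, and reduce to an algebraic identity about Schur polynomials. The base case is fine, and you have correctly diagnosed the central difficulty: the combinatorial recursion couples \emph{adjacent} pairs $(k_j,k_{j+1})$ through the signed intervals, whereas the target formula couples each $k_p$ to $k_q$ for all $p<q$. But your proposed resolution of that difficulty is where the gaps lie.

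First, the step ``commuting the inner operators $\e_{l_p}$ past the signed summation over the $l_j$'' is not a legal move as stated: $\e_{l_p}$ acts on the summand, and you cannot pull it outside a sum over $l_p$ by re-indexing without changing the summation range, which is precisely what is delicate here. The paper never attempts this; the $(n-1)$-row operator product stays inside throughout.

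Second, your telescoping formula $\sum_{l\in\si{a}{b}}X_n^{-l}=(X_n^{-a}-X_n^{-b-1})/(1-X_n^{-1})$ introduces a denominator $1-X_n^{-1}$ that you give no mechanism for removing. The paper avoids this entirely by a preparatory trick: it works with the Jacobi--Trudi determinant $\det\bigl(h_{l_i+i-j}(X_{n-j},\ldots,X_{n-1})\bigr)$, inserts the difference operators $\Delta_{l_i,X_n}p(l_i):=p(l_i)-X_n\,p(l_i-1)$ using the identity $\Delta_{l,X_n}h_l(\ldots,X_n)=h_l(\ldots)$, and only then sums. The point is that $\sum_{l=a}^{b}X_n^{-l}\Delta_{l,X_n}p(l)=X_n^{-b}p(b)-X_n^{-a+1}p(a-1)$ telescopes \emph{without} a denominator, producing exactly $2^{n-1}$ boundary terms (each $l_i$ at one of its two endpoints).

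Third --- and this is the missing key idea --- you need to explain why, of these $2^{n-1}$ boundary terms, all but $n$ cancel. The paper shows that any term with $l_{i-1}=k_i$ and $l_i=k_i-1$ vanishes: after absorbing the bottom-row decoration operator $V_{k_i,k'_i,n}$ one obtains an expression symmetric in $l_{i-1},l_i$ applied to a function antisymmetric in $l_{i-1},l_i$ (coming from the Jacobi--Trudi determinant shifted by $\e_{l_i}^{-1}$). This symmetry/antisymmetry clash is what kills the ``mixed'' terms and leaves exactly the $n$ terms $(l_1,\ldots,l_{n-1})=(k_1{-}1,\ldots,k_{i-1}{-}1,k_{i+1},\ldots,k_n)$, which are then recognized as the Laplace expansion of the $n\times n$ Jacobi--Trudi determinant along its first column. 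Your proposal gestures at a ``Laplace-type expansion'' but does not supply this cancellation mechanism, which is the heart of the argument.
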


\subsection{Theorem~\ref{robbins} implies Theorem~\ref{robbins0}} 
\label{implies}

We consider extended arrowed monotone triangles with strictly increasing bottom row that have no $\emptyset$'s assigned as decoration, which clearly corresponds to setting $\wnone = 0$ in Theorem~\ref{robbins} (and also to the arrowed monotone triangles of Section~\ref{sectmain}). Neglecting the decorations of such extended arrowed monotone triangles gives monotone triangles, in fact, all monotone triangles can be obtained that way (but, in general, each of them will appear multiple times, with several possible decorations). 

The purpose of this section is the following proposition. 

\begin{prop}
\label{downup}
For a given monotone triangle, the sum of weights of all associated arrowed monotone triangles equals the weight of the fixed monotone triangle (as defined in Section~\ref{damt}) up to the factor 
\begin{equation}
\label{factor} 
\prod_{i=1}^{n} (u  X_i + v  X_i^{-1}+w)
\end{equation}
when setting $\wne = u, \wnw = v, \wnenw = w$. 
\end{prop}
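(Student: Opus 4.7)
The plan is to fix a monotone triangle $M=(m_{i,j})$ and directly expand $\sum_A \W(A)$, the sum of weights over all arrowed monotone triangles $A$ whose underlying triangle is $M$, by tracking the constraints that Definition~\ref{ramt} places on each arrow decoration.

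First I would classify each entry $m_{i,j}$ of $M$ with $i \ge 2$ according to whether it equals its northwest neighbor $m_{i-1,j-1}$, its northeast neighbor $m_{i-1,j}$, or neither (corner and top entries with a missing neighbor fall automatically in the ``neither'' case for that missing direction). The crucial observation is that strict increase along rows, $m_{i-1,j-1} < m_{i-1,j}$, prevents $m_{i,j}$ from equalling both neighbors simultaneously, so the three cases are mutually exclusive. By Definition~\ref{ramt} an entry in the first class is forced to carry $\nearrow$, an entry in the second class is forced to carry $\nwarrow$, and an entry in the third class is free to carry any of $\nwarrow,\nearrow,\nenwarrow$.

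Next I would translate this classification into a per-row count using the monotone-triangle statistics from Section~\ref{damt}. An entry of row $i$ equal to its NE neighbor corresponds exactly to a left-leaning entry in row $i-1$, and similarly for right-leaning entries, so row $i$ contains $l_{i-1}(M)$ forced $\nwarrow$'s and $r_{i-1}(M)$ forced $\nearrow$'s. Since $l_{i-1}+r_{i-1}+s_{i-1}=i-1$, the remaining $i - l_{i-1} - r_{i-1} = s_{i-1}(M)+1$ entries of row $i$ are free. The convention $s_0=l_0=r_0=0$ makes the formula give a single unconstrained top entry when $i=1$.

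The final step is a routine assembly. In row $i$ a forced $\nwarrow$ contributes $vX_i^{-1}$, a forced $\nearrow$ contributes $uX_i$, and each free entry independently contributes the summed factor $uX_i+vX_i^{-1}+w$; these combine with the purely $M$-dependent factor $\prod_i X_i^{D_i}$ where $D_i=\sum_j m_{i,j}-\sum_j m_{i-1,j}$ coming from the entries of $M$ themselves. Collecting exponents via $r(M)=\sum_i r_{i-1}$, $l(M)=\sum_i l_{i-1}$, and $d_i(M)=D_i+r_{i-1}-l_{i-1}$ reproduces exactly
\[
\sum_A \W(A) \;=\; \W_0(M)\cdot\prod_{i=1}^{n}\bigl(uX_i+vX_i^{-1}+w\bigr),
\]
as required. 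I do not expect any genuine obstacle here; the only subtlety is verifying that the three-way classification is exhaustive and mutually exclusive at interior and boundary entries, and that reduces to the strict row-increase condition for monotone triangles together with the convention that missing neighbors impose no constraint.
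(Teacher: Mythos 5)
Your proof is correct and follows essentially the same route as the paper's: classify each entry of row $i$ as forced-$\nearrow$ (equal to its northwest neighbour), forced-$\nwarrow$ (equal to its northeast neighbour), or free, note that the choices are local and independent, and expand row by row. The only cosmetic difference is that you obtain the count of free entries in row $i$ directly from $l_{i-1}(M)+r_{i-1}(M)+s_{i-1}(M)=i-1$ together with the disjointness of the two forced classes, whereas the paper derives the same count $s_{i-1}(M)+1$ by an alternation argument along the pair of consecutive rows.
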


\begin{proof}
In order to show this, it suffices to consider the situation of two consecutive rows.
Suppose $k_1 < k_2 < \ldots < k_i$ and that $m_1,\ldots,m_{i-1}$ is interlacing in the sense that 
$$
k_1 \le m_1 \le k_2 \le m_2 \le k_3 \le \ldots \le m_{i-1} \le k_i
$$
with $m_1 < m_2 < \ldots < m_{i-1}$. One should think of $(m_1,\ldots,m_{i-1})$ and $(k_1,\ldots,k_i)$ as two consecutive rows in a monotone triangle. For a monotone triangle $M$, the weight $\W_0(M)$ can be ``structured'' according to the contribution of pairs of consecutive rows, i.e., the contribution of the  pair of rows $i-1$ 
and $i$, $1 \le i \le n$, is 
$$
u^{r_{i-1}(A)} v^{l_{i-1}(A)} X_i^{d_i(A)} (w+ u X_i + v X_i^{-1})^{s_{i-1}(A)}.
$$ 
In this sense, the contribution of the rows $(m_1,\ldots,m_{i-1})$ and $(k_1,\ldots,k_i)$ to the weight of a monotone triangle is as follows:
\begin{equation} 
\label{cont}
(u X_i)^{\# \text{ of right-leaning } m_j} (v X_{i}^{-1})^{\# \text{ of left-leaning } m_j} 
\left( w + u X_i + v X_i^{-1} \right)^{\# \text{ of special } m_j} X_i^{k_1 + \ldots + k_i - m_1 - \ldots - m_{i-1}} 
\end{equation} 

We determine all possible arrow assignments to $k_1,\ldots,k_i$ in arrowed monotone triangles. We say that $m_j$ is NL (for non left-leaning) if 
$m_j \not= k_j$, and it is NR (for non right-leaning) if $m_j \not= k_{j+1}$. 
\begin{itemize} 
\item If $m_j$ is NL, then we may  assign $\nearrow$ to $k_j$.
\item If $m_{j-1}$ is NR, then we may  assign $\nwarrow$ to $k_j$. 
\item If $m_j$ is NL and $m_{j-1}$ is NR, we may assign $\nenwarrow$ to $k_j$.
\item Each $k_j$ has to be assigned at least one arrow (which is possible since $m_{j-1} = k_j = m_j$ is excluded). 
\end{itemize} 

As for arrowed monotone triangles, the contribution of the pair of the rows $i-1$ and $i$, $1 \le i \le n$, is 
$$
(u X_i)^{\# \nearrow \text{ in row } i} (v X_i^{-1})^{\# \nwarrow \text{ in row } i}  w^{\# \nenwarrow \text{ in row } i} 
X_i^{\text{sum of entries in row $i$} - \text{sum of entries in row $i-1$}}
$$ 
We need to determine the sum of weights over all arrow assignments for a particular choice of $(m_1,\ldots,m_{i-1})$ and 
$(k_1,\ldots,k_i)$. 
Observe that we have a choice for the decoration of $k_j$ if and only if one of the following is satisfied.
\begin{itemize} 
\item $j \not =1,i$, and $m_{j-1}$ is NR and $m_{j}$ is NL.
\item $j=1$ and $m_1$ is NL.
\item $j=i$ and $m_{i-1}$ is NR.
\end{itemize} 
In these cases, we are completely free to choose between $\nearrow$, $\nwarrow$ and $\nenwarrow$. A $\nenwarrow$ contributes $w$, a  $\nearrow$ contributes $u X_i$, while $\nwarrow$ contributes $v X_i^{-1}$. In summary, the contribution of such a $k_j$ is 
$$
w+ u X_i + v X_i^{-1}.
$$
We say that $k_j$ is up-special in this case. If, on the other hand, $k_j$ is not up-special then either $m_{j-1}=k_j$ or $m_j=k_j$. In the first case, 
$k_j < m_j$, $k_j$ needs to be decorated with $\nearrow$, and we have a contribution of $u X_i$ to the weight, while in the second case $m_{j-1} < k_j$, $k_j$ 
needs to be decorated with $\nwarrow$, and we have a contribution of $v X_i^{-1}$ to the weight. Noting that $m_{j-1}$ is right-leaning in the first case, and $m_j$ is left-leaning in the second case, we have the following total weight
\begin{multline*} 
(u X_i)^{\# \text{ of right-leaning $m_j$}} (v X_{i}^{-1})^{\# \text{ of left-leaning $m_j$}} 
\left( w + u X_i + v X_i^{-1} \right)^{\# \text{ of up-special $k_j$'s}} \\ 
\times X_i^{k_1+\ldots+k_i-m_1-\ldots-m_{i-1}}.
\end{multline*} 
The assertion follows now after comparing with \eqref{cont} and observing that the number of special elements among the $m_j$ (i.e., those with $k_i < m_i < k_{i+1}$) is precisely one less than the number of up-special elements among the $k_j$: if we work through $k_1, m_1, k_2, \ldots,m_{n-1}, k_n$, then we will see that up-special $k_j$'s and special $m_j$'s alternate, starting and ending with up-special $k_j$'s, noting also that $k_1$ and $k_i$ are already up-special if $k_1 \not= m_1$ and $m_{i-1} \not= k_i$, respectively.
\end{proof}

\section{Extension on the DPP-side} 
\label{dpp} 

{In this section, we will elaborate on various versions of the extended ``DPP-objects''. As suggested by the titles of the sections, we start by recalling the classical ``DPP-paths'' that are in easy bijective correspondence with DPPs. Next we introduce the extended DPP-paths and justify why they are indeed an extension before we finally pass to plane partitions.
The following table gives an overview of the various objects.

\begin{table}[h]
\begin{tabular}{|c|c|c|} \hline 
objects & weight & definition \\
\hline
set-valued near-balanced CSPPs (SBCSPPs) & monomial & Definition~\ref{PP}\\
extended DPP paths & non-monomial & Section~\ref{dpp}\\
DPP pairs & monomial & Definition~\ref{DPPshape}\\
balanced CSPPs (BCSPPs) & non-monomial & Definition~\ref{BCSPP} \\ \hline
\end{tabular}
\end{table}


\subsection{Classical DPP-paths}
It is well-known \cite{Lal03,Kra06} that DPPs can be encoded by families of non-intersection lattice paths in the plane as follows (our convention differs from the one more frequently used by the reflection along the line $y=x$). Let $A_i=(0,i-2)$  and $E_i=(i,0)$, $i=2,\ldots,n$. We choose $S \subseteq \{2,\ldots,n\}$, and consider families of $|S|$ non-intersecting lattice paths starting in 
$(A_i)_{i \in S}$ and ending in $(E_i)_{i \in S}$, with unit right steps and unit down-steps. Here such families of paths are referred to as \emph{DPP paths}, or \emph{$n$-DPP paths} to stress $n$. The family that corresponds to the example from the introduction is provided in Figure~\ref{nilp}.

\begin{figure} 
\scalebox{0.5}{
\psfrag{1}{\color{dg} \Huge$1$}
\psfrag{2}{\color{dg} \Huge$2$}
\psfrag{3}{\color{dg} \Huge$3$}
\psfrag{4}{\color{dg} \Huge$4$}
\psfrag{5}{\color{dg} \Huge$5$}
\psfrag{6}{\color{dg} \Huge$6$}
\psfrag{7}{\color{dg} \Huge$7$}
\psfrag{8}{\color{dg} \Huge$8$}
\includegraphics{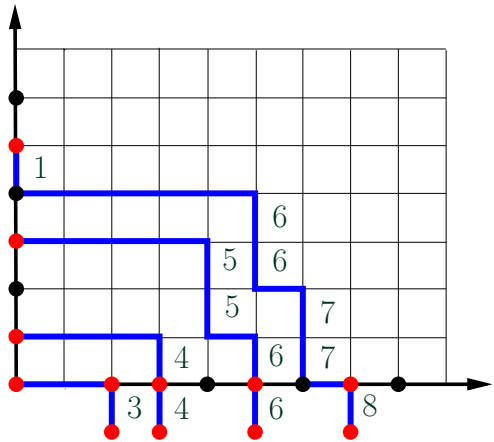}}
\caption{\label{nilp} Family of non-intersecting lattice paths that corresponds to a CSSPP of class $2$.} 
\end{figure}

The corresponding CSSPP of class $2$ is obtained as follows. We extend each path at the end point $E_i$ by an additional down-step. Now the entries of the CSSPP correspond to the down-steps as follows. Each path corresponds to a row of the CSSPP, with the topmost path corresponding to the top row. The entries are obtained as the distances of the down-steps from the $y$-axis by adding $1$ to each of them, where the bottommost down-step of a path corresponds to the first part of the corresponding row. (The usual convention, where the picture is reflected along the line $y=x$, would have clearly allowed us to speak essentially about the heights of the vertical steps, but we will see the advantage of the current convention in our setting.) 

\subsection{Our extended DPP-paths}
\label{extendedpaths} 
Our next goal is to define related objects and assign certain weights to them such that when we specialize the weights in a certain way, all objects have weights $0,1,-1$, thereby establishing a signed set (ignoring objects with weight $0$ then) such that the following is satisfied:
\begin{itemize} 
\item There is a very simple sign reversing involution on a subset of them that ``cancels'' all objects with negative weight (i.e, the negative part of the signed set is fully contained in this subset). 
\item There is a very simple bijection between the remaining objects (all of which have weight $1$) and the families of non-intersecting lattice paths as described above.
\end{itemize} 

We start with the observation that, in the above ``picture'', we can also add the starting point $A_1=(0,-1)$ and the end point 
$E_1=(1,0)$, because there is simply no path from $A_1$ to $E_1$ using the given step set, which implies that we can assume $1 \notin S$.
We now define the following different starting points and end points: $A'_i=(-i,i-2)$ and $E'_i=(i,-2)$, $i=1,\ldots,n$. Again we choose a subset 
$S \subseteq \{1,\ldots,n\}$, and consider families of $|S|$ non-intersecting lattice paths starting in 
$(A'_i)_{i \in S}$ and ending in $(E'_i)_{i \in S}$. 
Now the step set depends on the region we are in:
\begin{itemize} 
\item Strictly left of the $y$-axis, we allow unit up-steps and unit right-steps.
\item In the region $\{(x,y) | x \ge 0, y \ge -1 \}$, we allow unit down-steps and unit right-steps.
\item Weakly right of the $y$-axis, if we go below the $y=-1$ line (which we do when we make the final step to reach $E_i$), we can either perform a unit down-step or a diagonal-step $(1,-1)$. 
\item The paths should remain weakly below the line $y=n-2$. 
\end{itemize} 
An example for the case $n=8$ is provided in Figure~\ref{nilp2}. These families of paths are said to be the extended ($n$-)DPP paths. 

\begin{figure} 
\scalebox{0.4}{
\psfrag{1}{\color{dg} \Huge$1$}
\psfrag{2}{\color{dg} \Huge$2$}
\psfrag{3}{\color{dg} \Huge$3$}
\psfrag{4}{\color{dg} \Huge$4$}
\psfrag{5}{\color{dg} \Huge$5$}
\psfrag{6}{\color{dg} \Huge$6$}
\psfrag{7}{\color{dg} \Huge$7$}
\psfrag{8}{\color{dg} \Huge$8$}
\includegraphics{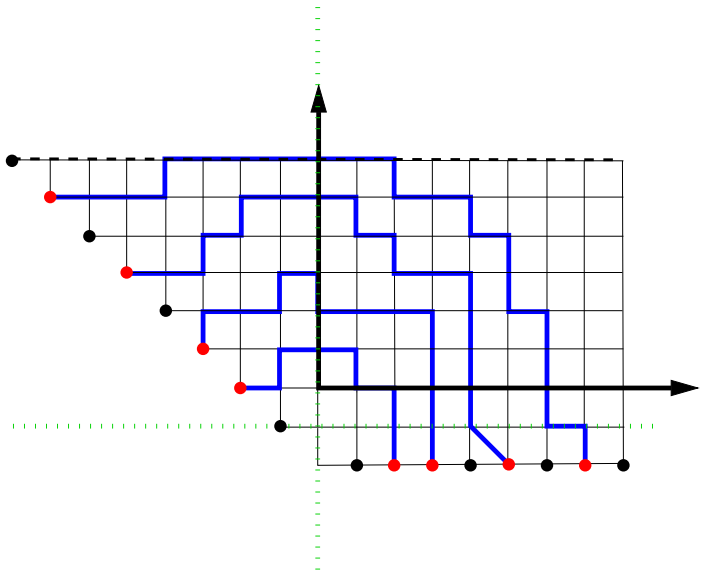}}
\caption{\label{nilp2} Extended DPP-paths} 
\end{figure}

We assign weights to such families by assigning weights to the individual steps; the weight of a path is then the product of all step weights and the weight of a family is the product of all path weights, multiplied by $v^{\binom{n+1}{2}}$ if we consider order $n$ objects. (Let us emphasize that \emph{any} subset of the $n$ pairs of starting points and end points gives rise to order $n$ objects.) The weight of an edge depends on the region, as indicated in Figure~\ref{weights}.
\begin{itemize} 
\item Strictly left of the $y$-axis, a unit up step that ends in $(p,q)$ has weight $X_{m} v^{-1} w + 1$ with $m=p+q+2$, while 
a right step that ends in $(p,q)$ has weight $X_{m} v^{-1}$ also with $m=p+q+2$.
\item In the region $\{(x,y) | x \ge 0, y \ge -1 \}$, a unit right step at height $m$ above the $x$-axis has weight $u X_{m+2}$, while down-steps have weight $1$.
\item As for the last step, we have weight $w$ for the diagonal steps and weight $1$ for the down-steps.
\end{itemize} 

\begin{figure} 
\scalebox{0.4}{
\psfrag{1}{\large $\bar 1$}
\psfrag{2}{\large $\bar 2$}
\psfrag{3}{\large $\bar 3$}
\psfrag{4}{\large $\bar 4$}
\psfrag{5}{\large $\bar 5$}
\psfrag{6}{\large $\bar 6$}
\psfrag{7}{\large $\bar 7$}
\psfrag{8}{\large $\bar 8$}
\psfrag{1u}{\large $1$}
\psfrag{2u}{\large $2$}
\psfrag{3u}{\large $3$}
\psfrag{4u}{\large $4$}
\psfrag{5u}{\large $5$}
\psfrag{6u}{\large $6$}
\psfrag{7u}{\large $7$}
\psfrag{8u}{\large $8$}
\psfrag{1r}{\large $\widehat{1}$}
\psfrag{2r}{\large $\widehat{2}$}
\psfrag{3r}{\large $\widehat{3}$}
\psfrag{4r}{\large $\widehat{4}$}
\psfrag{5r}{\large $\widehat{5}$}
\psfrag{6r}{\large $\widehat{6}$}
\psfrag{7r}{\large $\widehat{7}$}
\psfrag{8r}{\large $\widehat{8}$}
\psfrag{w}{\large $w$}
\includegraphics{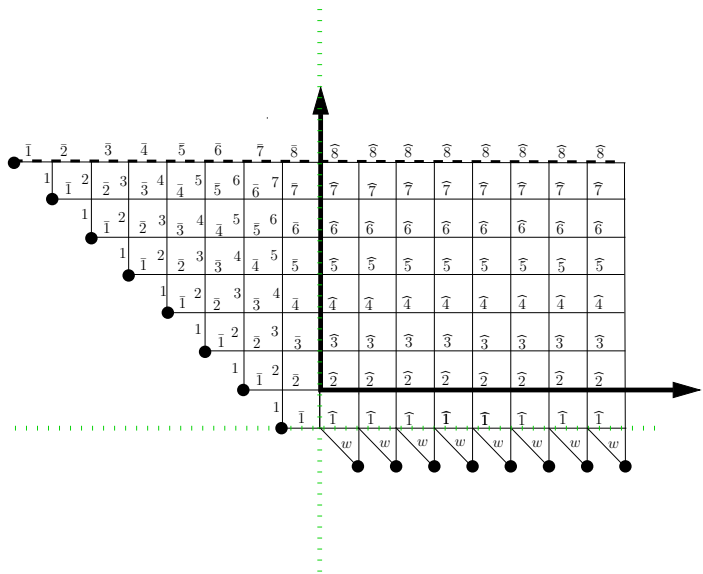}}
\caption{\label{weights} The weights of the extended DPP paths, where an integer  $i$ stands for  $X_i v^{-1} w + 1$, $\bar i$ stands for $X_i v^{-1}$, and  $\widehat{i}$ stands for $u X_i$; no label means that the step has weight $1$.} 
\end{figure}

\subsection{Why is it an extension?}
Next we will  argue why the specialization given in \eqref{spec} brings us to ordinary non-extended DPP paths: In this case all edge weights are $\pm 1, 0$. In particular, we have $0$ weight for all up steps left for the $y$-axis, so we can assume in this case that  
we have only right steps from $A'_i$ until we reach the $y$-axis, which will then be reached ``at'' $A_i$, and so we can simply let the paths start there. What remains are edges with weights 
$\pm 1$, where the edges with weight $-1$ are precisely the (possible) diagonal-steps that reach $E'_i$. However, such paths can actually be identified with paths that reach $E_i$ by a down-step that is preceded by a horizontal step. Thus we can also assume that paths with diagonal-steps cancel such paths (from right to left), so that all paths end with two down-step. Thus it suffices to consider paths that end in $E_i$ instead of $E'_i$, which correspond to classical DPP paths. 

We also want to introduce a slight variation of these families of non-intersecting lattice paths so that each object actually gets assigned a monomial in $u,v,w,X_1,\ldots,X_n$ (and the exponents can be viewed as statistics). For that we only need to consider a variation that concerns the vertical edges strictly left of the $y$-axis (since their weights are not monomials): We simply duplicate them and assign to one of them the weight $X_m v^{-1} w$ (where its top vertex $(p,q)$ satisfies $p+q+2=m$), while the other gets assigned $1$. 

\subsection{From non-intersecting lattice paths to plane partitions}
Finally, we work out how these families of non-intersecting lattice paths correspond to pairs of a CSSPP $R$ and a row strict shifted plane partition (RSSPP) $L$, where the latter is defined as a filling of a shifted Young diagram with positive integers that decrease strictly along rows and weakly down columns: The CSSPP $R$ corresponds to the portions of the paths that are right of the $y$-axis and the RSSPP $L$
 to the portions of the paths that are left of the $y$-axis. The entries correspond to the horizontal steps, and are essentially determined by their heights as described in the next paragraph. The diagonal steps indicate that respective row lengths of the CSSPP and the RSSPP  
differ by $1$. The pair of shifted PPs that correspond to the extended DPP paths from Figure~\ref{nilp2} is displayed next.
$$
L= \begin{ytableau}
8 & 7 & 6 & 5 & 3 & 2 & 1 \\
\none & 7 & 6 & 4 & 2 & 1 \\
\none & \none & 5 & 3 & 2  \\
\none & \none & \none & 3 & 1
\end{ytableau} \qquad
R= \begin{ytableau}
8 & 8 & 7 & 7 & 6 & 4 & 1 \\
\none & 7 & 6 & 5 & 5  \\
\none & \none & 4 & 4 & 4  \\
\none & \none & \none & 3 & 2
\end{ytableau}
$$
More precisely, $R$ 
 collects the heights of the horizontal steps increased by $2$ of the portions of the paths that are located right of the $y$-axis, while $L$ 
  is determined by the horizontal steps of the portions of the paths left of the $y$-axis as follows: again add $2$ to these heights, subtract their distances (of their rightmost points) from the $y$-axis, and read these numbers from right to left.

In general, we have a pair of an RSSPP $L$ and a CSSPP $R$ with parts in $\{1,2,\ldots,n\}$ such that the following conditions are fulfilled:
\begin{enumerate}
\item The number of parts in row $i$ of $L$ is equal to or one greater than the number of parts in row $i$ of $R$.
\item The first part of row $i$ of $L$ is greater than or equal to the first part of the correspond row of $R$.
\item The first part of row $i$ of $R$ is greater than the first part of row $i+1$ of $L$.
\end{enumerate}
The weight of $R$ is as follows:
\begin{equation} 
\W_{r}( R ) = w^{\# \text{ of } i \text{ with } \ell(R_i) = \ell(L_i)-1}  \prod_{i=1}^n (u X_i)^{\# \text{ of $i$ in } R},   
\end{equation} 
where  $\ell(L_i), \ell(R_i)$ denote the lengths of rows $i$ in $L$ and $R$, respectively. As for the weight of $L$, let $L_i$ denote the set of elements in the $i$-th row of $L$, and $L^c_i$ its complement in $\{1,\ldots,\max(L_i)\}$. 
Then we set 
\begin{equation} 
\W_{l}(L) = \prod_{i} \prod_{j \in L_i} X_j v^{-1} \prod_{j \in L^c_i} (X_j v^{-1} w + 1).
\end{equation} 

Again, we would prefer monomials as weights and that can actually be achieved by using set-valued RSSPP: a \emph{set-valued RSSPP} is a filling of a shifted Young diagram with non-empty sets of positive integers such that 
\begin{itemize} 
\item there is a strict decrease along rows in the sense that, for two adjacent cells in a row, all elements in the left cell are greater than all elements in the right cell, and 
\item there is a weak decrease down columns in the sense that the maximal elements of the sets decrease.
\end{itemize}  

\begin{definition} 
\label{DPPshape}
An ($n$-)DPP pair is a pair of a set-valued RSSPP $L$ and a CSSPP $R$ with parts in $\{1,2,\ldots,n\}$ such that the following conditions are fulfilled:
\begin{enumerate}
\item The number of cells in row $i$ of $L$ is equal to or one greater than the number of cells in row $i$ of $R$.
\item The maximum part of row $i$ of $L$ is greater than or equal to the maximum part of row $i$ of $R$.
\item The maximum part of row $i$ of $R$ is greater than the maximum part of row $i+1$ of $L$.
\end{enumerate}
The weight of $R$ is as follows:
\begin{equation} 
\W_{r}( R ) = w^{\# \text{ of } i \text{ with } \ell(R_i) = \ell(L_i)-1}  \prod_{i=1}^n (u X_i)^{\# \text{ of $i$ in } R},   
\end{equation} 
where  $\ell(L_i), \ell(R_i)$ denote the lengths of rows $i$ in $L$ and $R$, respectively. As for the weight of $L$, let $L_i$ denote the set of elements in the $i$-th row of $L$, then we set 
\begin{equation} 
\W_{l}(L) = \prod_{i} w^{|L_i| - 
\ell(L_i)}   \prod_{j \in L_i} X_j v^{-1}.
\end{equation} 
The total weight of the DPP pair $(L,R)$ is then 
\begin{equation} 
\W(L,R) = v^{\binom{n+1}{2}} \cdot \W_{l}(L) \cdot \W_{r} ( R ).
\end{equation} 
\end{definition}

Alternatively, we can combine an RSSPP $L$ and an CSSPP $R$ satisfying the conditions as described in the beginning of this subsection 
 into a single column strict plane partition: reflecting $L$ along the main diagonal and merge the reflected $L$ and $R$ into a single CSPP such that the first cell of row $i$ of $L$ (before reflecting) is left of the first cell of row $i$ of $R$ and below the first cell of row $i-1$ of $R$. In our example we obtain (with the entries in $L$ underlayed with blue) the following CSPP.
\begin{center}
		\begin{tikzpicture}[scale=0.7]
		\draw[fill=cyan!25!white] (0,0) -- (1,0) -- (1,-1) -- (2,-1) -- (2,-2) -- (3,-2) -- (3,-3) -- (4,-3) -- (4,-5) -- (2,-5) -- (2,-6) -- (1,-6) -- (1,-7) -- (0,-7) -- (0,0);
		\foreach \x in {1,...,8}
			\draw (\x-1,0) rectangle (\x,-1);
		\foreach \x in {1,...,6}
			\draw (\x-1,-1) rectangle (\x,-2);
		\foreach \x in {1,...,6}
			\draw (\x-1,-2) rectangle (\x,-3);
		\foreach \x in {1,...,6}
			\draw (\x-1,-3) rectangle (\x,-4);
		\foreach \x in {1,...,4}
			\draw (\x-1,-4) rectangle (\x,-5);
		\foreach \x in {1,...,2}
			\draw (\x-1,-5) rectangle (\x,-6);
		\foreach \x in {1,...,1}
			\draw (\x-1,-6) rectangle (\x,-7);
		\node at ( .5,-.5) {$8$};
		\node at (1.5,-.5) {$8$};
		\node at (2.5,-.5) {$8$};
		\node at (3.5,-.5) {$7$};
		\node at (4.5,-.5) {$7$};
		\node at (5.5,-.5) {$6$};
		\node at (6.5,-.5) {$4$};
		\node at (7.5,-.5) {$1$};
		\node at ( .5,-1.5) {$7$};
		\node at (1.5,-1.5) {$7$};
		\node at (2.5,-1.5) {$7$};
		\node at (3.5,-1.5) {$6$};
		\node at (4.5,-1.5) {$5$};
		\node at (5.5,-1.5) {$5$};
		\node at ( .5,-2.5) {$6$};
		\node at (1.5,-2.5) {$6$};
		\node at (2.5,-2.5) {$5$};
		\node at (3.5,-2.5) {$4$};
		\node at (4.5,-2.5) {$4$};
		\node at (5.5,-2.5) {$4$};
		\node at ( .5,-3.5) {$5$};
		\node at (1.5,-3.5) {$4$};
		\node at (2.5,-3.5) {$3$};
		\node at (3.5,-3.5) {$3$};
		\node at (4.5,-3.5) {$3$};
		\node at (5.5,-3.5) {$2$};	
		\node at ( .5,-4.5) {$3$};
		\node at (1.5,-4.5) {$2$};
		\node at (2.5,-4.5) {$2$};
		\node at (3.5,-4.5) {$1$};
		\node at ( .5,-5.5) {$2$};
		\node at (1.5,-5.5) {$1$};
		\node at ( .5,-6.5) {$1$};	
		\end{tikzpicture}
\end{center}		

\begin{definition}
\label{BCSPP}
 A BCSPP $D=(d_{i,j})$ of order $n$ is a CSPP with parts in $\{1,2,\ldots,n\}$ whose shape $\lambda$ is near-balanced. 

To define the weight, let for each column $j$, $S_j$ denote the integers in $\{1,2,\ldots,d_{j,j}\}$ that are \emph{not} in column $j$ below the main diagonal.  Then the weight is defined as follows.
\begin{multline} 
\W(D) = \W(\lambda) \cdot u^{\# \text{of cells strictly above the main diagonal}} \cdot v^{\binom{n+1}{2} -\# \text{of cells on and below the main diagonal}} \\
\times \prod_{i,j}^n X_{d_{i,j}}  \prod_{j} \prod_{e \in S_j} (X_e v^{-1} w +1) 
\end{multline} 
\end{definition} 

Again, it is more desirable to have objects whose weights are monomials in $u,v,w,X_1,\ldots,X_n$, and this finally led to Definition~\ref{PP}.

\section{Proof of Theorem~\ref{robbins}}
\label{constantterm}

This section is devoted to the proof of Theorem~\ref{robbins}. The proof is by induction with respect to $n$ and uses the recursion ``underlying'' arrowed monotone triangles. More specifically, observe that the deletion of the bottom row of an arrowed monotone triangle leads to an arrowed monotone triangle with one row less. The recursion follows when fixing the bottom row of the bigger arrowed monotone triangle and reading off the conditions for the penultimate row.

For this inductive proof, it is best to work with the Jacobi-Trudi formula for the extended Schur polynomial, and, therefore, we start by
providing the Jacobi-Trudi-type determinant for the extended Schur polynomials $s_{(k_n,k_{n-1},\ldots,k_1)}(X_1,\ldots,X_n)$. 
We define 
an extension of \emph{complete homogeneous symmetric functions} to negative parameters: As usual we have for non-negative integers  $k$  
$$
h_k(X_1,\ldots,X_n) = \sum_{l_1+\dots+l_n=k, l_i \ge 0} X_1^{l_1} X_2^{l_2} \cdots X_n^{l_n},  
$$
while for negative $k$, we define 
$$
h_k(X_1,\ldots,X_n) = (-1)^{n+1} \sum_{l_1+\dots+l_n=k, l_i < 0} X_1^{l_1} X_2^{l_2} \cdots X_n^{l_n}.
$$
In particular, it follows that $h_k(X_1,\ldots,X_n)=0$ if $-n < k < 0$, since then the range of the summation is empty. We then have the following reciprocity for all $k$.
$$
h_k(X_1,\ldots,X_n) =(-1)^{n+1}X_1^{-1}\cdots X_n^{-1}h_{-k-n}(X_1^{-1},\ldots,X_n^{-1})
$$
It can be checked\footnote{This can be done using Lemma~\ref{dettodet}. In fact, one obtains the determinant \eqref{i} that way, but by the indicated column operations the two determinants are equal.
} 
that the Jacobi-Trudi-type determinant for the extended Schur polynomials reads as follows:
\begin{equation} 
s_{(k_n,\ldots,k_1)}(X_1,\ldots,X_n) = \det_{1 \le i, j \le n} \left( h_{k_i+i-j}(X_1,\ldots,X_n) \right)
\end{equation} 
Applying elementary column operations based on the identity 
\begin{equation}
\label{hrec} 
h_k(X_1,\ldots,X_n) = h_k(X_1,\ldots,\widehat{X_i},\ldots,X_n) + X_i \cdot h_{k-1}(X_1,\ldots,X_n),
\end{equation} 
where $\widehat{X_i}$ indicates that $X_i$ is missing,
it is easy to see that also  
\begin{equation} 
\label{i} 
s_{(k_n,\ldots,k_1)}(X_1,\ldots,X_n) = \det_{1 \le i, j \le n} \left( h_{k_i+i-j}(X_{n-j+1},\ldots,X_n) \right).
\end{equation} 
A combinatorial interpretation of $s_{(k_n,\ldots,k_1)}(X_1,\ldots,X_n)$ as generating function is provided by the subset of arrowed monotone triangle with bottom row $k_1,\ldots,k_n$ with all entries being decorated with $\emptyset$. This corresponds to setting 
\begin{equation} 
\label{special}
\wne =  \wnw = \wnenw = 0   \qquad \text{and} \qquad \wnone=1.
\end{equation}
In the special case $0 \le k_1 \le k_2 \le \ldots \le k_n$, these objects are known as Gelfand-Tsetlin patterns (which are in easy bijection with semistandard tableaux), however, the important thing to note is that we now have an interpretation as signed objects for all $(k_1,\ldots,k_n) \in \mathbb{Z}^n$. The fact that the generating function is given by the extended Schur polynomial is a consequence of Theorem~\ref{robbins}.

Let $\alpha(k_1,\ldots,k_n;X_1,\ldots,X_n)$ denote the generating function of arrowed monotone triangles with bottom row $k_1,\ldots,k_n$, as considered in Theorem~\ref{robbins}.
The proof is by induction with respect to $n$. We have 
$$
\alpha(k_1;X_1) = (\wne \cdot X_1 + \wnw \cdot X_1^{-1} + \wnenw + \wnone) \cdot X_1^{k_1},
$$
and this coincides with the formula. We set 
\begin{equation} 
V_{p,q,n}=  \wne \cdot X_n \e_{q} + \wnw \cdot X_n^{-1} \e_{p}^{-1}  + 
\wnenw \cdot  \e_{p}^{-1} \e_{q} + \wnone \cdot \id
\end{equation} 
and have the following recursion, which is immediate from the definition.
\begin{multline}
\label{recursion}
\alpha(k_1,\ldots,k_n;X_1,\ldots,X_n) = X_n^{k_1+\ldots+k_n} \\
\times \left[ V_{k_1,k'_1,n} V_{k_2,k'_2,n} \cdots V_{k_n,k'_n,n} \sum_{l_1=k'_1}^{k_2} \sum_{l_2=k'_2}^{k_3} \ldots \sum_{l_{n-1}=k'_{n-1}}^{k_n}
X_n^{-l_1-l_2-\ldots-l_{n-1}} \alpha(l_1,\ldots,l_{n-1};X_1,\ldots,X_{n-1}) \right]_{k_i=k_i'}
\end{multline} 
Indeed, if we delete the bottom row of an arrowed monotone triangle, we obtain an arrowed monotone triangle with one row less. Moreover, 
$\wne \cdot X_n \e_{k_i'}$ in $V_{k_i,k'_i,n}$ corresponds to decorating $k_i$ with $\nearrow$, 
$\wnw \cdot X_n^{-1} \e_{k_i}^{-1}$ corresponds to decorating $k_i$ with $\nwarrow$, 
$\wnenw \cdot  \e_{k_i}^{-1} \e_{k'_i}$ corresponds to decorating $k_i$ with $\nenwarrow$, and 
$\wnone \cdot \id$ corresponds to decorating $k_i$ with $\emptyset$.

Here and in the following $\sum\limits_{i=a}^{b} f(i)$ has to be interpreted as $\sum\limits_{i \in \si{a}{b}} f(i)$.
Using the induction hypothesis, the right-hand side is equal to 
\begin{multline*}
X_n^{k_1+\ldots+k_n} \left[ V_{k_1,k'_1,n} V_{k_2,k'_2,n} \cdots V_{k_n,k'_n,n} \sum_{l_1=k'_1}^{k_2} \sum_{l_2=k'_2}^{k_3} \ldots \sum_{l_{n-1}=k'_{n-1}}^{k_n}  
X_n^{-l_1-l_2-\ldots-l_{n-1}} \right. \\
\times \prod_{i=1}^{n-1} (\wne \cdot X_i + \wnw \cdot X_i^{-1} + \wnenw + \wnone) \\ 
\prod_{1 \le p < q \le n-1} \left( \wne \cdot \e_{l_p} + \wnw \cdot \e_{l_q}^{-1} + \wnenw \cdot \e_{l_p} \e_{l_q}^{-1} + \wnone \cdot \id  \right) \\  \left. 
\det_{1 \le i, j \le n-1} \left( h_{l_i+i-j}(X_{n-j},\ldots,X_{n-1}) \right) \right]_{k_i=k'_i}. 
\end{multline*}
We define the following difference operator $\Delta_{k,X}$:
\begin{equation} 
\Delta_{k,X} p(k) := p(k) - X \cdot p(k-1)
\end{equation} 
Note that 
\begin{equation}
\label{diff}
\Delta_{k,X_i} h_k(X_1,\ldots,X_n) = h_k(X_1,\ldots,X_n) - X_i \cdot h_{k-1}(X_1,\ldots,X_n) = 
h_k(X_1,\ldots,\widehat{X_i},\ldots,X_n)
\end{equation}
and 
\begin{equation}
\label{sum}
\sum_{l_i=k_i}^{k_{i+1}}  X_n^{-l_i} \Delta_{l_i,X_n} p(l_i) = \sum_{l_i=k_i}^{k_{i+1}}  \left( X_n^{-l_i} p(l_i) - X_n^{-l_i+1} p(l_i-1) \right) = X_n^{-k_{i+1}} p(k_{i+1}) - X_n^{-k_i+1} p(k_i-1).
\end{equation}
Using 
$$
\det_{1 \le i, j \le n-1} \left( h_{l_i+i-j}(X_{n-j},\ldots,X_{n-1}) \right) = 
\Delta_{l_1,X_n} \Delta_{l_2,X_n} \cdots \Delta_{l_{n-1} ,X_n} \det_{1 \le i, j \le n-1}  \left( h_{l_i+i-j}(X_{n-j},\ldots,X_{n}) \right),
$$
we obtain 
\begin{multline*}
X_n^{k_1+\ldots+k_n} \prod_{i=1}^{n-1} (\wne \cdot X_i + \wnw \cdot X_i^{-1} + \wnenw + \wnone) \\ 
\times \left[ V_{k_1,k'_1,n} V_{k_2,k'_2,n} \cdots V_{k_n,k'_n,n} \sum_{l_1=k'_1}^{k_2} \sum_{l_2=k'_2}^{k_3} \ldots \sum_{l_{n-1}=k'_{n-1}}^{k_n}  X_n^{-l_1-l_2-\ldots-l_{n-1}} \right. \\ 
\left. \phantom{\sum_{i=a}^{b}}  \Delta_{l_1,X_n} \Delta_{l_2,X_n} \cdots \Delta_{l_{n-1} ,X_n}  \widehat{\alpha}(l_1,\ldots,l_{n-1};X_1,\ldots, X_{n}) \right]_{k_i=k'_i}
\end{multline*}
with 
\begin{multline*}
\widehat{\alpha}(l_1,\ldots,l_{n-1};X_1,\ldots, X_{n})  
= \prod_{1 \le p < q \le n-1} \left( \wne \cdot \e_{l_p} + \wnw \cdot \e_{l_q}^{-1} + \wnenw \cdot \e_{l_p} \e_{l_q}^{-1} + \wnone \cdot \id  \right) \\  \det_{1 \le i, j \le n-1} \left( h_{l_i+i-j}(X_{n-j},\ldots,X_{n}) \right).
\end{multline*}
We now apply \eqref{sum} and obtain the following. 
\begin{multline}
\label{zwischen}
X_n^{k_1+\ldots+k_n} \prod_{i=1}^{n-1} (\wne \cdot X_i + \wnw \cdot X_i^{-1} + \wnenw + \wnone) \\ 
\times \left[ V_{k_1,k'_1,n} V_{k_2,k'_2,n} \cdots V_{k_n,k'_n,n} \sum_{(l_1,\ldots,l_{n-1}): \atop l_{i}=k'_i-1 \text{ or } l_{i}=k_{i+1}} (-1)^{\# i: \, l_i = k'_i-1} X_n^{-l_1-l_2-\ldots-l_{n-1}}  \widehat{\alpha}(l_1,\ldots,l_{n-1};X_1,\ldots, X_{n}) \right]_{k_i=k'_i}
\end{multline}
The sum over the $(l_1,\ldots,l_{n-1})$ has $2^{n-1}$ terms. Next we argue that the summands with $l_{i-1}=k_i, l_i=k'_i-1$ for at least one $i$ evaluate to zero after applying $V_{k_i,k'_i,n}$ and setting $k_i=k'_i$ afterwards. We set 
$$
\widehat{\beta}(l_1,\ldots,l_{n-1};X_1,\ldots, X_{n}) = \det_{1 \le i, j \le n-1} \left( h_{l_i+i-j}(X_{n-j},\ldots,X_{n}) \right), 
$$
and using $X^{-l} \e^{\pm 1}_l a(l)=X^{-l}  a(l \pm 1) = X^{\pm 1} \e^{\pm 1}_l X^{-l} a(l)$, it follows that  
\begin{multline*} 
X_n^{-l_1-l_2-\ldots-l_{n-1}}  \widehat{\alpha}(l_1,\ldots,l_{n-1};X_1,\ldots, X_{n-1}) \\
= \prod_{1 \le p < q \le n-1} \left( \wne \cdot  X_n \e_{l_p} + \wnw \cdot X_n^{-1} \e_{l_q}^{-1} + \wnenw \cdot \e_{l_p} \e_{l_q}^{-1} + \wnone \cdot \id  \right) \\ 
X_n^{-l_1-l_2-\ldots-l_{n-1}} \widehat{\beta}(l_1,\ldots,l_{n-1};X_1,\ldots, X_{n}) \\
= \prod_{1 \le p < q \le n-1} V_{l_q,l_p,n}  X_n^{-l_1-l_2-\ldots-l_{n-1}} \widehat{\beta}(l_1,\ldots,l_{n-1};X_1,\ldots, X_{n}).
\end{multline*} 
We consider a summand in \eqref{zwischen} with $l_{i-1}=k_i$ and $l_i=k'_i-1$.
Up to some unimportant factors and specializations (we omit the first line of \eqref{zwischen}, $V_{k_j,k_j'}$ for $j \not= i$, the sign 
$(-1)^{\# j: \, l_j = k'_j-1}$ as well as the specialization $k_j=k'_j$ for $j \not=i$), such a summand in \eqref{zwischen} can also be written as 
\begin{multline*}
\left[ V_{l_{i-1},l_i,n} \prod_{1 \le p < q \le n-1} V_{l_q,l_p,n}  X_n^{-l_1-l_2-\ldots-l_{n-1}} \widehat{\beta}(l_1,\ldots,l_{n-1};X_1,\ldots, X_{n})  \right]_{l_{i-1}=k_i,l_i=k_i-1}  \\
= \left[ X_n V_{l_{i-1},l_i,n} V_{l_i,l_{i-1},n} \prod_{1 \le p < q \le n-1 \atop (p,q) \not= (i-1,i)} V_{l_q,l_p,n}  X_n^{-l_1-l_2-\ldots-l_{n-1}} \e_{l_i}^{-1} \widehat{\beta}(l_1,\ldots,l_{n-1};X_1,\ldots, X_{n}) \right]_{l_{i-1}=l_i=k_i}.
\end{multline*}
(In the first line, the crucial observation is that we can replace $V_{k_i,k_i',n}$ from \eqref{zwischen} with $V_{l_{i-1},l_i,n}$, taking $l_{i-1}=k_i$ and $l_{i}=k'_{i}-1$ into account and the specialization.)
The summand vanishes since $\e_{l_i}^{-1} \widehat{\beta}(l_1,\ldots,l_{n-1};X_1,\ldots, X_{n})$ is asymmetric in $l_{i-1},l_i$ and 
$$V_{l_{i-1},l_i,n} V_{l_i,l_{i-1},n} \prod\limits_{1 \le p < q \le n-1 \atop (p,q) \not= (i-1,i)} V_{l_q,l_p,n}$$ 
is symmetric in $l_{i-1},l_i$ as this implies that the function in brackets is also asymmetric. It follows that \eqref{zwischen} is equal to 
\begin{multline*}
X_n^{k_1+\ldots+k_n} \prod_{i=1}^{n-1} (\wne \cdot X_i + \wnw \cdot X_i^{-1} + \wnenw + \wnone) \left[ V_{k_1,k'_1,n} V_{k_2,k'_2,n} \cdots V_{k_n,k'_n,n}  \phantom{\sum_{i=a}^b} \right. \\ \left. \sum_{i=1}^n (-1)^{i-1} X_n^{-k'_1-\ldots-k'_{i-1}-k_{i+1}-\ldots-k_n+i-1}  \widehat{\alpha}(
k_1^\prime-1,\ldots,k_{i-1}^\prime-1,k_{i+1},\ldots,k_n;X_1,\ldots, X_{n}) \right]_{k'_j=k_j}.
\end{multline*}
In this expression, the $i$-th summand is independent of $k_i,k_i'$, and the action of $V_{k_i,k'_i}$ is then simply multiplication with 
$\wne \cdot X_n + \wnw \cdot X_n^{-1} + \wnenw + \wnone$, which is due to its independence of $i$ an overall factor and
fits as last factor within the product in the front. Moreover, 
the $i$-th summand is independent of $k_j$ for $j<i$, and so we may replace $V_{k_j,k'_j}$ by $V_{k_i,k_j'}$ (since the summand is also independent of $k_i$), and for a similar reason we can replace $V_{k_j,k'_j}$ by $V_{k_j,k_i}$ for $j>i$.
We can specialize $k_j=k'_j$ now, and obtain
\begin{multline*} 
 X_n^{k_1+\ldots+k_n} \prod_{i=1}^{n} (\wne \cdot X_i + \wnw \cdot X_i^{-1} + \wnenw + \wnone)  
 \\ \times  \sum_{i=1}^n (-1)^{i-1} \prod_{j=1}^{i-1} V_{k_i,k_j} \prod_{j=i+1}^{n} V_{k_j,k_i}  X_n^{-k_1-\ldots-k_{i-1}-k_{i+1}-\ldots-k_n+i-1}  \\
 \times \widehat{\alpha}(k_1-1,\ldots,k_{i-1}-1,k_{i+1},\ldots,k_n;X_1,\ldots, X_{n}).
 \end{multline*} 
 We use $X^{-l} \e^{\pm 1}_l a(l)=X^{-l}  a(l \pm 1) = X^{\pm 1} \e^{\pm 1}_l X^{-l} a(l)$ again to arrive at
 \begin{multline*} 
 \prod_{i=1}^{n} (\wne \cdot X_i + \wnw \cdot X_i^{-1} + \wnenw + \wnone)  \\ 
 \prod_{1 \le p < q \le n} \left( \wne \cdot \e_{k_p} + \wnw \cdot \e_{k_q}^{-1} + \wnenw \cdot \e_{k_p} \e_{k_q}^{-1} + \wnone \cdot \id  \right) \\
 \sum_{i=1}^n (-1)^{i-1} X_n^{k_i+i-1} \widehat{\beta}(k_1-1,\ldots,k_{i-1}-1,k_{i+1},\ldots,k_{n};X_1,\ldots, X_{n}).
\end{multline*}
Now 
$$
\sum_{i=1}^n (-1)^{i-1} X_n^{k_i+i-1} \widehat{\beta}(k_1-1,\ldots,k_{i-1}-1,k_{i+1},\ldots,k_{n};X_1,\ldots, X_{n}) = 
\det_{1 \le i, j \le n} \left( h_{k_i+i-j}(X_{n-j+1},\ldots,X_n) \right)
$$
follows from expanding the determinant with respect to the first column. This concludes the proof of Theorem~~\ref{robbins}.

\section{Further generalization of Theorem~\ref{robbins0}}
\label{decorated}

There is the following straightforward generalization of arrowed monotone triangles: A \emph{generalized extended arrowed monotone triangle} (AMT) is a triangular array of the following form
$$
\begin{array}{ccccccccccccccccc}
  &   &   &   &   &   &   &   & a_{1,1} &   &   &   &   &   &   &   & \\
  &   &   &   &   &   &   & a_{2,1} &   & a_{2,2} &   &   &   &   &   &   & \\
  &   &   &   &   &   & \dots &   & \dots &   & \dots &   &   &   &   &   & \\
  &   &   &   &   & a_{n-2,1} &   & \dots &   & \dots &   & a_{n-2,n-2} &   &   &   &   & \\
  &   &   &   & a_{n-1,1} &   & a_{n-1,2} &  &   \dots &   & \dots   &  & a_{n-1,n-1}  &   &   &   & \\
  &   &   & a_{n,1} &   & a_{n,2} &   & a_{n,3} &   & \dots &   & \dots &   & a_{n,n} &   &   &
\end{array},
$$
where each $a_{i,j}$ is an integer decorated with an element from $\{\nenwarrowij{s}{t} | s,t \in \mathbb{Z} \}$ and the following is satisfied for each integer $a$ not in the bottom row: Suppose $b$ is the 
southwest-neighbor and $c$ is the 
southeast-neighbor, and $\dec(b) = \nenwarrowij{s_b}{t_b}$ and $\dec(c) = \nenwarrowij{s_c}{t_c}$, then we require
$a \in \si{b+t_b}{c-s_c}$. Extended arrowed monotone triangles are the subclass of generalized extended arrowed monotone triangles 
with possible decorations $\nenwarrowij{s}{t}$ with $s,t \in \{0,1\}$. 

Suppose we are given arbitrary weights 
$\omega(\nenwarrowij{s}{t})$ for all $s,t \in \mathbb{Z}$, and, moreover, we define $\alpha(\nenwarrowij{s}{t})=t-s$.
 Then the weight of a given generalized extended arrowed monotone triangle 
$A=(a_{i,j})_{1 \le j \le i \le n}$
is 
\begin{equation}
\W(A) = \prod_{p,q \in \mathbb{Z}} \omega(\nenwarrowij{p}{q})^{\# \text{ of } \nenwarrowijs{p}{q}}
\prod_{i=1}^{n} X_i^{\sum_{j=1}^i a_{i,j}  - \sum_{j=1}^{i-1} a_{i-1,j} + \sum_{j=1}^i \alpha(\dec(a_{i,j}))}.
\end{equation}

Theorem~\ref{robbins} has the following extension. 

\begin{theorem} 
\label{robbinsgen}
Suppose the weights $\omega(\nenwarrowij{p}{q})=0$, for all but finitely many $p,q \in \mathbb{Z}$.
The generating function of generalized extended arrowed monotone triangles with bottom row $k_1,\ldots,k_n$ is 
\begin{equation}
\prod_{i=1}^{n} \sum_{s,t \in \mathbb{Z}} \omega(\nenwarrowij{s}{t}) X_i^{t-s}  \\ 
\prod_{1 \le p < q \le n} \left( \sum_{s,t \in \mathbb{Z}} \omega(\nenwarrowij{s}{t}) \e_{k_p}^t \e_{k_q}^{-s} \right)
s_{(k_n,k_{n-1},\ldots,k_1)}(X_1,\ldots,X_n). 
\end{equation}
\end{theorem}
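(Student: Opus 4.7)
The plan is to run induction on $n$ following the argument in Section~\ref{constantterm} verbatim, with the four-term operator $V_{p,q,n}$ used there replaced by its natural generalisation
\begin{equation*}
V_{p,q,n} \;=\; \sum_{s,t \in \mathbb{Z}} \omega(\nenwarrowij{s}{t}) \, X_n^{t-s} \, \e_p^{-s} \e_q^{t},
\end{equation*}
which is a finite linear combination of shift operators by the support hypothesis on $\omega$. Specialising to $(s,t)\in\{0,1\}^2$ recovers the four summands $\wne X_n \e_q$, $\wnw X_n^{-1}\e_p^{-1}$, $\wnenw \e_p^{-1}\e_q$, $\wnone \id$ of Section~\ref{constantterm}. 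Combinatorially, the summand indexed by $(s,t)$ records the choice of decoration $\nenwarrowij{s}{t}$ for the bottom-row entry with southwest-neighbor $b$ (in column $p$) and southeast-neighbor $c$ (in column $q$), contributing $\omega(\nenwarrowij{s}{t}) X_n^{t-s}$ to the weight and enforcing the constraint $a \in \si{b+t}{c-s}$ for the corresponding penultimate-row entry through the shift $\e_p^{-s}\e_q^{t}$. Hence the recursion \eqref{recursion} remains valid with this generalised $V_{p,q,n}$.

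For the inductive step I would follow the template of Section~\ref{constantterm} line by line: insert the induction hypothesis, write the Schur polynomial via the Jacobi-Trudi determinant \eqref{i}, produce a factor $\Delta_{l_1,X_n}\cdots\Delta_{l_{n-1},X_n}$ using \eqref{diff}, collapse the iterated telescoping sums via \eqref{sum}, and expand the resulting determinant along its first column after the commutation identities $X_n^{-l}\e_l^{\pm 1} = X_n^{\pm 1}\e_l^{\pm 1} X_n^{-l}$. The per-row factor $\wne X_i + \wnw X_i^{-1} + \wnenw + \wnone$ of Theorem~\ref{robbins} is then replaced throughout by $\sum_{s,t}\omega(\nenwarrowij{s}{t}) X_i^{t-s}$, and each $p<q$ factor by $\sum_{s,t}\omega(\nenwarrowij{s}{t}) \e_{k_p}^{t}\e_{k_q}^{-s}$, matching the claimed formula. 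The base case $n=1$ is immediate from $s_{(k_1)}(X_1)=X_1^{k_1}$.

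The only point that needs a genuinely new verification is the antisymmetry cancellation of the boundary summands with $l_{i-1}=k_i$ and $l_i=k'_i-1$, which requires $V_{l_{i-1},l_i,n} V_{l_i,l_{i-1},n}$ to be symmetric in $l_{i-1},l_i$. A direct expansion of the generalised operator gives
\begin{equation*}
V_{p,q,n} V_{q,p,n} \;=\; \sum_{s,t,s',t'} \omega(\nenwarrowij{s}{t}) \omega(\nenwarrowij{s'}{t'}) \, X_n^{(t-s)+(t'-s')} \, \e_p^{t'-s} \e_q^{t-s'},
\end{equation*}
which is manifestly invariant under the simultaneous relabelling $p \leftrightarrow q$, $(s,t)\leftrightarrow(s',t')$. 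This symmetry check, together with the bookkeeping needed to confirm that the signed-interval conventions of Section~\ref{constantterm} still make sense when $\si{b+t}{c-s}$ becomes negative for decorations with large $|s|,|t|$, is the main (and essentially only) obstacle beyond mechanical transcription; everything else reuses Section~\ref{constantterm} with no modification.
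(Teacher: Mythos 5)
Your proposal is correct and is essentially the paper's own (omitted) argument: the paper proves this theorem only by declaring it ``analogous to the proof of Theorem~\ref{robbins}'', and you have carried out exactly that analogy, identifying the right generalized operator $V_{p,q,n}=\sum_{s,t}\omega(\nenwarrowij{s}{t})X_n^{t-s}\e_p^{-s}\e_q^{t}$ and verifying the one step that genuinely needs rechecking, namely the symmetry of $V_{p,q,n}V_{q,p,n}$ under $p\leftrightarrow q$ that drives the antisymmetry cancellation.
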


The proof of this theorem is analogous to the proof of Theorem~\ref{robbins} and thus omitted in this paper.

\section{From the case $(k_1,\ldots,k_n)=(1,2,\ldots,n)$ to DPPs} 
\label{proofmain2}

The purpose of this section is to complete the proof of Theorem~\ref{main2}. The main ingredient is the generating function from Theorem~\ref{main1} for the special case $(k_1,\ldots,k_n)=(1,2,\ldots,n)$. Roughly speaking, we apply several algebraic manipulations until we arrive at a determinant that we can interpret combinatorially as a weighted count of non-intersecting lattice paths that we have encountered in Section~\ref{extendedpaths}. These manipulations are structured as follows.
\begin{itemize} 
\item In Section~\ref{optoasym}, we transform \eqref{opX} from Theorem~\ref{main1} for the special case $(k_1,\ldots,k_n)=(1,2,\ldots,n)$ into an antisymmetrizer expression.
\item In Section~\ref{asymtobia}, we use Lemma~\ref{general} to transform the antisymmetrizer expression into a bialternant.
\item In Section~\ref{biatojacobitrudi}, we use Lemma~\ref{dettodet} to transform the bialternant into a Jacobi-Trudi-type determinant.
\item In Section~\ref{further}, we multiply the matrix underlying the determinant with a matrix that has an easy-to-compute determinant which just cancels the prefactor to arrive at the final determinant.
\item In Section~\ref{GVlast}, we interpret this final determinant combinatorally using the Lindstr\"om-Gessel-Viennot lemma and arrive at a weighted count of the non-intersecting lattice paths from Section~\ref{extendedpaths}.
\end{itemize}

\subsection{From the operator formula to an antisymmetrizer expression}
\label{optoasym} 
Using the antisymmetrizer, defined as 
\begin{equation} 
\asym_{X_1,\ldots,X_n} \left[ f(X_{1},\ldots,X_{n}) \right] = \sum_{\sigma \in {\mathcal S}_n}  \sgn \sigma 
f(X_{\sigma(1)},\ldots,X_{\sigma(n)}), 
\end{equation} 
we rewrite the expression from Theorem~\ref{robbins} for $\wnone=0, \wne = u, \wnw = v, \wnenw = w$ and obtain
\begin{multline}
\prod_{i=1}^{n} (u  X_i + v  X_i^{-1}+w) \prod_{1 \le p < q \le n} \left( u  \e_{k_p} + v  \e_{k_q}^{-1} + w  \e_{k_p} \e_{k_q}^{-1} \right) 
s_{(k_n,k_{n-1},\ldots,k_1)}(X_1,\ldots,X_n) \\
= \prod_{i=1}^{n} (u  X_i + v  X_i^{-1}+w) \prod_{1 \le p < q \le n} \left( u  \e_{k_p} + v  \e_{k_q}^{-1} + w  \e_{k_p} \e_{k_q}^{-1} \right)
\frac{\asym_{X_1,\ldots,X_n} \left[ \prod_{i=1}^{n} X_i^{k_i+i-1} \right] }{\prod_{1 \le i < j \le n} (X_j - X_i)} \\ 
= \prod_{i=1}^{n} (u  X_i + v  X_i^{-1}+w)  \frac{\asym_{X_1,\ldots,X_n} \left[ \prod_{1 \le p < q \le n} \left( u  \e_{k_p} + v  \e_{k_q}^{-1} + w  \e_{k_p} \e_{k_q}^{-1} \right) \prod_{i=1}^{n} X_i^{k_i+i-1} \right] }{\prod_{1 \le i < j \le n} (X_j - X_i)} \\ 
= \prod_{i=1}^{n} (u  X_i + v  X_i^{-1}+w) \frac{\asym_{X_1,\ldots,X_n} \left[ \prod_{1 \le p <  q \le n} \left( u X_p  + v  X_q^{-1} + w  X_p X_q^{-1} \right) \prod_{i=1}^{n} X_i^{k_i+i-1} \right] }{\prod_{1 \le i < j \le n} (X_j - X_i)} \\
= \prod_{i=1}^{n} (u  X_i + v  X_i^{-1}+w) \frac{\asym_{X_1,\ldots,X_n} \left[ \prod_{1 \le p < q \le n} \left( u X_p X_q + v   + w  X_p  \right) \prod_{i=1}^{n} X_i^{k_i} \right] }{\prod_{1 \le i < j \le n} (X_j - X_i)}  \\
=  \frac{\asym_{X_1,\ldots,X_n} \left[ \prod_{1 \le p \le q \le n} \left( u  X_q + v  X_p^{-1}  + w  \right) \prod_{i=1}^{n} X_i^{k_i+n-i} \right] }{\prod_{1 \le i < j \le n} (X_j - X_i)}.
\end{multline} 

From now on, we will consider the special case $(k_1,k_2,\ldots,k_n) = (1,2,\ldots,n)$, and therefore, we have the following expression.
\begin{equation} 
\prod_{i=1}^n X_i^{n} \frac{\asym_{X_1,\ldots,X_n} \left[ \prod_{1 \le p \le q \le n} \left( u  X_q + v  X_p^{-1}  + w  \right)  \right] }{\prod_{1 \le i < j \le n} (X_j - X_i)}
\end{equation} 

\subsection{From an antisymmetrizer
 to a bialternant}
\label{asymtobia}
We need the following lemma to obtain a determinantal expression.

\begin{lemma}  
\label{general}
Let $n \ge 1$, and $\mathbb{X}=(X_1,\ldots,X_n), \mathbb{Y}=(Y_1,\ldots,Y_n)$ be algebraically independent indeterminants.
 Then
\begin{equation} 
\det_{1 \le i, j \le n} \left( X_i^j - Y_i^j \right) 
 = \widehat{\asym}  \left[  \prod_{1 \le i \le j \le n} (X_j-Y_i)     \right],
\end{equation}
with 
\begin{equation} 
\widehat{\asym} \left[f(\mathbb{X};\mathbb{Y})\right] = \sum_{\sigma \in {\mathcal{S}_n}} \sgn \sigma f(X_{\sigma(1)},\ldots,X_{\sigma(n)};Y_{\sigma(1)},\ldots,Y_{\sigma(n)}).
\end{equation} 
\end{lemma}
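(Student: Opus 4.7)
My plan is to proceed by induction on $n$. The base case $n=1$ is immediate, as both sides equal $X_1-Y_1$. For the inductive step I will show that both sides obey the same recursion
\[
F_n(\mathbb{X};\mathbb{Y}) = \sum_{k=1}^n (-1)^{n-k}\prod_{l=1}^n (X_k-Y_l)\cdot F_{n-1}(\mathbb{X}^{(k)};\mathbb{Y}^{(k)}),
\]
where $\mathbb{X}^{(k)}$, $\mathbb{Y}^{(k)}$ denote the tuples with the $k$-th entry deleted.

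To verify this recursion for the right-hand side $R_n=\widehat{\asym}[\prod_{i\le j}(X_j-Y_i)]$, I would group the summands by the value $\sigma(n)=k$. Writing $\sigma|_{[n-1]}=\pi_k\circ\tilde\sigma$ with $\tilde\sigma\in\mathcal{S}_{n-1}$ and $\pi_k\colon[n-1]\to[n]\setminus\{k\}$ order-preserving, an inversion count yields $\sgn(\sigma)=(-1)^{n-k}\sgn(\tilde\sigma)$. The factors of $\prod_{i\le j}(X_{\sigma(j)}-Y_{\sigma(i)})$ with $j=n$ collapse to $\prod_{l=1}^n(X_k-Y_l)$ (because $\sigma|_{[n-1]}$ bijects with $[n]\setminus\{k\}$ and $\sigma(n)=k$), while the remaining factors become $\prod_{i\le j\le n-1}(X^{(k)}_{\tilde\sigma(j)}-Y^{(k)}_{\tilde\sigma(i)})$; summing over $\tilde\sigma$ produces $R_{n-1}(\mathbb{X}^{(k)};\mathbb{Y}^{(k)})$.

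For the left-hand side $L_n=\det(X_i^j-Y_i^j)_{1\le i,j\le n}$, Laplace expansion along the last column yields $L_n=\sum_k(-1)^{k+n}(X_k^n-Y_k^n)L_{n-1}^{(k)}$, where $L_{n-1}^{(k)}$ is the minor obtained by deleting row $k$ and column $n$. To reach the target recursion it suffices to show that the discrepancy
\[
\Sigma := \sum_{k=1}^n (-1)^{k+n}\Bigl[(X_k^n-Y_k^n)-\prod_{l=1}^n(X_k-Y_l)\Bigr] L_{n-1}^{(k)}
\]
vanishes. Observe that $\Sigma$ is itself the Laplace expansion along the last column of the $n\times n$ matrix $N$ whose first $n-1$ columns agree with those of $(X_i^j-Y_i^j)$ and whose last column is $\bigl((X_k^n-Y_k^n)-\prod_l(X_k-Y_l)\bigr)_k$, so $\Sigma=\det N$.

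The main obstacle, and the key computation, is showing $\det N=0$, and my idea is a single sequence of column operations. Using the expansion $\prod_l(X_k-Y_l)=\sum_{m=0}^n(-1)^{n-m}e_{n-m}(Y_1,\ldots,Y_n)X_k^m$, I would add to the last column of $N$ the combination $\sum_{m=1}^{n-1}(-1)^{n-m}e_{n-m}(Y)\cdot C_m$, where $C_m=(X_k^m-Y_k^m)_k$ is the $m$-th column of $N$. A direct term-by-term check (the $X_k$-contributions from the added combination cancel the $X_k$-terms of $X_k^n-Y_k^n-\prod_l(X_k-Y_l)$, while the new $Y_k$-terms precisely assemble into the $Y_k$-polynomial below) shows that the $k$-th entry of the modified last column equals $-\prod_{l=1}^n(Y_k-Y_l)$, which vanishes identically because the factor $Y_k-Y_k=0$ appears. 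Hence $\det N=0$, the recursion for $L_n$ matches that for $R_n$, and the inductive hypothesis $L_{n-1}=R_{n-1}$ closes the argument.
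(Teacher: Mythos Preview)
Your proof is correct and follows the same inductive architecture as the paper's proof: both establish a recursion for each side and then match them. The differences are purely technical. For the right-hand side, you group permutations by $\sigma(n)=k$, pulling out the factor $\prod_l(X_k-Y_l)$, whereas the paper groups by $\sigma(1)=i$, pulling out $\prod_k(X_k-Y_i)$; these are symmetric variants of the same idea. For the left-hand side, the paper uses the identity $\sum_{j}(X_i^j-Y_i^j)\,e_{n-j}(-X_1,\ldots,-X_n)=(-1)^{n-1}\prod_k(X_k-Y_i)$ together with Cramer's rule to identify the determinant obtained by replacing the last column, while you perform explicit column operations with coefficients $(-1)^{n-m}e_{n-m}(Y)$ and observe that the resulting last column is $-\prod_l(Y_k-Y_l)=0$. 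Your approach is slightly more elementary (no linear-system argument), while the paper's Cramer-rule device packages the same computation more succinctly; both ultimately rest on the expansion $\prod_l(t-z_l)=\sum_m(-1)^{n-m}e_{n-m}(z)\,t^m$ and the trivial vanishing $\prod_l(z_k-z_l)=0$.
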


\begin{proof}  The proof is by induction with respect to $n$. The result is obvious for $n=1$. 
Let $L_n(\mathbb{X};\mathbb{Y}), R_n(\mathbb{X};\mathbb{Y})$ denote the left and right-hand side of the identity in the statement, respectively. By the induction hypothesis, we can assume 
$ L_{n-1}(X_1,\ldots,X_{n-1};Y_1,\ldots,Y_{n-1}) \allowbreak = R_{n-1}(X_1,\ldots,X_{n-1};Y_1,\ldots,Y_{n-1})$.
We show that both $L_n(\mathbb{X};\mathbb{Y})$ and $R_n(\mathbb{X};\mathbb{Y})$ can be computed recursively using 
$L_{n-1}(X_1,\ldots,X_{n-1};Y_1,\ldots,Y_{n-1})$ and $R_{n-1}(X_1,\ldots,X_{n-1};Y_1,\ldots,Y_{n-1})$, respectively, with the same recursion.
For the right-hand side, we have 
$$
R_n(\mathbb{X};\mathbb{Y})
=  \sum_{i=1}^{n} (-1)^{i+1}  \left( \prod_{k=1}^n (X_k-Y_i)  \right) 
R_{n-1}(X_1,\ldots,\widehat{X_i},\ldots,X_n;Y_1,\ldots,\widehat{Y_i},\ldots,Y_n),
$$
where $\widehat{X_i}$ and $\widehat{Y_i}$ means that $X_i$ and $Y_i$ are omitted.
In order to deal with the left-hand side, we first observe
\begin{equation}
\label{fundamentalidentity}
\sum_{j=0}^{n} (X_i^j - Y_i^j) e_{n-j}(-X_1,\ldots,-X_n) = (-1)^{n-1} \prod_{k=1}^{n} (X_k-Y_i),
\end{equation}
where $e_{j}(X_1,\ldots,X_n)$ denotes the $j$-th elementary symmetric function. Note that the summand for $j=0$ on the left-hand side is actually $0$, and can therefore be omitted.
Now consider the following system of linear equations with $n$ unknowns $c_j(\mathbb{X};\mathbb{Y})$, $1 \le j \le n$, and 
$n$ equations.
$$
\sum_{j=1}^{n} (X_i^j - Y_i^j)  c_j(\mathbb{X};\mathbb{Y})
= (-1)^{n-1} \prod_{k=1}^{n} (X_k-Y_i), \quad 1 \le i \le n
$$
The determinant of this system of equations is obviously $L_n(\mathbb{X};\mathbb{Y})$, which is non-zero as the rows are linearly independent. By \eqref{fundamentalidentity}, we know that the unique solution of this system is given by 
$
c_j(\mathbb{X};\mathbb{Y}) = e_{n-j}(-X_1,\ldots,-X_n).
$
On the other hand, by Cramer's rule, 
$$
c_n(\mathbb{X};\mathbb{Y}) = \frac{\det \limits_{1 \le i, j \le n} \left( \begin{cases}  X_i^j - Y_i^j, & \text{if $j<n$} \\
  (-1)^{n-1} \prod\limits_{k=1}^{n} (X_k-Y_i) , & \text{if $j=n$}  \end{cases} \right)}{L_n(\mathbb{X};\mathbb{Y})}.
$$
The assertion now follows from $c_n(\mathbb{X};\mathbb{Y}) = e_{0}(-X_1,\ldots,-X_n)=1$ and expanding the determinant in the numerator with respect to the last column.
\end{proof}

Letting $Y_i$ from the lemma be $- v X_i^{-1}$ and $X_j$ be $u X_j + w$, we obtain 
\begin{equation} 
\label{bialternantlast} 
\prod_{i=1}^{n} X_i^{n} \frac{\det_{1 \le i, j \le n} \left( (u X_i + w)^{j} 
- (- v X_i^{-1})^j \right)}{\prod_{1 \le i < j \le n} (X_j - X_i)}.
\end{equation} 

\subsection{From a bialternant to a Jacobi-Trudi-type determinant}
\label{biatojacobitrudi}

We aim at deriving a Jacobi-Trudi-type determinant. In \eqref{bialternantlast}, we add $t$-times the $(n-i)$-th column to the $(n-i+1)$-st column, for 
$i=1,\ldots,n-1$, in this order. We repeat this for $i=1,\ldots,n-2$, then for $i=1,2,\ldots,n-3$, and so on. We obtain
\begin{equation} 
\label{lr}
\prod_{i=1}^n X_i^n \frac{\det_{1 \le i, j \le n} \left( (u X_i + w+t)^{j-1} (u X_i + w) + 
  (- v X_i^{-1}+t)^{j-1} v X_i^{-1}  \right)}{\prod_{1 \le i < j \le n} (X_j - X_i)}, 
\end{equation} 
and set $t=-w$. 

The general procedure we use to obtain a Jacobi-Trudi-type determinant is provided in the following lemma. It is related to \cite[Eq. (43)]{BehDifZin12}.

\begin{lemma} 
\label{dettodet}
Let $f_j(X)$ be formal Laurent series for $1 \le j \le n$, and define 
\begin{equation} 
f_j[X_1,\ldots,X_i]=\sum_{k=1}^i \frac{f_j(X_k)}{\prod_{1 \le l \le i, l \not= k} (X_k - X_l)}.
\end{equation} 
Then 
\begin{equation} 
\frac{\det_{1 \le i, j \le n} \left( f_j(X_i) \right) }{\prod_{1 \le i < j \le n} (X_j - X_i)} = \det_{1 \le i, j \le n} \left( f_j[X_1,\ldots,X_i] \right).
\end{equation} 
Moreover, we have 
\begin{equation} 
f_j[X_1,\ldots,X_i] = \sum_{k \in \mathbb{Z}} \left\{ \langle X^{i+k-1} \rangle f_j(X) \right\} \cdot h_k(X_1,\ldots,X_i),
\end{equation} 
where $\langle X^{i+k-1} \rangle f_j(X)$ denotes the coefficient of $X^{i+k-1}$ in $f_j(X)$.
\end{lemma}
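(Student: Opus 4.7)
The proof of Lemma~\ref{dettodet} splits naturally into its two assertions, each of which is amenable to elementary linear algebra together with a single partial-fraction identity.

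For the determinantal identity, my plan is to observe that the defining formula for $f_j[X_1,\ldots,X_i]$ already exhibits row $i$ of the matrix $B = (f_j[X_1,\ldots,X_i])_{i,j}$ as an explicit linear combination of the first $i$ rows of $A = (f_j(X_k))_{k,j}$. Collecting these coefficients gives a factorization $B = C A$, where $C$ is the lower-triangular matrix with entries $C_{i,k} = 1/\prod_{l \le i,\,l \neq k}(X_k - X_l)$ for $k \le i$. Its diagonal entries are $C_{i,i} = 1/\prod_{l < i}(X_i - X_l)$, so $\det C = 1/\prod_{1 \le l < i \le n}(X_i - X_l)$, and multiplicativity of the determinant yields the claim.

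For the second formula, I would use linearity in $f_j$ to reduce to the monomial case $f_j(X) = X^m$ with $m \in \mathbb{Z}$, where the claim becomes $X^m[X_1,\ldots,X_i] = h_{m-i+1}(X_1,\ldots,X_i)$. For $m \ge i-1$, this follows from the standard partial fraction decomposition
\begin{equation*}
\prod_{l=1}^{i}\frac{1}{1-X_l t} \;=\; \sum_{k=1}^{i} \frac{A_k}{1-X_k t}, \qquad A_k = \frac{X_k^{i-1}}{\prod_{l \neq k}(X_k - X_l)},
\end{equation*}
by extracting the coefficient of $t^{m-i+1}$ on both sides. For $0 \le m \le i-2$, both sides vanish: the right side by the convention $h_k = 0$ for $-i < k < 0$, and the left side because $X^m[X_1,\ldots,X_i]$ is the leading coefficient of the Lagrange interpolant of $X^m$ through $i$ nodes, and a polynomial of degree $m < i-1$ is its own interpolant.

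The remaining negative case $m \le -1$ is the main obstacle, since it requires the paper's extended definition of $h_k$ together with the reciprocity $h_k(X_1,\ldots,X_i) = (-1)^{i+1} X_1^{-1}\cdots X_i^{-1} h_{-k-i}(X_1^{-1},\ldots,X_i^{-1})$. The plan is to substitute $Y_l = X_l^{-1}$ and bookkeep the signs via $X_k - X_l = -(Y_k - Y_l)/(Y_k Y_l)$ to obtain
\begin{equation*}
X^m[X_1,\ldots,X_i] \;=\; (-1)^{i-1} X_1^{-1} \cdots X_i^{-1} \, Y^{-m+i-2}[Y_1,\ldots,Y_i].
\end{equation*}
Since $-m+i-2 \ge i-1$, the already-established first case identifies the inner divided difference with $h_{-m-1}(Y_1,\ldots,Y_i) = h_{-m-1}(X_1^{-1},\ldots,X_i^{-1})$, and a direct comparison with the reciprocity identity (noting that $(-1)^{i-1} = (-1)^{i+1}$) shows that the result is exactly $h_{m-i+1}(X_1,\ldots,X_i)$, completing the proof.
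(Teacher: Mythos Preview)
Your proof is correct. The first assertion is handled identically to the paper: both write the matrix $(f_j[X_1,\ldots,X_i])$ as a lower-triangular matrix times $(f_j(X_i))$ and take determinants; your $C$ is exactly the paper's $T_n$.

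For the second assertion the underlying mechanism is the same (partial fractions / the Lagrange identity), but the organization differs. The paper treats $m \ge 0$ and $m < 0$ uniformly by multiplying by $t^m$ and summing, reducing both cases to the identity $\sum_{k}\prod_{l\ne k}\frac{t-X_l}{X_k-X_l}=1$; for $m<0$ it expands $\prod_l(1-tX_l)^{-1}$ as a series in $t^{-1}$. You instead split into three ranges and, for $m\le -1$, apply the substitution $Y_l=X_l^{-1}$ to reduce to the already-established case $-m+i-2\ge i-1$, then invoke the reciprocity $h_k(X_1,\ldots,X_i)=(-1)^{i+1}X_1^{-1}\cdots X_i^{-1}h_{-k-i}(X_1^{-1},\ldots,X_i^{-1})$ to identify the answer. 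Your route has the virtue of making explicit use of that reciprocity (which the paper states but does not otherwise invoke in this proof), and the Lagrange-interpolation reading of the middle range $0\le m\le i-2$ is a nice touch; the paper's route is slightly more uniform in that both sign cases are dispatched by the same generating-function manipulation.
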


\begin{proof} 
Let $T_n$ denote the $n \times n$ lower triangular matrix defined as follows.
$$
(T_n)_{i,j} = \begin{cases} \prod\limits_{1 \le l \le i, l \not=j} (X_j-X_l)^{-1}, & i \ge j \\
                                                                       \hspace{2cm} 0,                            & i < j \end{cases}
$$
The first assertion follows from multiplying $\left( f_j(X_i) \right)_{1 \le i,j \le n}$ from the left with $T_n$ and the multiplicativity of the determinant.

By linearity, it suffices to show the second formula for monomials, that is, we need to show 
\begin{equation}
\label{mon}
\sum_{k=1}^i \frac{X_k^m}{\prod_{1 \le l \le i, l \not= k} (X_k - X_l)} =  h_{m-i+1}(X_1,\ldots,X_i).
\end{equation} 
We first consider the case $m \ge 0$. We multiply the identity with $t^m$, sum over all $m \ge 0$, and obtain 
$$
\sum_{k=1}^i (1- t X_k)^{-1} \prod_{1 \le l \le i, l \not= k} (X_k - X_l)^{-1} = t^{i-1} \prod_{l=1}^{i} (1 
-  t X_l)^{-1}. 
$$
This is equivalent to the original identity. We divide by the right-hand side, replace $t$ by $t^{-1}$ and arrive at 
$$
\sum_{k=1}^{i} \prod_{1 \le l \le i, l \not=k} \frac{t- X_l}{X_k-X_l} = 1.
$$
The left-hand side is obviously a polynomial in $t$ of degree no greater than $i-1$ and it evaluates to $1$ for 
$t=X_1,\ldots,X_i$. Thus the polynomial is $1$. As for $m<0$, we also multiply \eqref{mon} with $t^m$ and sum over all $m<0$. Noting that 
$$
\sum_{m<0} h_{m-i+1}(X_1,\ldots,X_i) t^m = - \prod_{l=1}^i (1-t X_l)^{-1} 
$$
(expand the right-hand side as a formal power series in $t^{-1}$), the result follows in the same way. 
\end{proof}


In our case, we need to consider 
\begin{multline*} 
f_j(X)= (u X )^{j-1} (u X + w) + 
  (- v X^{-1}-w)^{j-1} v X^{-1} \\ = u^j X^j + u^{j-1} w X^{j-1} +  \sum_{k \ge 0} \binom{j-1}{k} (-1)^{j-1} v ^{k+1} X^{-k-1} w^{j-1-k}
  \end{multline*} 
(see \eqref{lr} and set $t=-w$) and therefore 
$$  
\langle X^l \rangle f_j(X) = [l=j] u^j + [l=j-1]  u^{j-1} w + \binom{j-1}{-l-1} (-1)^{j-1} v^{-l} w^{j+l}, 
$$
where we use the Iverson bracket, i.e., $[\text{statement}]=1$ if the statement is true, and  $[\text{statement}]=0$ otherwise.
Therefore, the entry $(i,j)$ of the matrix underlying the determinant we can derive using Lemma~\ref{dettodet} is 
\begin{multline*} 
\sum_{k \in \mathbb{Z}} \left([k=j-i+1] u^j + [k=j-i]  u^{j-1} w + \binom{j-1}{-k-i} (-1)^{j-1} v^{-k-i+1} w^{j+k+i-1}\right) \cdot h_k(X_1,\ldots,X_i) \\
=u^j h_{j-i+1}(X_1,\ldots,X_i) + u^{j-1} w h_{j-i}(X_1,\ldots,X_i) 
+ \sum_{l \ge 1} \binom{j-1}{l-1} (-1)^{j-1} v^{l} w^{j-l} h_{-l-i+1}(X_1,\ldots,X_i).
\end{multline*} 

\subsection{Further manipulations: matrix multiplication}
\label{further} 
We define two matrices 
\begin{equation}
A_n = \left( u^j h_{j-i+1}(X_1,\ldots,X_i) + u^{j-1} w h_{j-i}(X_1,\ldots,X_i) \right)_{1 \le i, j \le n},  
\end{equation}
and 
\begin{equation}
B_n = \left( \sum_{l \ge 1} \binom{j-1}{l-1} (-1)^{j-1} v^{l} w^{j-l} h_{-l-i+1}(X_1,\ldots,X_i) \right)_{1 \le i, j \le n}, 
\end{equation}
so that the determinant can also be written as $\det \left( A_n + B_n \right)$. 
We will compute the inverse of $B_n$ because we aim at considering 
$$\det \left( A_n + B_n \right) = \det \left( B_n \right) \det \left( B_n^{-1} A_n + I_n \right),$$
where $I_n$ denotes the identity matrix. In the end, we will interpret $\det \left( B_n^{-1} A_n + I_n \right)$ combinatorially. 
The matrix $B_n$ can obviously be decomposed as follows.
 $$
B_n =
\left( h_{-j-i+1}(X_1,\ldots,X_i) \right)_{1 \le i, j \le n} \cdot \left( \binom{j-1}{i-1} (-1)^{j-1} v^{i} w^{j-i} \right)_{1 \le i, j \le n}
$$
Note that it is not hard to see that $\det \left( B_n \right) = v^{\binom{n+1}{2}} \prod_{1 \le i \le n} X_i^{-n}$, which cancels the prefactor in \eqref{lr}: The first factor of $B_n$ can be brought into triangular form by elementary column operations using \eqref{hrec}, while the second factor is already in triangular form.

\medskip
Next we calculate the inverse of $B_n$. Using Vandermonde summation, it is easy to check that 
$$
\left[ \left( \binom{j-1}{i-1} (-1)^{j-1} v^{i} w^{j-i} \right)_{1 \le i, j \le n} \right]^{-1} = \left( \binom{j-1}{i-1} (-1)^{j-1} v^{-j} w^{j-i} \right)_{1 \le i, j \le n}.
$$
We also need the following
\begin{equation}
\label{hh} 
 \left( h_{j-i}(X_j,X_{j+1},\ldots,X_n) \right)_{1 \le i, j \le n} \cdot \left( h_{1-i-j}(X_1,\ldots,X_i) \right)_{1 \le i, j \le n}  \\
= \left( h_{1-i-j}(X_1,\ldots,X_n) \right)_{1 \le i, j \le n}, 
\end{equation}
which can be shown by setting $a=1-j-i$ and $b=n-i+1$ in the next Lemma.

\begin{lemma}
\label{lem: sum dec of h}
Let $a,b,n$ be integers with $1 \leq b \leq n$, then
\[
h_a(X_1,\ldots,X_n) = \sum_{l=0}^{b-1}h_{a-l}(X_{b-l},\ldots,X_{n}) h_{l}(X_1,\ldots,X_{b-l}).
\]
\end{lemma}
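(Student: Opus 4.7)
The plan is to proceed by induction on $b$. The base case $b=1$ is immediate: the only summand is $h_a(X_1,\ldots,X_n)\cdot h_0(X_1)=h_a(X_1,\ldots,X_n)$.

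A preliminary point: the peeling identity \eqref{hrec} holds for all integers $k$, not just non-negative ones. This is a short direct check from the definition of $h_k$ together with the given reciprocity, and it is essential because the lemma will be applied for negative values of $a$ (for instance, in the proof of \eqref{hh} where $a=1-i-j$).

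For the inductive step, write $R_b$ for the right-hand side. First, I would isolate the $l=0$ term, which equals $h_a(X_b,\ldots,X_n)$, and reindex the remaining sum by $l\mapsto l+1$. Then apply \eqref{hrec} with $i=b-l-1$ to expand
$h_{l+1}(X_1,\ldots,X_{b-l-1}) = h_{l+1}(X_1,\ldots,X_{b-l-2}) + X_{b-l-1}\,h_l(X_1,\ldots,X_{b-l-1})$,
splitting the remainder into two sums $T_1$ and $T_2$. In $T_2$, the factor $X_{b-l-1}\,h_{a-l-1}(X_{b-l-1},\ldots,X_n)$ can be rewritten, again by \eqref{hrec}, as the difference $h_{a-l}(X_{b-l-1},\ldots,X_n)-h_{a-l}(X_{b-l},\ldots,X_n)$. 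Summing the first half of this difference over $l$ gives exactly $R_{b-1}$, which by the induction hypothesis equals $h_a(X_1,\ldots,X_n)$. After reindexing $T_1$ by $l\mapsto l-1$, the remaining portion of $T_1$ and the second half of $T_2$ are over the same range of shifted indices and telescope, leaving only two boundary contributions: one at $l=b-1$ containing $h_{b-1}$ of an empty variable list (which vanishes for $b\ge 2$), and one at $l=0$ equal to $-h_a(X_b,\ldots,X_n)$. The latter cancels the isolated $h_a(X_b,\ldots,X_n)$ peeled off at the start, so $R_b=h_a(X_1,\ldots,X_n)$, closing the induction.

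The only real obstacle is careful bookkeeping of the index shifts after the two applications of \eqref{hrec}, and checking that exactly the right boundary terms survive. The argument itself is purely algebraic and goes through uniformly for all integer $a$ thanks to the validity of \eqref{hrec} throughout $\mathbb{Z}$.
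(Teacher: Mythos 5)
Your proof is correct and is essentially the paper's own argument: induction on $b$ with one application of \eqref{hrec} to each of the two factors followed by index shifts, the only (cosmetic) difference being that you rewrite $R_b$ as $R_{b-1}$ while the paper transforms $R_b$ into $R_{b+1}$. Your preliminary remark that \eqref{hrec} holds for all integers $k$ (in the ranges needed, where at least two variables remain) is a worthwhile point that the paper leaves implicit.
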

\begin{proof}
The proof is by induction with respect to $b$. The assertion is trivial for $b=1$ and we perform the step from $b$ to $b+1$.
Using \eqref{hrec} first for $h_{a-l}$, and then for $h_l$ in reverse direction, we obtain
\begin{multline*}
h_a(X_1,\ldots,X_n) = \sum_{l=0}^{b-1}h_{a-l}(X_{b-l},\ldots,X_{n}) h_{l}(X_1,\ldots,X_{b-l}) \\
 = \sum_{l=0}^{b-1}  \Big( h_{a-l}(X_{b+1-l},\ldots,X_{n}) + X_{b-l} h_{a-l-1}(X_{b-l},\ldots,X_{n}) \Big) h_{l}(X_1,\ldots,X_{b-l}) \\ 
 = \sum_{l=0}^{b} h_{a-l}(X_{b+1-l},\ldots,X_{n}) h_{l}(X_1,\ldots,X_{b-l}) + 
 \sum_{l=-1}^{b-1} X_{b-l} h_{a-l-1}(X_{b-l},\ldots,X_{n}) h_{l}(X_1,\ldots,X_{b-l})  \\
 = \sum_{l=0}^{b} h_{a-l}(X_{b+1-l},\ldots,X_{n}) \Big(  h_{l}(X_1,\ldots,X_{b-l}) + X_{b+1-l} h_{l-1}(X_1,\ldots,X_{b+1-l}) \Big)\\
  = \sum_{l=0}^{b} h_{a-l}(X_{b+1-l},\ldots,X_{n}) h_{l}(X_1,\ldots,X_{b+1-l}). \qedhere
\end{multline*}
\end{proof}

Moreover, we have  
\begin{equation} 
\label{hinverse}
\left[ \left( h_{1-i-j}(X_1,\ldots,X_n) \right)_{1 \le i, j \le n} \right]^{-1} = \left( (-1)^{i+j} e_{i+j-1}(X_1,\ldots,X_n) \right)_{1 \le i,j \le n}, 
\end{equation}
as 
\begin{multline*} 
\sum_{k=1}^n (-1)^{j+k} h_{1-i-k}(X_1,\ldots,X_n) e_{k+j-1}(X_1,\ldots,X_n)  \\ = 
\sum_{k \ge 1} (-1)^{j+k+n-1} X_1^{-1} \cdots X_n^{-1} h_{i+k-1-n}(X_1^{-1},\ldots,X_n^{-1}) e_{k+j-1}(X_1,\ldots,X_n) \\
= \sum_{k \in \mathbb{Z}} (-1)^{n} X_1^{-1} \cdots X_n^{-1} \langle t^{1-i-k} \rangle \left[ \prod_{l=1}^n (t- X_l^{-1})^{-1} \right] 
\langle t^{k+j-1} \rangle \left[ \prod_{l=1}^n (1- t X_l) \right] \\
= (-1)^{n} X_1^{-1} \cdots X_n^{-1} \langle t^{j-i} \rangle \left[ \prod_{l=1}^n (t- X_l^{-1})^{-1} (1- t X_l) \right] 
= (-1)^n X_1^{-1} \cdots X_n^{-1} \langle t^{j-i} \rangle  \left[ (-1)^n X_1 \cdots X_n  \right] = [i=j].
\end{multline*} 
The right-hand side of \eqref{hinverse} can be decomposed as follows.
\begin{multline*} 
\left( (-1)^{i+j} e_{i+j-1}(X_1,\ldots,X_n) \right)_{1 \le i,j \le n} \\ = \left( (-1)^{i} e_{i-1}(X_1,\ldots,X_{n-j}) X_{n-j+1} \right)_{1 \le i,j \le n}
\cdot \left( (-1)^{j} e_{j-1}(X_{n-i+2},\ldots,X_n) \right)_{1 \le i,j \le n}
\end{multline*} 

Combining all this, we obtain for $B_n^{-1}$
\begin{multline*} 
\left( \binom{j-1}{i-1} (-1)^{j-1} v^{-j} w^{j-i} \right)_{1 \le i, j \le n} \cdot \left[ \left( h_{1-i-j}(X_1,\ldots,X_n) \right)_{1 \le i, j \le n} \right]^{-1} \cdot \left( h_{j-i}(X_j,X_{j+1},\ldots,X_n) \right)_{1 \le i, j \le n} \\
=  \left( \binom{j-1}{i-1} (-1)^{j-1} v^{-j} w^{j-i} \right)_{1 \le i, j \le n} \cdot  \left( (-1)^{i+j} e_{i+j-1}(X_1,\ldots,X_n) \right)_{1 \le i,j \le n} \cdot 
\left( h_{j-i}(X_j,X_{j+1},\ldots,X_n) \right)_{1 \le i, j \le n}  \\
= \left( \binom{j-1}{i-1} (-1)^{j-1} v^{-j} w^{j-i} \right)_{1 \le i, j \le n} \cdot \left( (-1)^{i} e_{i-1}(X_1,\ldots,X_{n-j}) X_{n-j+1} \right)_{1 \le i,j \le n} \\
\cdot \left( (-1)^{j} e_{j-1}(X_{n-i+2},\ldots,X_n) \right)_{1 \le i,j \le n} \cdot \left( h_{j-i}(X_j,X_{j+1},\ldots,X_n) \right)_{1 \le i, j \le n} \\
=  \left( \binom{j-1}{i-1} v^{-j} w^{j-i} \right)_{1 \le i, j \le n} \cdot \left(  e_{i-1}(X_1,\ldots,X_{n-j}) X_{n-j+1} \right)_{1 \le i,j \le n} \\
\cdot \left( (-1)^{j-1} e_{j-1}(X_{n-i+2},\ldots,X_n) \right)_{1 \le i,j \le n} \cdot \left( h_{j-i}(X_j,X_{j+1},\ldots,X_n) \right)_{1 \le i, j \le n}.
\end{multline*} 

Now multiply $A_n$ 
from the left with the product of all these matrices. First we see that
\begin{multline*}
 \left( h_{j-i}(X_j,X_{j+1},\ldots,X_n) \right)_{1 \le i, j \le n} \cdot ( u^j h_{j-i+1}(X_1,\ldots,X_i) + u^{j-1} w h_{j-i}(X_1,\ldots,X_i) )_{1 \le i, j \le n} \\ 
 = ( u^j h_{j-i+1}(X_1,\ldots,X_n) + u^{j-1} w h_{j-i}(X_1,\ldots,X_n) )_{1 \le i, j \le n}, 
\end{multline*} 
which follows (in a manner very similar to \eqref{hh}) from 
Lemma~\ref{lem: sum dec of h} for $a=j-i+1$ and $b=j+1$ or $a=j-i$ and $b=j$ respectively.
}
The entry $(i,j)$ from the matrix   
\begin{multline*} 
\left( (-1)^{j-1} e_{j-1}(X_{n-i+2},\ldots,X_n) \right)_{1 \le i,j \le n} \cdot ( u^j h_{j-i+1}(X_1,\ldots,X_n) + u^{j-1} w h_{j-i}(X_1,\ldots,X_n) )_{1 \le i, j \le n}
\end{multline*} 
simplifies to 
\begin{multline*} 
\sum_{k=1}^n (-1)^{k-1} e_{k-1}(X_{n-i+2},\ldots,X_n) \left[ u^j h_{j-k+1}(X_1,\ldots,X_n) + u^{j-1} w h_{j-k}(X_1,\ldots,X_n) \right] \\
= u^j \sum_{k \ge 0} \langle t^k \rangle \left[ \prod_{l=n-i+2}^{n} (1-t X_l) \right]  
\langle t^{j-k} \rangle \left[ \prod_{l=1}^{n} (1-t X_l)^{-1} \right] \\
+  w u^{j-1} \sum_{k \ge 0} \langle t^k \rangle \left[ \prod_{l=n-i+2}^{n} (1-t X_l) \right]  
\langle t^{j-k-1} \rangle \left[ \prod_{l=1}^{n} (1-t X_l)^{-1} \right] \\
= u^j  \langle t^j \rangle\left[ \prod_{l=1}^{n-i+1} (1-t X_l)^{-1} \right] 
+  w u^{j-1}  \langle t^{j-1} \rangle \left[ \prod_{l=1}^{n-i+1} (1-t X_l)^{-1} \right] \\
= u^j h_j(X_1,\ldots,X_{n-i+1}) + w u^{j-1} h_{j-1}(X_1,\ldots,X_{n-i+1}) 
\end{multline*} 
Then, the entry $(i,j)$ of 
$$
\left(  e_{i-1}(X_1,\ldots,X_{n-j}) X_{n-j+1} \right)_{1 \le i,j \le n} \cdot ( u^j h_j(X_1,\ldots,X_{n-i+1}) + w u^{j-1} h_{j-1}(X_1,\ldots,X_{n-i+1}) )_{1 \le i, j \le n} 
$$
is equal to 
\begin{multline*}
 \sum_{l=1}^n e_{i-1}(X_1,\ldots,X_{n-l}) X_{n-l+1}  \left[ u^j  h_{j} (X_1,\ldots,X_{n-l+1}) 
+ w u^{j-1}  h_{j-1} (X_1,\ldots,X_{n-l+1}) \right].
\end{multline*} 
Finally, the entry $(i,j)$ of $B_n^{-1} A_n$, i.e.,
\begin{multline*} 
\left( \binom{j-1}{i-1} v^{-j} w^{j-i} \right)_{1 \le i, j \le n} \\
\cdot \left(  \sum_{l=1}^n e_{i-1}(X_1,\ldots,X_{n-l}) X_{n-l+1}  \left[ u^j  h_{j} (X_1,\ldots,X_{n-l+1}) 
+ w u^{j-1}  h_{j-1} (X_1,\ldots,X_{n-l+1})  \right] \right)_{1 \le i,j \le n}
\end{multline*}  
is equal to 
\begin{multline} 
\label{pathlast} 
\sum_{l=1}^n u^{j-1} [u h_{j} (X_1,\ldots,X_{n-l+1}) +
w h_{j-1} (X_1,\ldots,X_{n-l+1})] \\
\times \sum_{k=i-1}^n \binom{k-1}{i-1} v^{-k} w^{k-i} e_{k-1}(X_1,\ldots,X_{n-l}) X_{n-l+1}.
\end{multline} 

\subsection{Combinatorial interpretation of the determinant using the Lindstr\"om-Gessel-Viennot lemma}
\label{GVlast} 
We interpret \eqref{pathlast} as the generating function of weighted lattice paths that start in $(-i,i-2)$ and end in $(j,0)$ as follows. In fact, we will now establish the connection to the extended DPP-paths as defined in Section~\ref{dpp}.
\begin{itemize} 
\item Recall that strictly left of the $y$-axis, we have unit up-steps and unit right-steps in DPP-paths. The term 
\begin{equation}
\label{interpret}  
\sum_{k=i-1}^n \binom{k-1}{i-1} v^{-k} w^{k-i} e_{k-1}(X_1,\ldots,X_{n-l}) X_{n-l+1}
\end{equation} 
takes into account for precisely this, namely for the portion of the path from $(-i,i-2)$ to $(0,n-l-1)$, where the step that reaches $(0,n-l-1)$ is a right-step. By definition of DPP-paths the $m$-th step contributes $X_m v^{-1}$ if it is a right-step and either $1$ or $X_m v^{-1} w$ if it is an up-step. Since the last step is prescribed and has weight $X_{n-l+1} v^{-1}$, we need to choose a path from $(-i,i-2)$ to $(-1,n-l-1)$, which has in total $n-l$ steps of which $i-1$ have to be right steps. We call the $m$-th step \emph{special} if $X_m$ appears as a factor in its weight. We choose $k-1$ steps to be special, and note that this is reflected through $e_{k-1}(X_1,\ldots,X_{n-l})$ in \eqref{interpret}. Among the $k-1$ special steps we choose the positions of the $i-1$ right-steps (all right-steps are special): this is reflected in $\binom{k-1}{i-1}$ in \eqref{interpret}. The $i-1$ right-steps have each an additional multiplicative weight of $v^{-1}$ and each of the $k-i$ special up-steps  has an additional  multiplicative weight of $v^{-1} w$.
\item Next we have to deal with the portion that goes from $(0,n-l-1)$ to $(j,-2)$. This will be obtained by interpreting the term
$$
u^{j-1} [u h_{j} (X_1,\ldots,X_{n-l+1}) +
w h_{j-1} (X_1,\ldots,X_{n-l+1})]
$$
for $1 \le l \le n$.
\begin{itemize} 
\item Observe that  
$$
u^{j} h_{j} (X_1,\ldots,X_{n-l+1})
$$
actually corresponds to the paths from $(0,n-l-1)$ to $(j,-1)$ with only down-steps and right-steps.  Right-steps at height $m-2$ over the $x$-axis contribute $u X_m$ to the weight. We add to such a path a down-step at the end to reach $(j,-2)$. 
\item On the other hand, 
$$
u^{j-1}  w h_{j-1} (X_1,\ldots,X_{n-l+1})
$$
is obviously the generating function for paths from $(0,n-l-1)$ to $(j-1,-1)$, with the same weight, except that we have to multiply $w$, overall. We add to such a path a diagonal-step $(1,-1)$ to reach $(j,-2)$, and interpret $w$ as the weight of the diagonal-step.
\end{itemize} 
In summary, one can also say that right of and on the $y$-axis, we consider paths from $(0,n-l-1)$ to $(j,-2)$, with down-steps and right-step above and on the line $y=-1$, as well as
down-steps and diagonal-steps $(1,-1)$ from ``level'' $-1$ to ``level'' $-2$. Each horizontal step at level $-1$ and higher gets a weight $u X_m$, with 
$m$ is obtained by adding $2$ to the level. A diagonal-step contribute $w$ to the weight. 
\item In the end, we need to multiply with $v^{\binom{n+1}{2}}$ (recall that the weight of a family of paths is the product of the weights of the single paths times $v^{\binom{n+1}{2}}$), which comes from $\det(B_n)$. 
\end{itemize} 

Now we use the Lindstr\"om-Gessel-Viennot lemma \cite{Lin73,GesVie85,GesVie89} to interpret the determinant
\begin{equation} 
\det_{1 \le i, j \le n} \left( [i=j] + \W((-i,i-2) \to (j,-2)) \right) 
\end{equation} 
as the generating function of extended $n$-DPP paths, where $\W((-i,i-2) \to (j,-2))$ is the generating function of lattice paths from 
$(-i,i-2)$ to $(j,-2)$ with the weights as given above. Indeed, we mimic how one can be led to Andrews' definition of descending plane partitions: we can write the determinant also as 
\begin{equation} 
\sum_{r=0}^{n} \sum_{1 \le u_1 < u_2 < \ldots < u_r \le n} \det_{1 \le i, j \le r} \left( \W((-u_i,u_i-2) \to (u_j,-2)) \right). 
\end{equation}
The Lindstr\"om-Gessel-Viennot lemma now asserts that $\det_{1 \le i, j \le r} \left( \W((-u_i,u_i-2) \to (u_j,-2)) \right)$ is the weighted count of 
non-intersecting lattice paths from $A'_{u_1},\ldots,A'_{u_r}$ to $E'_{u_1},\ldots,E'_{u_r}$. This concludes the proof of Theorem~\ref{main2}.

\section{Acknowledgment} 

Ilse Fischer thanks Matja{\v z} Konvalinka for useful discussions.

\bibliographystyle{alpha}

\end{document}